\pdfoutput=1
\documentclass[english]{shinyart} 
\usepackage[utf8]{inputenc}
\usepackage{csquotes}
\usepackage[english]{babel}
\usepackage[T1]{fontenc}
\usepackage{tikz}
\usepackage{pgfplots}
\pgfplotsset{compat=newest}
\pgfplotsset{plot coordinates/math parser=false}
\usetikzlibrary{plotmarks}
\newlength\figureheight
\newlength\figurewidth
\usepackage{grffile}
\usepackage{float}
\usepackage{enumitem}
\usepackage{comment}
\usepackage{mathtools}

\usepackage{shinybib}
\bibliography{nlpdhgm_accel.bib}

\usepackage{tikz,pgfplots}
\usepgfplotslibrary{colorbrewer}
\pgfplotsset{compat=newest}
\pgfplotsset{plot coordinates/math parser=false}

\theoremstyle{definition}
\newtheorem{algorithm}{Algorithm}
\numberwithin{algorithm}{section}
\newtheorem{assumption}{Assumption}
\numberwithin{assumption}{section}

\crefname{assumption}{Assumption}{Assumptions}

\captionsetup[figure]{font=small}

\newcommand{\term}{\emph}
\newcommand{\defeq}{:=}
\newcommand{\grad}{\nabla}
\newcommand{\inv}[1]{#1^{-1}}
\newcommand{\field}[1]{\mathbb{#1}}
\newcommand{\N}{\field{N}}
\newcommand{\R}{\field{R}}
\newcommand{\extR}{\overline \R}
\newcommand{\abs}[1]{|#1|}
\newcommand{\norm}[1]{\|#1\|}
\newcommand{\iprod}[2]{\langle #1,#2\rangle}
\newcommand{\subdiff}{\partial}
\newcommand{\weakto}{\mathrel{\rightharpoonup}}
\newcommand{\downto}{\searrow}
\newcommand{\freevar}{\,\boldsymbol\cdot\,}
\newcommand{\isect}\cap

\DeclareMathOperator{\Dom}{dom}
\DeclareMathOperator{\closure}{cl}
\DeclareMathOperator{\conv}{conv}

\newcommand{\B}{\vmathbb{B}}

\def\tilde{\widetilde}

\def\NL{\textup{NL}}
\def\LIN{\textup{L}}
\def\Ynl{Y_{\NL}}
\def\Ylin{Y_{\LIN}}

\def\Pnl{P_{\NL}}

\newcommand{\setto}{\rightrightarrows}

\def\extR{\overline \R}
\def\linear{\mathcal{L}}

\newcommand{\linearLArrow}[1][]{\linear_{\triangleleft\ifx&#1&\else,\,#1\fi}}
\newcommand{\linearLArrowSpecial}[1][]{\linear^{\star}_{\triangleleft\ifx&#1&\else,\,#1\fi}}

\def\realopt#1{\widehat #1}
\def\this#1{#1^i}
\def\nexxt#1{#1^{i+1}}
\def\overnext#1{\bar #1^{i+1}}
\def\overx#1#2#3{\llbracket#1,#2\rrbracket^{#3}}

\def\realoptu{{\realopt{u}}}

\def\realoptx{{\realopt{x}}}

\def\realopty{{\realopt{y}}}

\def\nextu{\nexxt{u}}
\def\nextx{\nexxt{x}}

\def\nexty{\nexxt{y}}

\def\thisu{\this{u}}
\def\thisx{\this{x}}

\def\thisy{\this{y}}

\def\overnextx{\overnext{x}}

\def\Tau{T}
\def\TauTest{\Phi}
\def\tauTest{\phi}
\def\SigmaTest{\Psi}
\def\sigmaTest{\psi}

\def\GammaLift#1{\Xi_{#1}}

\def\Space{U}

\newcommand{\Test}{Z}
\newcommand{\Precond}{M}
\newcommand{\Step}{W}

\DeclareFontFamily{U}{mathx}{\hyphenchar\font45}
\DeclareFontShape{U}{mathx}{m}{n}{<-> mathx10}{}
\DeclareSymbolFont{mathx}{U}{mathx}{m}{n}
\DeclareMathAccent{\widebar}{0}{mathx}{"73}

\newcommand{\Penalty}{\Delta}

\def\kgrad#1{\grad K(#1)}
\def\kgradconj#1{[\grad K(#1)]^*}

\def\realoptw{\realopt{w}}
\def\neighu{\mathcal{U}}

\def\neighx{\mathcal{X}}
\def\neighy{\mathcal{Y}}

\def\bar{\widebar}

\def\metricRhoY{\rho_y}
\newcommand{\localRhoX}[1][]{\if\relax\detokenize{#1}\relax r_x\else r_{x,#1}\fi}
\newcommand{\localRhoY}[1][]{\if\relax\detokenize{#1}\relax r_y\else r_{y,#1}\fi}

\def\thetitle{Acceleration and global convergence of a first-order primal--dual method for nonconvex problems}

\title{\thetitle}
\date{2018-08-07}
\author{%
    Christian Clason\thanks{Faculty of Mathematics, University Duisburg-Essen, 45117 Essen, Germany (\email{christian.clason@uni-due.de})}
    \and
    Stanislav Mazurenko\thanks{Department of Mathematical Sciences, University of Liverpool, United Kingdom (\email{stan.mazurenko@gmail.com})}
    \and
    Tuomo Valkonen\thanks{ModeMat, Escuela Politécnica Nacional, Quito, Ecuador; \emph{previously} Department of Mathematical Sciences, University of Liverpool, United Kingdom (\email{tuomo.valkonen@iki.fi})}
}

\AtBeginDocument{
    \hypersetup{
        pdftitle = {\thetitle},
        pdfauthor = {C.~Clason and S.~Mazurenko and T.~Valkonen}
    }
}

\begin{document}

\maketitle

\begin{abstract}
    The primal--dual hybrid gradient method (PDHGM, also known as the Chambolle--Pock method) has proved very successful for convex optimization problems involving linear operators arising in image processing and inverse problems.
    In this paper, we analyze an extension to nonconvex problems that arise if the operator is nonlinear. Based on the idea of testing, we derive new step length parameter conditions for the convergence in infinite-dimensional Hilbert spaces and provide acceleration rules for suitably (locally and/or partially) monotone problems. Importantly, we prove linear convergence rates as well as global convergence in certain cases.
    We demonstrate the efficacy of these step length rules for PDE-constrained optimization problems.
\end{abstract}

\section{Introduction}
\noindent
Many optimization problems can be represented as minimizing a sum of two terms of the form
\begin{equation}
    \tag{P}
    \label{eq:initial-problem}
	\min_x~ G(x)+F(K(x))
\end{equation}
for some (extended) real-valued functionals $F$ and $G$ and a (possibly nonlinear) operator $K$.
For instance, in inverse problems, $G$ will typically be a fidelity term, measuring fit to data, and $F \circ K$ is a regularization term introduced to avoid ill-posedness and promote desired features in the solution. In imaging problems in particular, quite often total variation type regularization is used, in which case $K$ is composed of differential operators \cite{chambolle97image,bachmayr2009iterative,bredies2009tgv}.
In optimal control, $K$ frequently denotes the solution operator to partial or ordinary differential equations as a function of the control input. In this case $G$ and $F$ stand for control- and state-dependent contributions to the cost function, respectively. The function $F$ might also account for state constraints \cite{tuomov-pdex2nlpdhgm}.

Since the above applications usually involve high and possibly infinite-dimensional spaces, first-order numerical methods can provide the best trade-off between precision and computation time. This, however, depends on the exact formulation of the problem and the specific algorithm used. 
Nonsmooth first-order methods roughly divide into two classes: ones based on explicit subgradients, and ones based on proximal maps as introduced in \cite{rockafellar1976monotone}. The former can exhibit very slow convergence, while taking a step in the latter is often tantamount to solving the original problem.
As both $G$ and $F$ are often convex, introducing a dual variable $y$ and the convex conjugate $F^*$ of $F$, we can rewrite \eqref{eq:initial-problem} as
\begin{equation}
    \tag{S}
    \label{eq:initial-problem-decoupled}
	\min_{x}\max_{y}~ G(x)+\iprod{K(x)}{y}-F^*(y).
\end{equation}
Now, if we can decouple the primal and dual variables, and, instead of the proximal map of $x \mapsto G(x) + F(K(x))$, individually and efficiently compute the proximal maps $(I+\tau \subdiff G)^{-1}$ and $(I+\sigma \subdiff F^*)^{-1}$, methods based on proximal steps can be highly efficient. Based on this idea, for linear $K$ a decoupling algorithm -- now commonly known as the Chambolle--Pock method -- was suggested in \cite{pock2009mumford,chambolle2010first}. In \cite{chambolle2010first,chambolle2014ergodic} the authors proved the $O(1/N)$ convergence of an ergodic duality gap to zero and provided an $O(1/N^2)$ acceleration scheme when either the primal or dual objective is strongly convex.
In \cite{esser2010general}, the method was classified as the Primal--Dual Hybrid Gradient method, Modified (PDHGM).

However, frequently in applications, $K$ is not linear, making \eqref{eq:initial-problem} nonconvex.
This situation is the focus in the present work.
Our starting point is the extension of the PDHGM to nonlinear $K$ suggested in \cite{tuomov-nlpdhgm,tuomov-pdex2nlpdhgm}, where the authors proved local weak convergence without a rate under a metric regularity assumption. The method, called the NL-PDHGM (for ``nonlinear PDHGM''), and its ADMM variants have successfully been applied to problems in magnetic resonance imaging and PDE-constrained optimization \cite{tuomov-pdex2nlpdhgm,tuomov-nlpdhgm,benning2015preconditioned}.
We state it in \cref{alg:NL-PDHGM}, also incorporating references to the step length rules of the present work.
\begin{algorithm}[NL-PDHGM]
	\label{alg:NL-PDHGM}
	Pick a starting point $(x^0,y^0)$. Select step length parameters $\tau_i,\sigma_i,\omega_i > 0$ according to a suitable rule from one of \cref{thm:weak-convergence-nlpdhgm,thm:acceleration-nlpdhgm,thm:linear-convergence-nlpdhgm}.
        The iterate
	\begin{align*}
		\nextx & := (I + \tau_i \subdiff G)^{-1}(\thisx - \tau_i\kgradconj{\thisx}\thisy),\\
		\overnextx & :=  \nextx+\omega_i(\nextx-\thisx),\\
		\nexty & :=  (I + \sigma_{i+1} \subdiff F^*)^{-1}(\thisy + \sigma_{i+1} K(\overnextx)).
	\end{align*}
\end{algorithm}

Besides nonconvex ADMM \cite{benning2015preconditioned,wang2018global} (which is a closely related algorithm), first-order alternatives to the NL-PDHGM include iPiano \cite{ochs2014ipiano}, iPalm \cite{pock2017ipalm}, and an extension of the PDHGM to semiconvex functions \cite{mollenhoff2014primal}. The former two are inertial variants of forward--backward splitting, with iPalm further splitting the proximal step into two sub-blocks. We stress that none of these can be applied directly to \eqref{eq:initial-problem} if $F$ is nonsmooth \emph{and} $K$ is nonlinear, which is the focus of this work.
Another advantage of the approach based on the saddle point formulation \eqref{eq:initial-problem-decoupled} which moves all nonconvexity to $K$ is the following. Consider
\begin{equation}
	\label{eq:motivating-problem}
	\min_x \frac{1}{2}\norm{T(x)-z}^2 + F_0(K_0x),
\end{equation}
where $K$ is linear and $T$ nonlinear. Such problems arise, e.g., from total variation regularized nonlinear inverse problems, in which case $K_0=\grad$ and $T$ is a nonlinear forward operator \cite{tuomov-nlpdhgm}. As the function $F_0$ is typically nonsmooth (e.g., $F_0=\norm{\freevar}$ for total variation regularization), to apply a simple forward--backward scheme to this problem one would have to compute the proximal map of $F_0 \circ K_0$, which is seldom feasible. On the other hand, even if $T$ were linear, solving the dual problem instead as in \cite{beck2009fast} will not work either unless $T$ is unitary. However, we can rewrite \eqref{eq:motivating-problem} in the form \eqref{eq:initial-problem-decoupled} with $y=(y_1, y_2)$, $G \equiv 0$, $K(x) \defeq (K_0 x, T(x)-z)$, and $F^*(y) \defeq F_0^*(y_1) + \frac{1}{2}\norm{y_2}^2$. Now we only need to be able to compute $K$, $\grad K$, and the proximal map of $F_0^*$, all of which are typically easy.
Observe also how $F^*$ is strongly convex on the subspace corresponding to the nonlinear part of $K$. This will be useful for estimating convergence rates.

In \cite{tuomov-pdex2nlpdhgm}, based on small modifications to our original analysis in \cite{tuomov-nlpdhgm}, we showed that the acceleration scheme from \cite{chambolle2010first} for strongly convex problems can also be used with \cref{alg:NL-PDHGM} and nonlinear $K$ \emph{provided we stop the acceleration at some iteration}. Hence, no convergence rates could be obtained. \emph{In the present paper, based on a completely new and simplified analysis, we provide such rates and show that the acceleration does not have to be stopped}. 
To the best of our knowledge, this is the first work to prove convergence rates for a primal–dual method for nonsmooth  saddle point problems with nonlinear operators.
Our new analysis of the NL-PDHGM is based on the ``testing'' framework introduced in \cite{tuomov-proxtest} for preconditioned proximal point methods. In particular, we relax the metric regularity required in \cite{tuomov-nlpdhgm} to mere monotonicity \emph{at} a solution together with a three-point growth condition on $K$ around this solution. Both are essentially ``nonsmooth'' formulations of standard second-order growth conditions. 
We prove weak convergence to a critical point as well as $O(1/N^2)$ convergence (which is even global in some situations) with an acceleration rule if $\subdiff G$ or $\kgradconj{x}y$ is strongly monotone \emph{at} a primal critical point $\realoptx$.
If $\subdiff F^*$ is also strongly monotone at a dual critical point $\realopty$, we present step length rules that lead to linear convergence.
We emphasize that \emph{all the time we allow $K$ to be nonlinear, and through this the problem \eqref{eq:initial-problem} to be globally nonconvex.}  In addition, our local monotonicity assumptions are comparable nonsmooth counterparts to standard $C^2$ and positive Hessian assumptions in smooth nonconvex optimization. 

\bigskip

This work is organized as follows. We summarize the ``testing'' framework introduced in \cite{tuomov-proxtest} for preconditioned proximal point methods in \cref{sec:problem}.
We state our main results in \cref{sec:nlpdhgm-analysis}. Since block-coordinate methods have been receiving more and more attention lately --  including in the primal--dual algorithm designed in \cite{tuomov-blockcp} based on the same testing framework -- the main technical derivations of \cref{sec:testing-estimates} are implemented in a generalized operator form.
Once we have obtained these generic estimates, we devote \cref{sec:scalar} to scalar step length parameters and formulate our main convergence results. These amount to basically standard step length rules for the PDHGM combined with bounds on the initial step lengths.
Finally, in \cref{sec:examples}, we illustrate our theoretical results with numerical evidence. We study parameter identification with $L^1$ fitting and optimal control with state constraints, where the nonlinear operator $K$ involves the mapping from a potential term in an elliptic partial differential equation to the corresponding solution.

\section{Problem formulation}
\label{sec:problem}

Throughout this paper, we write $\linear(X; Y)$ for the space of bounded linear operators between Hilbert spaces $X$ and $Y$. We write $I$ for the identity operator, $\iprod{x}{x'}$ for the inner product, and $\B(x,r)$ for the closed unit ball of the radius $r$ at $x$ in the corresponding space. We set $\iprod{x}{x'}_T\defeq\iprod{Tx}{x'}$ and $\norm{x}_T\defeq\sqrt{\iprod{x}{x}_T}$. For $T,S\in\linear(X; Y)$, the inequality $T\ge S$ means $T-S$ is positive semidefinite. Finally, $\overx{x_1}{x_2}{\alpha} \defeq (1-\alpha)x_1 + \alpha x_2$; in particular, $\overnextx \defeq \overx{\nextx}{\thisx}{-\omega_i}$ in \cref{alg:NL-PDHGM}.

We generally assume $G: X \to \extR$ and $F^* \to \extR$ to be convex, proper, and lower semicontinuous, so that their subgradients $\subdiff G$ and $\subdiff F^*$ are well-defined maximally monotone operators \cite[Theorem 20.25]{bauschke2017convex}. Under a constraint qualification, e.g., when $K$ is $C^1$ and either the null space of $\kgradconj{x}$ is trivial or $\Dom F=X$ \cite[Example 10.8]{rockafellar-wets-va}, the critical point conditions for \eqref{eq:initial-problem} and \eqref{eq:initial-problem-decoupled} can be written as $0 \in H(\realoptu)$ for the 
set-valued operator $H: X \times Y \setto X \times Y$,
\begin{equation}
	\label{eq:h}
	H(u) \defeq
	\begin{pmatrix}
		\subdiff G(x) + \kgradconj{x} y \\
        \subdiff F^*(y) - K(x)
	\end{pmatrix},
\end{equation}
and $u=(x,y)\in X\times Y$.
Throughout the paper, $\realoptu\defeq(\realoptx,\realopty)$ always denotes an arbitrary root $H$, which can equivalently be characterized as $\realoptu \in \inv H(0)$.

To formulate \cref{alg:NL-PDHGM} in terms suitable for the testing framework of \cite{tuomov-proxtest}, we define the step length and testing operator
\[
    \Step_{i+1} \defeq
    \begin{pmatrix}
        \Tau_i & 0 \\
        0 & \Sigma_{i+1}
    \end{pmatrix}
    \quad\text{and}\quad
    \Test_{i+1} \defeq
    \begin{pmatrix}
        \TauTest_i & 0 \\
        0 & \SigmaTest_{i+1}
    \end{pmatrix},
\]
respectively, where $\Tau_i, \TauTest_i \in \linear(X; X)$ and $\Sigma_{i+1}, \SigmaTest_{i+1} \in \linear(Y; Y)$ are the primal step length and testing operators as well as their dual counterparts.

We also define the nonlinear preconditioner $\Precond_{i+1}$ and the partial linearization $\tilde H_{i+1}$ of $H$ by
\begin{align}
	\label{eq:precond}
	\Precond_{i+1} & \defeq
	\begin{pmatrix}
	I & -\Tau_i \kgradconj{\thisx} \\
	-\omega_i \Sigma_{i+1} \kgrad{\thisx} & I
	\end{pmatrix},
	\quad\text{and}
	\\
	\label{eq:tilde-h}
	\tilde H_{i+1}(u) & \defeq \begin{pmatrix}
	\subdiff G(x) + \kgradconj{\thisx} y \\
	\subdiff F^*(y)-K(\overx{x}{\thisx}{-\omega_i})-\kgrad{\thisx}(x-\overx{x}{\thisx}{-\omega_i})
	\end{pmatrix}.
\end{align}
Note that $\tilde H_{i+1}(u)$ simplifies to $H(u)$ for linear $K$.
Now \cref{alg:NL-PDHGM} (which coincides with the ``exact'' NL-PDHGM of \cite{tuomov-nlpdhgm}) can be written as
\begin{equation}
	\label{eq:ppext}
	\tag{PP}
	0 \in \Step_{i+1} \tilde H_{i+1}(\nextu)+\Precond_{i+1}(\nextu-\thisu).
\end{equation}
(For the ``linearized'' NL-PDHGM of \cite{tuomov-nlpdhgm}, we would replace $\overx{x}{\thisx}{-\omega}$ in \eqref{eq:tilde-h} by $\thisx$.)
Following \cite{tuomov-proxtest}, the step length operator $\Step_{i+1}$ in \eqref{eq:ppext} acts on $\tilde H_{i+1}$ rather than on the step $\nextu-\thisu$ so as to eventually allow zero-length steps on sub-blocks of variables as employed in \cite{tuomov-blockcp}. The testing operator $\Test_{i+1}$ does not yet appear in \eqref{eq:ppext} as it does not feature in the algorithm. We will shortly see that when we apply it to \eqref{eq:ppext}, the product $\Test_{i+1}\Precond_{i+1}$ will form a metric (in the differential-geometric sense) that encodes convergence rates.

Finally, we will also make use of the (possibly empty) subspace $\Ynl$ of $Y$ in which $K$ acts linearly, i.e.,
\[
	\Ylin \defeq \{ y \in Y \mid \text{the mapping } x \mapsto \iprod{y}{K(x)} \text{ is linear} \}
	\quad\text{and}\quad
	\Ynl \defeq \Ylin^\perp.
\]
(For examples of such subspaces, we refer to the introduction or, in particular, to \cite{tuomov-nlpdhgm}.) 
Furthermore, $\Pnl$ will denote the orthogonal projection to $\Ynl$. We also write $\B_\NL(\realopty, r) \defeq \{ y \in Y \mid \norm{y-\realopty}_{\Pnl} \le r\}$ for a closed cylinder in $Y$ of the radius $r$ with axis orthogonal to $\Ynl$.

\bigskip

Our goal in the rest of the paper is to analyze the convergence of \eqref{eq:ppext} for the choices \eqref{eq:h}--\eqref{eq:tilde-h}. We will base this analysis on the following abstract ``meta-theorem'', which formalizes common steps in convergence proofs of optimization methods. Its purpose is to reduce the proof of convergence to showing that the ``iteration gaps'' $\Penalty_{i+1}$ -- which encode differences in function values and whose specific form depend on the details of the algorithm -- are non-positive. 
The proof of the meta-theorem itself is relatively trivial, being based on telescoping and Pythagoras' (three-point) formula.
\begin{theorem}[{\cite[Theorem 2.1]{tuomov-proxtest}}]
    \label{thm:convergence-result-main-h}
	Suppose \eqref{eq:ppext} is solvable, and denote the iterates by $\{\thisu\}_{i \in \N}$. If $\Test_{i+1}\Precond_{i+1}$ is self-adjoint, and for some $\Penalty_{i+1} \in \R$ we have
	\begin{equation}
		\label{eq:convergence-fundamental-condition-iter-h}
		\tag{CI}
        \iprod{\tilde H_{i+1}(\nextu)}{\nextu-\realoptu}_{\Test_{i+1}}
		\ge 
		\frac{1}{2}\norm{\nextu-\realoptu}_{\Test_{i+2}\Precond_{i+2}-\Test_{i+1}\Precond_{i+1}}^2
		-\frac{1}{2}\norm{\nextu-\thisu}_{\Test_{i+1} \Precond_{i+1}}^2
		- \Penalty_{i+1}
	\end{equation}
    for all $i\le N-1$ and some $\realoptu \in \Space$, then
    \begin{equation}
        \label{eq:convergence-result-main-h}
        \tag{DI}
        \frac{1}{2}\norm{u^N-\realoptu}^2_{\Test_{N+1}\Precond_{N+1}}
        \le
        \frac{1}{2}\norm{u^0-\realoptu}^2_{\Test_{1}\Precond_{1}}
        +
        \sum_{i=0}^{N-1} \Penalty_{i+1}.
    \end{equation}
\end{theorem}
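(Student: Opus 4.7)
The plan is to derive a single-step descent inequality of the form
\[
\frac{1}{2}\norm{\nextu - \realoptu}_{\Test_{i+2}\Precond_{i+2}}^2 \le \frac{1}{2}\norm{\thisu - \realoptu}_{\Test_{i+1}\Precond_{i+1}}^2 + \Penalty_{i+1},
\]
which, when summed over $i=0,\ldots,N-1$, telescopes directly to \eqref{eq:convergence-result-main-h}. Two ingredients enter the derivation: the assumed inequality \eqref{eq:convergence-fundamental-condition-iter-h}, and the algorithm relation \eqref{eq:ppext}, which provides the only available link between $\tilde H_{i+1}(\nextu)$ and the step $\nextu - \thisu$. The self-adjointness of $\Test_{i+1}\Precond_{i+1}$ is needed precisely so that Pythagoras' three-point identity is valid with this operator as the metric.

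First I would apply the three-point identity to the self-adjoint operator $A \defeq \Test_{i+1}\Precond_{i+1}$ at the points $\nextu$, $\thisu$, $\realoptu$, obtaining
\[
\norm{\thisu - \realoptu}_A^2 = \norm{\nextu - \realoptu}_A^2 - 2\iprod{\nextu - \thisu}{\nextu - \realoptu}_A + \norm{\nextu - \thisu}_A^2.
\]
Next, I would test \eqref{eq:ppext} against $\nextu - \realoptu$ in the $\Test_{i+1}$-inner product; with the testing convention of \cite{tuomov-proxtest} (which pairs $\Test_{i+1}$ with $\Step_{i+1}$), this produces the identity
\[
\iprod{\tilde H_{i+1}(\nextu)}{\nextu - \realoptu}_{\Test_{i+1}} = -\iprod{\nextu - \thisu}{\nextu - \realoptu}_{\Test_{i+1}\Precond_{i+1}}.
\]
Substituting this into the Pythagoras identity and then inserting the result into \eqref{eq:convergence-fundamental-condition-iter-h}, the cross term and the $\norm{\nextu-\thisu}_A^2$ terms cancel simultaneously, leaving exactly the single-step descent inequality above.

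Finally, summing over $i=0,\ldots,N-1$ collapses into a telescoping series whose interior $\norm{\thisu - \realoptu}_{\Test_{i+1}\Precond_{i+1}}^2$ terms vanish, yielding \eqref{eq:convergence-result-main-h}. The main subtlety -- not really a genuine obstacle -- lies in the bookkeeping that pairs $\Test_{i+1}$ with $\Step_{i+1}$ in \eqref{eq:ppext} so that the $\Test_{i+1}\Precond_{i+1}$-metric appears on the right-hand side of the algorithm-based identity above, matching exactly the norm on both sides of the descent inequality. Notably, positivity of $\Test_{i+1}\Precond_{i+1}$ is nowhere used; it would only be needed \emph{a posteriori} to interpret \eqref{eq:convergence-result-main-h} as a genuine distance bound on $u^N$, which explains why the subsequent sections of the paper will build \eqref{eq:convergence-fundamental-condition-iter-h} carefully from monotonicity of $H$ together with step-length conditions ensuring such positivity.
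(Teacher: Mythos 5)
Your proof is correct and follows exactly the route the paper attributes to \cite[Theorem 2.1]{tuomov-proxtest}: test the inclusion \eqref{eq:ppext} against $\nextu-\realoptu$, apply the three-point (Pythagoras) identity in the self-adjoint metric $\Test_{i+1}\Precond_{i+1}$, insert \eqref{eq:convergence-fundamental-condition-iter-h}, and telescope the resulting single-step descent inequality. Your reading of the left-hand side of \eqref{eq:convergence-fundamental-condition-iter-h} as pairing $\Test_{i+1}$ with $\Step_{i+1}$ from \eqref{eq:ppext} is indeed the intended convention, so there is no gap.
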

Note that the theorem always holds for \emph{some} choice of the $\Penalty_{i+1} \in \R$. Our goal will be to choose the step length and testing operators $\Tau_i,\Sigma_{i+1},\TauTest_i$ and $\Sigma_{i+1}$ as well as the over-relaxation parameter $\omega_i$ such that $\Penalty_{i+1} \le 0$ and -- in order to obtain rates -- $\Test_{i+1}\Precond_{i+1}$ grows fast as $i \to \infty$. For example, if $\Penalty_{i+1}\leq 0$ and $\Test_{N+1}\Precond_{N+1}\ge\mu_NI$ with $\mu_N\rightarrow\infty$, then clearly $\norm{u^N-\realoptu}^2\rightarrow0$ at the rate $O(1/\mu_N)$.
In other contexts, $\Delta_{i+1}$ can be used to encode duality gaps \cite{tuomov-proxtest} or a penalty on convergence rates due to inexact, stochastic, updates of the local metric $\Test_{i+1}\Precond_{i+1}$ \cite{tuomov-blockcp}.

To motivate the following, consider the ``generalized descent inequality'' \eqref{eq:convergence-fundamental-condition-iter-h} in the simple case $\tilde H_{i+1}=H$. If we now had \emph{at $\realoptu$ for $\realoptw \defeq 0 \in H(\realoptu)$} the ``operator-relative strong monotonicity''
\[
    \iprod{H(\nextu)-\realoptw}{\nextu-\realoptu}_{\Test_{i+1}\Step_{i+1}} \ge \norm{\nextu-\realoptu}^2_{\Test_{i+1}\Gamma_{i+1}}
\] 
for some suitable operator $\Gamma_{i+1}$, then the local metrics should ideally be updated as $\Test_{i+1}\Precond_{i+2}=\Test_{i+1}(\Precond_{i+1}+2\Gamma_{i+1})$. Part of our work in the following sections is to find such a $\Gamma_{i+1}$ while maintaining self-adjointness and obtaining fast growth of the metrics.
However, our specific choices of $\tilde H_{i+1}$ and $\Precond_{i+1}$ switch parts of $H$ to take the gradient step $-[\grad K(\thisx)]^*\thisy$ in the primal update and an over-relaxed step in the dual update. We will approximately undo these changes using the term $-\frac{1}{2}\norm{\nextu-\thisu}_{\Test_{i+1} \Precond_{i+1}}^2$ in \eqref{eq:convergence-fundamental-condition-iter-h}. This component of \eqref{eq:convergence-fundamental-condition-iter-h} can also be related to the ``three-point hypomonotonicity'' $\iprod{\grad G(\thisx)-\grad G(\realoptx)}{\nextx-\realoptx} \ge -\frac{L}{4}\norm{\nextx-\thisx}^2$ that holds for convex $G$ with an $L$-Lipschitz gradient \cite{tuomov-proxtest}.

Before proceeding with deriving convergence rates using this approach, we show that we can still obtain weak convergence even if $\Test_{N+1}\Precond_{N+1}$ does not grow quickly.
\begin{proposition}[weak convergence]
    \label{prop:rateless-varying}
    Suppose the iterates of \eqref{eq:ppext} satisfy \eqref{eq:convergence-fundamental-condition-iter-h} for some $\realoptu \in \inv H(0)$ with $\Test_{i+1}\Precond_{i+1}$ self-adjoint and $\Penalty_{i+1} \le -\frac{\hat{\delta}}{2}\norm{\nextu-\thisu}_{\Test_{i+1}\Precond_{i+1}}^2$ for some $\hat{\delta} > 0$.
    Assume that
    \begin{enumerate}[label=(\roman*)]
        \item\label{item:weak-zm-opers} $\varepsilon I \le \Test_{i+1}\Precond_{i+1}$ for some $\varepsilon>0$;
        \item\label{item:weak-zm-h-limit}
        for some nonsingular $\Step \in \linear(\Space; \Space)$,
        \begin{equation*}
            \Test_{i+1}\Precond_{i+1}(\nextu-\thisu) \to 0,
            \quad
            u^{i_k} \weakto \bar{u}
            \implies
            0 \in \Step H(\bar{u});
        \end{equation*}
        \item\label{item:weak-zm-a-limit} there exists a constant $C$ such that $\norm{\Test_{i}\Precond_{i}}\le C^2$ for all $i$, and for any subsequence $u^{i_k} \weakto u$ there exists $A_\infty \in \linear(\Space; \Space)$ such that $\Test_{i_k+1}\Precond_{i_k+1} u \to A_\infty u$ strongly in $U$ for all $u \in U$.
    \end{enumerate}
    Then $\thisu \weakto \bar{u}$ weakly in $\Space$ for some $\bar{u} \in \inv H(0)$.
\end{proposition}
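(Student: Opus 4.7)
My plan is to adapt the classical Opial--Fej\'er argument to the time-varying local metric $A_{i+1} \defeq \Test_{i+1}\Precond_{i+1}$, exploiting the interplay between the strong null convergence of $A_{i+1}(u^{i+1}-u^i)$ that the hypotheses force and the demiclosedness property (ii). Throughout, I read \eqref{eq:convergence-fundamental-condition-iter-h} as holding at every $\realoptu \in \inv H(0)$, not just the one appearing in the statement, which is the substantive content of the monotonicity assumption behind the condition.

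First, applying \cref{thm:convergence-result-main-h} between consecutive indices (i.e., running it over a single step starting from $u^i$) yields the Fej\'er-type descent
\[
    \tfrac{1}{2}\norm{u^{i+1} - \realoptu}_{A_{i+1}}^2
    \le \tfrac{1}{2}\norm{u^i - \realoptu}_{A_{i+1}}^2
    - \tfrac{\hat\delta}{2}\norm{u^{i+1}-u^i}_{A_{i+1}}^2
\]
at every $\realoptu \in \inv H(0)$. Coercivity in (i) bounds $\{u^i\}$ in $\Space$, so weak cluster points exist. The summability $\sum_i \norm{u^{i+1}-u^i}_{A_{i+1}}^2 < \infty$, together with the inequality $\norm{Av}^2 \le \norm{A}\cdot\norm{v}_A^2$ valid for self-adjoint PSD $A$ and the operator-norm bound in (iii), implies $A_{i+1}(u^{i+1}-u^i) \to 0$ strongly. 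Assumption (ii) applied to any weakly convergent subsequence $u^{i_k} \weakto \bar u$ then gives $0 \in \Step H(\bar u)$, hence $\bar u \in \inv H(0)$ by nonsingularity of $\Step$.

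The main obstacle is upgrading this subsequential convergence to convergence of the entire sequence, because the metric $A_{i+1}$ itself varies with $i$. Fix two weak cluster points $\bar u_1, \bar u_2 \in \inv H(0)$ realized along $\{i_k\}$ and $\{j_k\}$, with corresponding strong-operator limits $A_\infty^{(1)}, A_\infty^{(2)}$ from (iii); both are self-adjoint and satisfy $A_\infty^{(\ell)} \ge \varepsilon I$ by passing to the limit in $\iprod{A_{i+1}v}{v} \ge \varepsilon \norm{v}^2$. Starting from the three-point identity
\[
    \norm{u^i - \bar u_1}_{A_{i+1}}^2 - \norm{u^i - \bar u_2}_{A_{i+1}}^2
    = 2\iprod{A_{i+1}(u^i - \bar u_2)}{\bar u_2 - \bar u_1}
    + \norm{\bar u_2 - \bar u_1}_{A_{i+1}}^2,
\]
I would pass to the limit along $\{j_k\}$. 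The key observation is $A_{j_k+1}(u^{j_k}-\bar u_2) \weakto 0$ weakly in $\Space$: for any $v \in \Space$,
\[
    \iprod{A_{j_k+1}(u^{j_k}-\bar u_2)}{v} = \iprod{u^{j_k}-\bar u_2}{A_{j_k+1}v}
\]
pairs a weak null sequence with a strongly convergent one by (iii) applied to $v$. Hence the right-hand side of the identity converges to $\norm{\bar u_1 - \bar u_2}_{A_\infty^{(2)}}^2$, and the left-hand side converges because the scalars $\phi_i(\bar u_\ell) \defeq \tfrac{1}{2}\norm{u^i-\bar u_\ell}_{A_{i+1}}^2$ are nonincreasing in $i$ by the first paragraph. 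This yields
\[
    \phi_\infty(\bar u_1) - \phi_\infty(\bar u_2) = \tfrac{1}{2}\norm{\bar u_1 - \bar u_2}_{A_\infty^{(2)}}^2.
\]
Repeating the computation along $\{i_k\}$ gives the opposite sign with $A_\infty^{(1)}$. Adding the two identities cancels the left-hand sides and leaves $\norm{\bar u_1 - \bar u_2}_{A_\infty^{(1)}}^2 + \norm{\bar u_1 - \bar u_2}_{A_\infty^{(2)}}^2 = 0$, whence coercivity of either limit operator forces $\bar u_1 = \bar u_2$. The bounded sequence $\{u^i\}$ thus admits a unique weak cluster point $\bar u \in \inv H(0)$, so $u^i \weakto \bar u$ weakly in $\Space$.
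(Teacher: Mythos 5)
Your overall skeleton (boundedness, summability of $\norm{\nextu-\thisu}_{\Test_{i+1}\Precond_{i+1}}^2$, hence $\Test_{i+1}\Precond_{i+1}(\nextu-\thisu)\to 0$, then demiclosedness via \ref{item:weak-zm-h-limit} to put weak cluster points into $\inv H(0)$) matches the paper, but the uniqueness step contains a genuine gap. You declare at the outset that you ``read \eqref{eq:convergence-fundamental-condition-iter-h} as holding at every $\realoptu\in\inv H(0)$'', and your two-cluster-point argument really needs this: to pass to the limit in the three-point identity you use that both $i\mapsto\norm{\thisu-\bar u_1}^2_{\Test_{i+1}\Precond_{i+1}}$ and $i\mapsto\norm{\thisu-\bar u_2}^2_{\Test_{i+1}\Precond_{i+1}}$ converge, which requires the Fej\'er descent -- and hence \eqref{eq:convergence-fundamental-condition-iter-h} with nonpositive $\Penalty_{i+1}$ -- at both cluster points. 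The proposition, however, only assumes \eqref{eq:convergence-fundamental-condition-iter-h} at a \emph{single} $\realoptu$, and this is not a cosmetic restriction: in the nonconvex setting \eqref{eq:convergence-fundamental-condition-iter-h} is derived from monotonicity and three-point conditions \emph{at} that particular critical point (\cref{ass:gf}, \cref{ass:k-nonlinear}), and other zeros of $H$ need not satisfy anything of the sort. Indeed, in \cref{thm:weak-convergence-nlpdhgm} the paper has to add the extra hypothesis \ref{item:weak-convergence-weakweak} precisely when it wants such conditions at other weak limit points, so your stronger reading cannot be taken for granted. (A smaller indexing slip: \eqref{eq:convergence-result-main-h} with $N=1$ started at $\thisu$ gives the metric $\Test_{i+2}\Precond_{i+2}$ on the left-hand side, not $\Test_{i+1}\Precond_{i+1}$; the monotone quantity is $i\mapsto\norm{\thisu-\realoptu}^2_{\Test_{i+1}\Precond_{i+1}}$, which is what you actually use later, so this is harmless.)

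The paper closes exactly this gap differently: it proves an improved Opial lemma (\cref{lemma:opial-improved}) that requires the weighted distances $\norm{\thisx-\hat x}_{A_i}$ to be nonincreasing for only \emph{one} point $\hat x$ of the (possibly nonconvex) limit set $\hat X=\inv H(0)$, compensating by a near-minimizer argument for $p(x)\defeq\liminf_i\norm{x-\thisx}_{A_i}$ over $\closure\conv\hat X$, together with the strong-operator convergence in \ref{item:weak-zm-a-limit}. If you want to salvage your route, you would either have to assume \eqref{eq:convergence-fundamental-condition-iter-h} (with the $\hat\delta$-bound on $\Penalty_{i+1}$) at every weak cluster point -- a strictly stronger hypothesis than the one stated -- or replace your classical two-point Opial argument by something in the spirit of \cref{lemma:opial-improved}.
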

\begin{proof}
    This is an improvement of \cite[Proposition 2.5]{tuomov-proxtest} that permits nonconstant $\Test_{i+1}\Precond_{i+1}$ and a nonconvex solution set.
    The proof is based on the corresponding improvement of Opial's lemma (\cref{lemma:opial-improved}) together with \cref{thm:convergence-result-main-h}.
    Using $\Penalty_{i+1} \le -\frac{\hat{\delta}}{2}\norm{\nextu-\thisu}_{\Test_{i+1}\Precond_{i+1}}^2$, \eqref{eq:convergence-result-main-h} applied with  $N=1$ and $\thisu$ in place of $u^0$ shows that $i \mapsto \norm{\thisu-\realoptu}_{\Test_{i+1}\Precond_{i+1}}^2$ is nonincreasing.
    This verifies \cref{lemma:opial-improved}\,\ref{item:opial-improved-non-increasing}.
    Further use of  \eqref{eq:convergence-result-main-h} shows that $\sum_{i=0}^{\infty} \frac{\hat{\delta}}{2}\norm{\nextu-\thisu}_{\Test_{i+1}\Precond_{i+1}}^2 < \infty$.
    Thus $\Test_{i+1}\Precond_{i+1}(\nextu-\thisu) \to 0$.
    By \eqref{eq:ppext} and \ref{item:weak-zm-h-limit}, any weak limit point $\bar{u}$ of the $\{\thisu\}_{i \in \N}$ therefore satisfies $\bar{u} \in \inv H(0)$.
    This verifies \cref{lemma:opial-improved}\,\ref{item:opial-improved-limit} with $\hat X=\inv H(0)$.
    The remaining assumptions of \cref{lemma:opial-improved} are verified by conditions \ref{item:weak-zm-opers} and \ref{item:weak-zm-a-limit}, which yields the claim.
\end{proof}

\section{Abstract analysis of the NL-PDHGM}
\label{sec:nlpdhgm-analysis}

We will apply \cref{thm:convergence-result-main-h} to \cref{alg:NL-PDHGM}, for which we have to verify \eqref{eq:convergence-fundamental-condition-iter-h}. This inequality always holds for some $\Penalty_{i+1}$, but for obvious reasons we aim for $\Penalty_{i+1} \le 0$.
To obtain fast convergence rates, our second goal is to make the metric $\Test_{i+1}\Precond_{i+1}$ grow as quickly as possible; the rate of this growth is constrained through \eqref{eq:convergence-fundamental-condition-iter-h} by the term $\frac{1}{2}\norm{\nextu-\realoptu}_{\Test_{i+1}\Precond_{i+1}-\Test_{i+2}\Precond_{i+2}}^2$. In this section, we therefore reduce \eqref{eq:convergence-fundamental-condition-iter-h} into a few simple conditions on the step length and testing operators.  
After stating our fundamental assumptions in \cref{sec:fundamental-assumptions}, we first derive in \cref{sec:testing-estimates} explicit (albeit somewhat technical) bounds on the step length operators to ensure \eqref{eq:convergence-fundamental-condition-iter-h}.
These require that the iterates $\{\thisu\}_{i \in \N}$ stay in a neighborhood of the critical point $\realoptu$. Therefore, in \cref{sec:locality}, we provide sufficient conditions for this requirement to hold in the form of additional step length bounds. We will use these  conditions in \cref{sec:scalar}, where we will derive the actual convergence rates for scalar step lengths.

\subsection{Fundamental assumptions}
\label{sec:fundamental-assumptions}

In what follows, we will need $K$ to be locally Lipschitz differentiable.
\begin{assumption}[locally Lipschitz $\grad K$]
	\label{ass:k-lipschitz}
        The operator $K:X\to Y$ is Fréchet differentiable, and for some $L \ge 0$ and a neighborhood $\neighx_K$ of $\realoptx$, 
    \begin{equation}
        \label{eq:ass-k-lipschitz}
        \|\kgrad{x}-\kgrad{x'}\| \le L\|x-x'\| \quad (x,x'\in\neighx_K).
    \end{equation}
\end{assumption}

\begin{remark}
	Using \cref{ass:k-lipschitz} and the mean value equality
	\[
	   K(x')=K(x)+\kgrad{x}(x'-x)+\int_{0}^{1}(\kgrad{x+s(x'-x)}-\kgrad{x})(x'-x)ds,
	\]
	we obtain for any $x,x' \in \neighx_K$ and $y\in \Dom F^*$ the useful inequality 
	\begin{equation}
		\label{eq:ass-k-lipschitz-2}
		\iprod{K(x')-K(x)-\kgrad{x}(x'-x)}{y} \le (L/2) \norm{x-x'}^2\norm{y}_{\Pnl},
	\end{equation}
	where the norm in the dual space consists of only the $\Ynl$ component because by definition, the function $x \mapsto \iprod{K(x)}{y}$ is linear in $x$ for $y\in\Ylin$. Consequently, for such $y$, the left-hand side of \cref{eq:ass-k-lipschitz-2} is zero.
\end{remark}

We also require a form of ``local operator-relative strong monotonicity'' of the saddle-point mapping $H$.
    Let $U$ be a Hilbert space, and $\Gamma \in \linear(U; U)$, $\Gamma\ge0$.
    We say that the set-valued map $H: U \setto U$ is \term{$\Gamma$-strongly monotone at $\realoptu$ for $\realoptw \in H(\realoptu)$} if there exists a neighborhood $\neighu \ni \realoptu$ such that 
    \begin{equation}
        \label{eq:monot}
        \iprod{w-\realoptw}{u-\realoptu} \ge \norm{u-\realoptu}_{\Gamma}^2, \qquad (u\in \neighu, w\in H(u)).
    \end{equation}
    If $\Gamma=0$, we say that $H$ is \term{monotone at $\realoptu$ for $\realoptw$}.

In particular, we will assume this monotonicity in terms of $\subdiff G$ and $\subdiff F^*$. The idea is that $G$ and $F^*$ can have different level of strong convexity on sub-blocks of the variables $x$ and $y$; we will in particular use this approach to assume strong convexity from $F^*$ on the subspace $\Ynl$ only. We will first need the following assumption in \cref{lemma:nonlinear-preconditioner-estimate}.
\begin{assumption}[monotone $\subdiff G$ and $\subdiff F^*$]
\label{ass:gf}
	The set-valued map $\subdiff G$ is ($\Gamma_G$-strongly) monotone at $\realoptx$ for $-\kgradconj{\realoptx}\realopty$ in the neighborhood $\neighx_G$ of $\realoptx$, and the set-valued map $\subdiff F^*$ is ($\Gamma_{F^*}$-strongly) monotone at $\realopty$ for $K(\realoptx)$ in the neighborhood $\neighy_{F^*}$ of $\realopty$.
\end{assumption}
Of course, in view of the assumed convexity of $G$ and $F^*$, \cref{ass:gf} is always satisfied with $\Gamma_G=\Gamma_{F^*}=0$.

Our next three-point assumption on $K$ is central to our analysis. It combines a second-order growth condition with a smoothness estimate, and the operator $\tilde{\Gamma}_G$ that we now introduce will later be employed as an acceleration factor. 
\begin{assumption}[three-point condition on $K$]
	\label{ass:k-nonlinear}
	For given $\Gamma_G,\tilde{\Gamma}_G\in \linear(X; X)$, neighborhood $\neighx_K$ of $\realoptx$, and some $\Lambda \in \linear(X; X)$, $\theta\ge0$, and $p \in [1,2]$ we have
	\begin{multline}
		\label{eq:ass-k-nonlinear}
		\iprod{[\kgrad{x'}-\kgrad{\realoptx}]^*\realopty}{x-\realoptx}+\norm{x-\realoptx}^2_{\Gamma_G-\tilde\Gamma_G}
		\\
		\ge \theta\norm{K(\realoptx)-K(x)-\kgrad{x}(\realoptx-x)}^p-\frac{1}{2}\norm{x-x'}^2_{\Lambda},
		\quad (x,x'\in\neighx_K).
	\end{multline}
\end{assumption}
We typically have that $0 \le \tilde \Gamma_G \le \Gamma_G$.
For linear $K$, \cref{ass:k-nonlinear} trivially holds for any $\tilde \Gamma_G \le \Gamma_G$, $\Lambda=0$, and $\theta \ge 0$. To motivate the assumption in nontrivial cases, consider the following example.
\begin{example}
	\label{ex:0d-dual}
	Let $F^*=\delta_{\{1\}}$ and take $\tilde \Gamma_G=\Gamma_G$ as well as $K(x)=J(x)$ for some $J \in C^2(X)$, which corresponds to the problem $\min_{x \in X} G(x) + J(x)$ where $J$ is smooth and possibly nonconvex but $G$ can be nonsmooth. In this case, both the over-relaxation step and dual update of \cref{alg:NL-PDHGM} are superfluous, and the entire algorithm reduces to conventional forward--backward splitting.
	If $x$ and $x'$ are suitably close to $\realoptx$, Taylor expansion shows that \eqref{eq:ass-k-nonlinear} can be expressed as
	\begin{equation}
		\label{eq:ass-k-nonlinear-jex-smooth}
		\iprod{x'-\realoptx}{x-\realoptx}_{\grad^2 J(\tilde{x}')}
		\ge
		\theta \norm{x-\realoptx}_{\grad ^2 J(\tilde{x})}^{2p}-\frac{1}{2}\norm{x-x'}^2_{\Lambda}
	\end{equation}
	for some $\tilde{x}=\tilde{x}(x,\realoptx),\tilde{x}'=\tilde{x}'(x',\realoptx) \in X$. If $\grad^2 J(\tilde{x}')$ is positive definite, i.e. $\grad^2 J(\tilde{x}')\ge\varepsilon I$ for some $\varepsilon>0$, then writing
	\begin{equation}
		\label{eq:0d-dual}
                \begin{aligned}[t]
                    \iprod{x'-\realoptx}{x-\realoptx}_{\grad^2 J(\tilde{x}')}&=\norm{x-\realoptx}_{\grad^2 J(\tilde{x}')}^2+\iprod{x'-x}{x-\realoptx}_{\grad^2 J(\tilde{x}')}\\
                    &\ge\norm{x-\realoptx}_{\grad^2 J(\tilde{x}')}^2-(1-\alpha)\norm{x-\realoptx}_{\grad^2 J(\tilde{x}')}^2\\
                    \MoveEqLeft[-1]-\frac{1}{4(1-\alpha)}\norm{x'-x}_{\grad^2 J(\tilde{x}')}^2
                    \\
                    &\ge
                    \alpha\varepsilon\norm{x-\realoptx}^2- \frac{L}{4(1-\alpha)}\norm{x'-x}^2,
                \end{aligned}
	\end{equation}
	 we see that \eqref{eq:ass-k-nonlinear-jex-smooth} holds in some neighborhood $\neighx_K$ of $\realoptx$, for any $p\in[1,2]$, $\theta>0$ small enough, and $\Lambda>0$ large enough.
	The positivity of $\grad^2 J(\tilde{x}')$ is guaranteed by the positivity of $\grad^2 J(\realoptx)$ for $x'$ close to $\realoptx$. Alternatively, recalling the full expression \eqref{eq:ass-k-nonlinear}, we can use the strong monotonicity of $\subdiff G$ at $\realoptx$. Overall, we therefore require $\Gamma_G+\grad^2 J(\realoptx)$ to be positive, which is a standard condition in nonconvex optimization.
\end{example}

If $\Dom F^*$ is not a singleton, we can apply the reasoning of \cref{ex:0d-dual} to $J(x)\defeq K(x)^*\realopty$. The positivity of $\Gamma_G+\grad^2(K(\freevar)^*\realopty)(\realoptx)$ then amounts to a second-order optimality condition on the solution $\realoptx$ to the problem $\min_x G(x)+\iprod{K(x)}{\realopty}$. 
Indeed, we can can verify \cref{ass:k-nonlinear} simply based on the monotonicity of $\subdiff G + \grad K(\freevar)^*\realopty$ at  $\realoptx$.
\begin{proposition} 
	\label{pr:strongly-convex-primal}
	Suppose \cref{ass:k-lipschitz} (locally Lipschitz $\grad K$) and \cref{ass:gf} (monotone $\subdiff G$ and $\subdiff F^*$) hold and for some $\gamma_x>0$, 
	\begin{equation}
	   \label{eq:strongly-convex-primal}
		\norm{x-\realoptx}_{\Gamma_G-\tilde\Gamma_G}^2
		+\iprod{(\kgrad{x}-\kgrad{\realoptx})(x-\realoptx)}{\realopty}
		\ge\gamma_x\norm{x-\realoptx}^2
		\quad
		(\forall x\in\neighx_K).
	\end{equation}
	Then \cref{ass:k-nonlinear} holds with $p=1$, $\theta=2(\gamma_x-\xi)L^{-1}$, and $\Lambda=L^2\norm{\Pnl \realopty}^2(2\xi)^{-1}I$ for any $\xi\in(0,\gamma_x]$.
\end{proposition}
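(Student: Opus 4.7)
The plan is to transform the given hypothesis into the three-point condition \eqref{eq:ass-k-nonlinear} by adding and subtracting a suitable pivot term, then applying the Lipschitz bound on $\grad K$ together with Young's inequality, and finally converting a squared-norm bound to the required $K$-remainder bound.

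First I would bridge the two differently-centered bilinear forms by writing
\[
    \iprod{[\kgrad{x'}-\kgrad{\realoptx}]^*\realopty}{x-\realoptx}
    = \iprod{(\kgrad{x}-\kgrad{\realoptx})(x-\realoptx)}{\realopty}
    + \iprod{(\kgrad{x'}-\kgrad{x})(x-\realoptx)}{\realopty}.
\]
On the first term on the right, I would apply hypothesis \eqref{eq:strongly-convex-primal} (combined with the $\Gamma_G-\tilde\Gamma_G$-weighted norm on the left-hand side of the target inequality) to obtain a lower bound of $\gamma_x\norm{x-\realoptx}^2$.

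The second term is an error that I would control with \cref{ass:k-lipschitz}. Because $x\mapsto\iprod{K(x)}{y}$ is linear for $y\in\Ylin$, the operator $\kgrad{x'}-\kgrad{x}$ annihilates $\Ylin$ in the dual direction, so only the $\Ynl$-component of $\realopty$ contributes:
\[
    \iprod{(\kgrad{x'}-\kgrad{x})(x-\realoptx)}{\realopty}
    = \iprod{(\kgrad{x'}-\kgrad{x})(x-\realoptx)}{\Pnl\realopty}.
\]
Applying Cauchy--Schwarz, the Lipschitz bound $\norm{\kgrad{x'}-\kgrad{x}}\le L\norm{x-x'}$, and Young's inequality with parameter $\xi\in(0,\gamma_x]$ then yields
\[
    \iprod{(\kgrad{x'}-\kgrad{x})(x-\realoptx)}{\realopty}
    \ge -\xi\norm{x-\realoptx}^2
    - \frac{L^2\norm{\Pnl\realopty}^2}{4\xi}\norm{x-x'}^2,
\]
which is precisely $-\xi\norm{x-\realoptx}^2-\tfrac12\norm{x-x'}^2_\Lambda$ for the stated $\Lambda$.

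Combining the two estimates gives a lower bound of $(\gamma_x-\xi)\norm{x-\realoptx}^2-\tfrac12\norm{x-x'}^2_\Lambda$ on the left-hand side of \eqref{eq:ass-k-nonlinear}. The last step is to convert $\norm{x-\realoptx}^2$ into $\norm{K(\realoptx)-K(x)-\kgrad{x}(\realoptx-x)}$, which follows directly from the mean-value identity quoted after \cref{ass:k-lipschitz}: it gives $\norm{K(\realoptx)-K(x)-\kgrad{x}(\realoptx-x)}\le (L/2)\norm{x-\realoptx}^2$, whence
\[
    (\gamma_x-\xi)\norm{x-\realoptx}^2
    \ge \frac{2(\gamma_x-\xi)}{L}\norm{K(\realoptx)-K(x)-\kgrad{x}(\realoptx-x)}
    = \theta \norm{K(\realoptx)-K(x)-\kgrad{x}(\realoptx-x)}^p
\]
with $p=1$, completing \eqref{eq:ass-k-nonlinear}. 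The only mildly delicate step is the projection onto $\Ynl$ in the Lipschitz estimate; apart from this bookkeeping the argument is a routine Young-inequality split.
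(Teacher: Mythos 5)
Your proposal is correct and follows essentially the same route as the paper's proof: the same pivot decomposition through $\kgrad{x}$, the same Cauchy--Schwarz/Lipschitz/Young estimate (with the $\Pnl\realopty$ projection made explicit, which the paper handles by ``arguing as in \eqref{eq:ass-k-lipschitz-2}''), and the same final conversion of $\norm{x-\realoptx}^2$ into the $K$-remainder via the mean-value bound $\norm{K(\realoptx)-K(x)-\kgrad{x}(\realoptx-x)}\le(L/2)\norm{x-\realoptx}^2$. No gaps to report.
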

\begin{proof}
	An application of Cauchy's inequality, \cref{ass:k-lipschitz}, and \eqref{eq:strongly-convex-primal} yields for any $\xi>0$ the estimate
	\[\begin{split}
		\iprod{[\kgrad{x'}-\kgrad{\realoptx}]^*\realopty}{x-\realoptx}+\norm{x-\realoptx}^2_{\Gamma_G-\tilde\Gamma_G}
		&=
		\iprod{[\kgrad{x}-\kgrad{\realoptx}]^*\realopty}{x-\realoptx}+\norm{x-\realoptx}^2_{\Gamma_G-\tilde\Gamma_G}
		\\
		&\quad
		+\iprod{(\kgrad{x'}-\kgrad{x})(x-\realoptx)}{\realopty}
		\\
		&
		\ge(\gamma_x-\xi)\norm{x-\realoptx}^2-L^2\norm{\Pnl \realopty}^2(4\xi)^{-1} \norm{x'-x}^2.
	\end{split}\]
	At the same time, using \cref{eq:ass-k-lipschitz} and the reasoning of \eqref{eq:ass-k-lipschitz-2},
	$\norm{K(\realoptx)-K(x)-\kgrad{x}(\realoptx-x)}\le(L/2)\norm{x-\realoptx}^2$.
	So \cref{ass:k-nonlinear} holds if we take $p=1$, $\theta\le 2(\gamma_x-\xi)/L$, and $\Lambda=L^2\norm{\Pnl \realopty}^2(2\xi)^{-1}I$.
\end{proof}

\begin{remark}
	Observe that if $\Gamma_G-\tilde\Gamma_G\ge\varepsilon I$ for some $\varepsilon>L\norm{\Pnl \realopty}$, then \cref{ass:k-lipschitz} (locally Lipschitz $\grad K$) guarantees \eqref{eq:strongly-convex-primal} for $\gamma_x=\varepsilon-L\norm{\Pnl \realopty}$. This requires $\norm{\Pnl \realopty}$ to be small, which was a central assumption in \cite{tuomov-nlpdhgm} that we intend to avoid in the present work.
    Also note that if $\iprod{K(\freevar)}{\realopty}$ is convex, then \eqref{eq:strongly-convex-primal} holds for $\gamma_x=\varepsilon$, so estimating $\gamma_x$ based on the Lipschitz continuity of $\grad K$ alone provides a too conservative estimate.
\end{remark}

More generally, while based on our discussion above the satisfaction of \eqref{ass:k-nonlinear} seems reasonable to expect, its verification can demand some effort. To demonstrate that the condition can be satisfied, we verify this in \cref{sec:complex} for a simple example of reconstructing the phase and amplitude of a complex number from a noisy measurement.

Combining \cref{ass:k-lipschitz,ass:gf,ass:k-nonlinear}, we assume throughout the rest of the paper that for some $\metricRhoY\ge0$, the corresponding neighborhood  
\begin{equation}
	\label{eq:neighu-definition}
	\neighu (\metricRhoY) \defeq (\neighx_G \isect \neighx_K) \times (\B_{\NL}(\realopty, \metricRhoY) \isect \neighy_{F^*})
\end{equation}
of $\realoptu$ is nonempty.

\subsection{General estimates}
\label{sec:testing-estimates}

We verify the conditions of \cref{thm:convergence-result-main-h} in several steps. First, we ensure that the operator $\Test_{i+1}\Precond_{i+1}$ giving rise to the local metric is self-adjoint.
Then we show that  $\Test_{i+2}\Precond_{i+2}$ and the update $\Test_{i+1}(\Precond_{i+1}+\GammaLift{i+1})$ performed by the algorithm yield identical norms, where $\GammaLift{i+1}$ represents some off-diagonal components from the algorithm as well as any strong monotonicity.
Finally, we estimate $\tilde H_{i+1}(u)$ in order to verify \eqref{eq:convergence-fundamental-condition-iter-h}.

We require for some $\kappa \in [0, 1)$, $\eta_i>0$, $\tilde\Gamma_G \in \linear(X; X)$, and $\tilde\Gamma_{F^*} \in \linear(Y;Y)$ the following relationships:
\begin{subequations}
\label{eq:basic-step-rules}
\begin{align}
    \omega_i & \defeq \eta_i/\eta_{i+1},
    & \SigmaTest_{i}\Sigma_i & = \eta_i I,
    \\
    \TauTest_i\Tau_i & = \eta_i I,
    & (1-\kappa)\SigmaTest_{i+1} & \ge \eta_i^2 \kgrad{\thisx}\inv\TauTest_i\kgradconj{\thisx},
    \\
    \TauTest_i&=\TauTest_i^* \ge 0,
    & \SigmaTest_{i+1}&=\SigmaTest_{i+1}^*\ge0,
    \\
    \label{eq:test-accel-update}
    \TauTest_{i+1}&=\TauTest_i(1+2\Tau_i\tilde\Gamma_G),
    &
    \SigmaTest_{i+2}&=\SigmaTest_{i+1}(1+2\Sigma_{i+1}\tilde\Gamma_{F^*}).
\end{align}
\end{subequations}
In \cref{sec:scalar}, we will verify these relationships for specific scalar step length rules in \cref{alg:NL-PDHGM}.

\begin{lemma}
    \label{lemma:zimi-estim}
    Fix $i \in \N$ and suppose \eqref{eq:basic-step-rules} holds. Then $\Test_{i+1}\Precond_{i+1}$ is self-adjoint and satisfies
    \[
        \Test_{i+1}\Precond_{i+1}
        \ge
        \left(\begin{smallmatrix}
            \delta \TauTest_i & 0 \\
            0 & (\kappa-\delta)(1-\delta)^{-1}\SigmaTest_{i+1}
        \end{smallmatrix}\right)
        \quad\text{ for any }\delta \in [0, \kappa].
    \]
\end{lemma}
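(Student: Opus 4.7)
The plan is to compute $\Test_{i+1}\Precond_{i+1}$ explicitly, verify self-adjointness from the simplified form, and then establish the lower bound via a Schur complement argument backed by the rules in \eqref{eq:basic-step-rules}.

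First I would carry out the block multiplication. Using $\TauTest_i\Tau_i = \eta_i I$, $\SigmaTest_{i+1}\Sigma_{i+1} = \eta_{i+1} I$, and $\omega_i = \eta_i/\eta_{i+1}$, one gets
\[
    \Test_{i+1}\Precond_{i+1}
    =
    \begin{pmatrix}
        \TauTest_i & -\eta_i \kgradconj{\thisx} \\
        -\eta_i \kgrad{\thisx} & \SigmaTest_{i+1}
    \end{pmatrix}.
\]
Self-adjointness is then immediate: the diagonal blocks $\TauTest_i$ and $\SigmaTest_{i+1}$ are self-adjoint by \eqref{eq:basic-step-rules}, and the off-diagonal blocks are adjoints of each other.

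For the lower bound, I would subtract the right-hand side and show that the resulting operator is positive semidefinite. The difference is
\[
    \begin{pmatrix}
        (1-\delta)\TauTest_i & -\eta_i \kgradconj{\thisx} \\
        -\eta_i \kgrad{\thisx} & \tfrac{1-\kappa}{1-\delta}\SigmaTest_{i+1}
    \end{pmatrix},
\]
where the $(2,2)$ block follows from $1 - (\kappa-\delta)/(1-\delta) = (1-\kappa)/(1-\delta)$; note that $\delta \in [0,\kappa]$ with $\kappa < 1$ keeps all factors nonnegative and well-defined. Since the upper-left block is positive, positivity of the whole operator is equivalent to the Schur complement condition
\[
    \tfrac{1-\kappa}{1-\delta}\SigmaTest_{i+1}
    \ge
    \eta_i^2 \kgrad{\thisx}\bigl((1-\delta)\TauTest_i\bigr)^{-1}\kgradconj{\thisx}
    =
    \tfrac{1}{1-\delta}\,\eta_i^2 \kgrad{\thisx}\inv\TauTest_i\kgradconj{\thisx}.
\]
Cancelling the common factor $(1-\delta)^{-1}$ reduces this to $(1-\kappa)\SigmaTest_{i+1} \ge \eta_i^2 \kgrad{\thisx}\inv\TauTest_i\kgradconj{\thisx}$, which is exactly one of the hypotheses in \eqref{eq:basic-step-rules}.

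The only mild subtlety is making the Schur complement argument rigorous when $\TauTest_i$ may only be positive \emph{semi}definite (since \eqref{eq:basic-step-rules} allows $\TauTest_i \ge 0$). The cleanest workaround is to complete the square directly: for $u=(x,y)$, write
\[
    \Bigl\langle u,\ \bigl(\Test_{i+1}\Precond_{i+1} - \mathrm{diag}(\delta\TauTest_i, \tfrac{\kappa-\delta}{1-\delta}\SigmaTest_{i+1})\bigr)u\Bigr\rangle
    = (1-\delta)\bigl\|x - \tfrac{\eta_i}{1-\delta}\inv\TauTest_i\kgradconj{\thisx}y\bigr\|^2_{\TauTest_i} + \text{(remainder in } y),
\]
where the remainder in $y$ equals $\langle y, (\tfrac{1-\kappa}{1-\delta}\SigmaTest_{i+1} - \tfrac{\eta_i^2}{1-\delta}\kgrad{\thisx}\inv\TauTest_i\kgradconj{\thisx})y\rangle \ge 0$ by \eqref{eq:basic-step-rules}. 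Treating $\inv\TauTest_i$ on its range (and handling the kernel separately by noting that the off-diagonal coupling vanishes there) yields the claim without requiring strict positivity.
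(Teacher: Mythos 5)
Your proposal is correct and follows essentially the same route as the paper: the block expansion of $\Test_{i+1}\Precond_{i+1}$ using $\TauTest_i\Tau_i=\eta_i I$ and $\SigmaTest_{i+1}\Sigma_{i+1}\omega_i=\eta_i I$ gives self-adjointness, and your Schur-complement/completion-of-squares step is exactly the weighted Cauchy inequality the paper uses, followed by the same invocation of $(1-\kappa)\SigmaTest_{i+1}\ge\eta_i^2\kgrad{\thisx}\inv\TauTest_i\kgradconj{\thisx}$ from \eqref{eq:basic-step-rules}. Your closing caveat about merely semidefinite $\TauTest_i$ is moot, since \eqref{eq:basic-step-rules} already contains $\inv\TauTest_i$ and thus presupposes invertibility (and your claim that the off-diagonal coupling vanishes on $\ker\TauTest_i$ would otherwise need justification), but this does not affect the validity of the main argument.
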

\begin{proof}
    From \eqref{eq:precond} and \eqref{eq:basic-step-rules}, we have
    $\TauTest_i\Tau_i=\eta_i I$ and  $\SigmaTest_{i+1} \Sigma_{i+1} \omega_i=\eta_i I$. Hence
    \begin{equation}
        \label{eq:test-precond-expansion}
        \Test_{i+1}\Precond_{i+1}
        =
        \begin{pmatrix}
            \TauTest_i & -\eta_i \kgradconj{\thisx} \\
            -\eta_i \kgrad{\thisx} & \SigmaTest_{i+1}
        \end{pmatrix},
    \end{equation}
    and therefore $\Test_{i+1}\Precond_{i+1}$ is self-adjoint.
    Cauchy's inequality furthermore implies that
    \begin{equation}
    \label{eq:test-precond-expansion-estimate}
        \Test_{i+1}\Precond_{i+1}
        \ge
        \begin{pmatrix}
            \delta \TauTest_i & 0 \\
            0 & \SigmaTest_{i+1} - \frac{\eta^2_i}{1-\delta}\kgrad{\thisx}\inv\TauTest_i\kgradconj{\thisx}
        \end{pmatrix}.
    \end{equation}
    Now \eqref{eq:basic-step-rules} ensures the remaining part of the statement.
\end{proof}

Our next step is to simplify $\Test_{i+1}\Precond_{i+1}-\Test_{i+2}\Precond_{i+2}$ in \eqref{eq:convergence-fundamental-condition-iter-h} while keeping the option to accelerate the method when some of the blocks of $H$ exhibit strong monotonicity.
\begin{lemma}
    \label{lemma:local-metric-transfer}
    Fix $i \in \N$, and suppose \eqref{eq:basic-step-rules} holds.
    Then
    $
        \frac{1}{2}\norm{\freevar}_{\Test_{i+1}(\Precond_{i+1}+\GammaLift{i+1})-\Test_{i+2}\Precond_{i+2}}^2
        =
        0
    $ for
    \begin{equation}
    \label{eq:gammalift-def}
        \GammaLift{i+1}\defeq
        \begin{pmatrix}
            2\Tau_i\tilde\Gamma_G & 2\Tau_i \kgradconj{\thisx} \\
            -2\Sigma_{i+1}\kgrad{\nextx} & 2\Sigma_{i+1}\tilde\Gamma_{F^*}
        \end{pmatrix}.
    \end{equation}
\end{lemma}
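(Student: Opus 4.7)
The plan is to compute $\Test_{i+1}(\Precond_{i+1}+\GammaLift{i+1})$ block-by-block and compare it, entry-by-entry, with the expression for $\Test_{i+2}\Precond_{i+2}$ that results from applying the identity \eqref{eq:test-precond-expansion} of \cref{lemma:zimi-estim} at index $i+1$ instead of $i$. The claim is not that the two operators coincide, but that their difference is skew-adjoint, so that the induced quadratic form $\iprod{\freevar}{(\cdot)\freevar}$ vanishes.

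First I would note that from \eqref{eq:precond} and \eqref{eq:gammalift-def},
\[
\Precond_{i+1}+\GammaLift{i+1}
=\begin{pmatrix} I+2\Tau_i\tilde\Gamma_G & \Tau_i\kgradconj{\thisx} \\
-\omega_i\Sigma_{i+1}\kgrad{\thisx}-2\Sigma_{i+1}\kgrad{\nextx} & I+2\Sigma_{i+1}\tilde\Gamma_{F^*}\end{pmatrix}.
\]
Left-multiplying by $\Test_{i+1}=\operatorname{diag}(\TauTest_i,\SigmaTest_{i+1})$ and using $\TauTest_i\Tau_i=\eta_i I$, $\SigmaTest_{i+1}\Sigma_{i+1}=\eta_{i+1}I$, and $\omega_i=\eta_i/\eta_{i+1}$, the diagonal entries collapse via the acceleration update \eqref{eq:test-accel-update} to $\TauTest_i(I+2\Tau_i\tilde\Gamma_G)=\TauTest_{i+1}$ and $\SigmaTest_{i+1}(I+2\Sigma_{i+1}\tilde\Gamma_{F^*})=\SigmaTest_{i+2}$, matching the diagonal of $\Test_{i+2}\Precond_{i+2}$ exactly. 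The off-diagonals produce $\eta_i\kgradconj{\thisx}$ in the $(1,2)$ slot and $-\eta_i\kgrad{\thisx}-2\eta_{i+1}\kgrad{\nextx}$ in the $(2,1)$ slot, whereas $\Test_{i+2}\Precond_{i+2}$ has $-\eta_{i+1}\kgradconj{\nextx}$ and $-\eta_{i+1}\kgrad{\nextx}$ respectively.

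Subtracting, the difference $\Test_{i+1}(\Precond_{i+1}+\GammaLift{i+1})-\Test_{i+2}\Precond_{i+2}$ has zero diagonal blocks and off-diagonal blocks
\[
A^*\defeq\eta_i\kgradconj{\thisx}+\eta_{i+1}\kgradconj{\nextx},\qquad -A=-(\eta_i\kgrad{\thisx}+\eta_{i+1}\kgrad{\nextx}),
\]
i.e.\ it has the skew-adjoint form $\bigl(\begin{smallmatrix}0 & A^*\\ -A & 0\end{smallmatrix}\bigr)$. For any $u=(x,y)$, the quadratic form evaluates to $\iprod{A^*y}{x}-\iprod{Ax}{y}=\iprod{y}{Ax}-\iprod{Ax}{y}=0$, which is exactly the assertion.

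There is no real obstacle here beyond careful bookkeeping: the only things to watch are that $\TauTest_i$ and $\Tau_i$ need not commute individually but the products $\TauTest_i\Tau_i$, $\SigmaTest_{i+1}\Sigma_{i+1}$ are scalar multiples of $I$ by \eqref{eq:basic-step-rules}, so all rearrangements in the diagonal blocks go through; and that the factor $2$ in the $(2,1)$ block of $\GammaLift{i+1}$ is precisely what converts the $-\omega_i\eta_{i+1}\kgrad{\thisx}=-\eta_i\kgrad{\thisx}$ contribution from $\Precond_{i+1}$ into the skew-adjoint combination involving $\kgrad{\thisx}$ and $\kgrad{\nextx}$. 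No analytic step is required; the lemma is purely algebraic once the updates \eqref{eq:basic-step-rules} are in place.
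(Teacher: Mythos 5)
Your proposal is correct and follows essentially the same route as the paper: both compute $\Test_{i+1}(\Precond_{i+1}+\GammaLift{i+1})-\Test_{i+2}\Precond_{i+2}$ using \eqref{eq:basic-step-rules}, \eqref{eq:test-accel-update}, and \eqref{eq:test-precond-expansion}, obtain the skew-adjoint block form with off-diagonal $\pm(\eta_i\kgrad{\thisx}+\eta_{i+1}\kgrad{\nextx})$, and conclude that the induced quadratic form vanishes. Nothing is missing.
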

\begin{proof}
    Using \eqref{eq:basic-step-rules} and \eqref{eq:test-precond-expansion} can write
    \[
        \Test_{i+1}(\Precond_{i+1}+\GammaLift{i+1})-\Test_{i+2}\Precond_{i+2} 
        =
        \begin{pmatrix}
            0 & [\eta_{i+1}\kgrad{\nextx} + \eta_i\kgrad{\thisx}]^* \\
            -\eta_{i+1}\kgrad{\nextx} - \eta_i\kgrad{\thisx} & 0 \\
        \end{pmatrix}.
    \]
    Inserting this into the definition of the weighted norm yields the claim.
\end{proof}

The next somewhat technical lemma estimates the linearizations of $\tilde H_{i+1}$ that are needed to make the abstract algorithm \eqref{eq:ppext} computable for nonlinear $K$.
\begin{lemma}
	\label{lemma:nonlinear-preconditioner-estimate}
	Suppose \cref{ass:k-lipschitz} (locally Lipschitz $\grad K$), \cref{ass:gf} (monotone $\subdiff G$ and $\subdiff F^*$), and \eqref{eq:basic-step-rules} hold. For a fixed $i \in \N$, let $\overnextx \in \neighx_K$ and let $\metricRhoY \ge 0$ be such that $\thisu, \nextu\in \neighu(\metricRhoY)$.
        Also suppose \cref{ass:k-nonlinear} (three-point condition on $K$) holds with $\theta\ge\metricRhoY^{2-p}p^{-p}\omega_i^{-1}\zeta^{1-p}$ for some $\zeta>0$ and $p\in [1,2]$.
	Then
	\begin{multline*}
		\iprod{\tilde H_{i+1}(\nextu)}{\nextu-\realoptu}_{\Test_{i+1}\Precond_{i+1}}
		-\frac{1}{2}\norm{\nextu-\realoptu}_{\Test_{i+1}\GammaLift{i+1}}^2
		\\
		\ge
		\norm{\nexty-\realopty}_{\eta_{i+1}[\Gamma_{F^*}-\tilde\Gamma_{F^*}-(p-1)\zeta\Pnl]}^2
		-\frac{1}{2}\norm{\nextx-\thisx}^2_{\eta_i[\Lambda+L(2+\omega_i)\metricRhoY I]}.
	\end{multline*}
\end{lemma}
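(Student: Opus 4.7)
The strategy is to expand the weighted inner product using the diagonal scaling $\operatorname{diag}(\eta_i I,\eta_{i+1} I)$ coming from $\TauTest_i\Tau_i=\eta_i I$ and $\SigmaTest_{i+1}\Sigma_{i+1}=\eta_{i+1} I$ in \eqref{eq:basic-step-rules}, apply the local monotonicity assumptions of \cref{ass:gf} at $\realoptu$ to produce $\Gamma_G$ and $\Gamma_{F^*}$ quadratic contributions together with cross terms involving $\grad K$ at the iterates, and then subtract $\tfrac{1}{2}\norm{\nextu-\realoptu}^2_{\Test_{i+1}\GammaLift{i+1}}$ computed from the explicit form \eqref{eq:gammalift-def}. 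The key algebraic observation is that the off-diagonal contribution $\eta_i\kgradconj{\thisx}(\nexty-\realopty)$ from $\Test_{i+1}\GammaLift{i+1}$ cancels exactly with the analogous cross term generated by primal monotonicity. What remains is the primal residual $\eta_i[\norm{\nextx-\realoptx}^2_{\Gamma_G-\tilde\Gamma_G}+\iprod{(\kgradconj{\thisx}-\kgradconj{\realoptx})\realopty}{\nextx-\realoptx}]$, the dual quadratic $\eta_{i+1}\norm{\nexty-\realopty}^2_{\Gamma_{F^*}-\tilde\Gamma_{F^*}}$, and a dual cross term $\eta_{i+1}\iprod{A+B}{\nexty-\realopty}$ where, after using $\overnextx-\nextx=\omega_i(\nextx-\thisx)$, we set $A\defeq K(\realoptx)-K(\nextx)-\kgrad{\nextx}(\realoptx-\nextx)$ and $B\defeq-[K(\overnextx)-K(\nextx)-\kgrad{\thisx}(\overnextx-\nextx)]$.

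I then invoke \cref{ass:k-nonlinear} with $x=\nextx$ and $x'=\thisx$ on the primal residual to obtain the lower bound $\eta_i\theta\norm{A}^p-\tfrac{\eta_i}{2}\norm{\nextx-\thisx}^2_\Lambda$. For the $B$-piece, I add and subtract $\kgrad{\nextx}(\overnextx-\nextx)$ and apply \eqref{eq:ass-k-lipschitz-2} to the second-order residual and Cauchy--Schwarz combined with \cref{ass:k-lipschitz} to the $(\kgrad{\thisx}-\kgrad{\nextx})(\overnextx-\nextx)$ piece; both of these quantities are orthogonal to $\Ylin$, so only the $\Pnl$-norm of $\nexty-\realopty$ appears, and the localization $\norm{\nexty-\realopty}_\Pnl\le\metricRhoY$ yields $\eta_{i+1}\iprod{B}{\nexty-\realopty}\ge-\eta_{i+1}L\omega_i(1+\omega_i/2)\metricRhoY\norm{\nextx-\thisx}^2$. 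Substituting $\omega_i=\eta_i/\eta_{i+1}$ from \eqref{eq:basic-step-rules} collapses this expression to exactly $-\tfrac{\eta_i L(2+\omega_i)\metricRhoY}{2}\norm{\nextx-\thisx}^2$, which together with the $-\tfrac{\eta_i}{2}\norm{\nextx-\thisx}^2_\Lambda$ reproduces the claimed $\norm{\nextx-\thisx}^2$ contribution.

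The main obstacle is controlling the $A$-piece for general $p\in[1,2]$. Since $A$ is orthogonal to $\Ylin$ (by the very definition of $\Ylin$ applied to $K(\realoptx)-K(\nextx)$ against $\kgrad{\nextx}(\realoptx-\nextx)$), one has $|\iprod{A}{\nexty-\realopty}|\le\norm{A}\norm{\nexty-\realopty}_\Pnl$, so the task is to absorb $\eta_{i+1}\norm{A}\norm{\nexty-\realopty}_\Pnl$ into the sum of $\eta_i\theta\norm{A}^p$ (already produced by the three-point condition) and the allowable slack $\eta_{i+1}(p-1)\zeta\norm{\nexty-\realopty}^2_\Pnl$ (which may be subtracted from the $\eta_{i+1}\norm{\nexty-\realopty}^2_{\Gamma_{F^*}-\tilde\Gamma_{F^*}}$ term on the right-hand side). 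Young's inequality with conjugate exponents $p$ and $q=p/(p-1)$ produces a residual of order $\norm{\nexty-\realopty}^q_\Pnl$, whose exponent does not match the required quadratic form unless $p=2$; the localization $\norm{\nexty-\realopty}_\Pnl\le\metricRhoY$ is precisely what permits writing $\norm{\nexty-\realopty}^q_\Pnl\le\metricRhoY^{q-2}\norm{\nexty-\realopty}^2_\Pnl$ (noting $q-2=(2-p)/(p-1)\ge0$), and then optimizing the Young's parameter yields exactly the threshold $\theta\ge\metricRhoY^{2-p}p^{-p}\omega_i^{-1}\zeta^{1-p}$ assumed in the hypothesis. The boundary cases $p=1$ (direct comparison with no $\zeta$ needed) and $p=2$ (standard $ab\le\alpha a^2+b^2/(4\alpha)$) arise as limits of the same calculation.
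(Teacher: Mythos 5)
Your proposal is correct and follows essentially the same route as the paper: the same expansion of the tested duality product against $\tilde H_{i+1}$ and $\GammaLift{i+1}$, the same reduction via monotonicity at $\realoptu$ to the primal residual, the dual quadratic, and the cross terms with $A$ and $B$, the same use of \cref{ass:k-nonlinear} with $x=\nextx$, $x'=\thisx$ and of \eqref{eq:ass-k-lipschitz-2} with the $\Pnl$-seminorm and $\norm{\nexty-\realopty}_{\Pnl}\le\metricRhoY$. The only cosmetic difference is that the paper packages the absorption of the $A$-term into a single weighted Young inequality $ab\le\tfrac1p a^p b^{2-p}+(1-\tfrac1p)b^2$, whereas you use standard Young with an optimized parameter plus $\norm{\nexty-\realopty}_{\Pnl}^{q-2}\le\metricRhoY^{q-2}$, which yields the identical threshold on $\theta$.
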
		
\begin{proof}
	From \eqref{eq:tilde-h}, \eqref{eq:basic-step-rules}, and \eqref{eq:gammalift-def}, we have
	\begin{equation}
	\label{eq:nonlinear-preconditioner-estimate-d-def}
	\begin{aligned}[t]
		D&\defeq \iprod{\tilde H_{i+1}(\nextu)}{\nextu-\realoptu}_{\Test_{i+1}\Precond_{i+1}}
		-\frac{1}{2}\norm{\nextu-\realoptu}_{\Test_{i+1}\GammaLift{i+1}}^2\\
		&=\iprod{H(\nextu)}{\nextu-\realoptu}_{\Test_{i+1}\Step_{i+1}}
		\\
		&\quad	
		+\eta_i\iprod{[\kgrad{\thisx}-\kgrad{\nextx}](\nextx-\realoptx)}{\nexty}
		\\
		&\quad
		+\eta_{i+1}\iprod{K(\nextx)-K(\overnextx)-\kgrad{\thisx}(\nextx-\overnextx)}{\nexty-\realopty}
		\\
		&\quad
		+ \iprod{(\eta_{i+1}\kgrad{\nextx}-\eta_i\kgrad{\thisx})(\nextx-\realoptx)}{\nexty-\realopty}.
		\\
		&\quad 
		-\eta_i\norm{\nextx-\realoptx}_{\tilde\Gamma_G}^2-\eta_{i+1}\norm{\nexty-\realopty}_{\tilde\Gamma_{F^*}}^2.
	\end{aligned}
	\end{equation}
	Since $0 \in H(\realoptu)$, we have $z_G \defeq  - \kgradconj{\realoptx}\realopty \in \subdiff G(\realoptx)$ and $z_{F^*} \defeq  K(\realoptx) \in \subdiff F^*(\realopty)$.
	Using \eqref{eq:basic-step-rules}, we can therefore expand
	\begin{equation*}
        \begin{split}
		\iprod{H(\nextu)}{\nextu-\realoptu}_{\Test_{i+1}\Step_{i+1}}
        & =\eta_i\iprod{\subdiff G(\nextx)-z_G}{\nextx-\realoptx}
		+\eta_{i+1} \iprod{\subdiff F^*(\nexty)-z_{F^*}}{\nexty-\realopty}
        \\
        \MoveEqLeft[-1]
		+\eta_i\iprod{\kgradconj{\nextx} \nexty-\kgradconj{\realoptx}\realopty}{\nextx-\realoptx}
        \\
        \MoveEqLeft[-1]
        +\eta_{i+1}\iprod{K(\realoptx)-K(\nextx)}{\nexty-\realopty}.
        \end{split}
	\end{equation*}
	Using the local (strong) monotonicity of $G$ and $F^*$ (\cref{ass:gf}) and rearranging terms, we obtain 
	\begin{equation}
    	\label{eq:h-strong-monotonicity-estimate}
        \begin{aligned}[t]
		\iprod{H(\nextu)}{\nextu-\realoptu}_{\Test_{i+1}\Step_{i+1}}
        & \ge \eta_i\norm{\nextx-\realoptx}_{\Gamma_G}^2
        +\eta_{i+1}\norm{\nexty-\realopty}_{\Gamma_{F^*}}^2
        \\
        \MoveEqLeft[-1]
		+\eta_i\iprod{\kgrad{\nextx}(\nextx-\realoptx)}{ \nexty}
        \\
        \MoveEqLeft[-1]
        -\eta_i\iprod{\kgrad{\realoptx}(\nextx-\realoptx)}{\realopty}
        \\
        \MoveEqLeft[-1]
        +\eta_{i+1}\iprod{K(\realoptx)-K(\nextx)}{\nexty-\realopty}.
        \end{aligned}
	\end{equation}
	Now we plug the estimate \eqref{eq:h-strong-monotonicity-estimate} into \eqref{eq:nonlinear-preconditioner-estimate-d-def} and rearrange to arrive at 
        \allowdisplaybreaks
	\begin{align*}
        \begin{split}
		D&\ge\eta_i\norm{\nextx-\realoptx}_{\Gamma_G-\tilde\Gamma_G}^2+\eta_{i+1}\norm{\nexty-\realopty}_{\Gamma_{F^*}-\tilde\Gamma_{F^*}}^2
		\\
		&\quad
		-\eta_i\iprod{\kgrad{\realoptx}(\nextx-\realoptx)}{\realopty}+\eta_i\iprod{\kgrad{\thisx}(\nextx-\realoptx)}{\nexty}
		\\
		&\quad
		+\eta_{i+1}\iprod{K(\realoptx)-K(\overnextx)-\kgrad{\thisx}(\nextx-\overnextx)}{\nexty-\realopty}
		\\
		&\quad
		+ \iprod{(\eta_{i+1}\kgrad{\nextx}-\eta_i\kgrad{\thisx})(\nextx-\realoptx)}{\nexty-\realopty}
		\\
		&
		=
		\eta_i\norm{\nextx-\realoptx}_{\Gamma_G-\tilde\Gamma_G}^2+\eta_{i+1}\norm{\nexty-\realopty}_{\Gamma_{F^*}-\tilde\Gamma_{F^*}}^2
		\\
		&\quad
		+\eta_i\iprod{[\kgrad{\thisx}-\kgrad{\realoptx}](\nextx-\realoptx)}{\realopty}
		\\
		&\quad
		+\eta_{i+1}\iprod{K(\realoptx)-K(\nextx)-\kgrad{\nextx}(\realoptx-\nextx)}{\nexty-\realopty}
		\\
		&\quad
		+\eta_{i+1}\iprod{K(\nextx)-K(\overnextx)+\kgrad{\nextx}(\overnextx-\nextx)}{\nexty-\realopty}
		\\
		&\quad
		+\eta_{i+1}\iprod{(\kgrad{\thisx}-\kgrad{\nextx})(\overnextx-\nextx)}{\nexty-\realopty}.
		\end{split}
	\end{align*}
	Applying \cref{ass:k-lipschitz}, \eqref{eq:ass-k-lipschitz-2}, and $\overnextx-\nextx=\omega_i(\nextx-\thisx)$ to the last two terms, we obtain 
	\begin{align*}
		\iprod{K(\nextx)-K(\overnextx)+\kgrad{\nextx}(\overnextx-\nextx)}{\nexty-\realopty}
		&\ge -\frac{L\omega_i^2}{2}\norm{\nextx-\thisx}^2\norm{\nexty-\realopty}_{\Pnl}
		\\
                \shortintertext{and}
		\iprod{(\kgrad{\thisx}-\kgrad{\nextx})(\overnextx-\nextx)}{\nexty-\realopty}
		&\ge -L\omega_i\norm{\nextx-\thisx}^2\norm{\nexty-\realopty}_{\Pnl}.
	\end{align*}
	These estimates together with \eqref{eq:basic-step-rules} and $\nextu\in \neighu(\metricRhoY)$ now imply that
        \begin{equation}
            \label{eq:d-dk-estimate}
            D\ge\eta_iD^K_{i+1}+\eta_{i+1}\norm{\nexty-\realopty}_{\Gamma_{F^*}-\tilde\Gamma_{F^*}}^2
        \end{equation}
	for
	\begin{equation*}
        \begin{split}
		D^K_{i+1}
        & \defeq \iprod{[\kgrad{\thisx}-\kgrad{\realoptx}](\nextx-\realoptx)}{\realopty}
		+\norm{\nextx-\realoptx}_{\Gamma_G-\tilde\Gamma_G}^2-L(1+\omega_i/2)\metricRhoY\norm{\nextx-\thisx}^2
		\\
        \MoveEqLeft[-1]
		-\norm{\nexty-\realopty}_{\Pnl} \norm{K(\realoptx)-K(\nextx)-\kgrad{\nextx}(\realoptx-\nextx)}/\omega_i.
        \end{split}
	\end{equation*}
	Finally, we use \cref{ass:k-nonlinear} to estimate
	\begin{equation}
    	\label{eq:dk-estimate}
        \begin{aligned}[t]
		D^K_{i+1}
        &\ge\theta\norm{K(\realoptx)-K(\nextx)-\kgrad{\nextx}(\realoptx-\nextx)}^p-\frac{1}{2}\norm{\nextx-\thisx}^2_{\Lambda+L(2+\omega_i)\metricRhoY I}
		\\
        \MoveEqLeft[-1]
		-\norm{\nexty-\realopty}_{\Pnl} \norm{K(\realoptx)-K(\nextx)-\kgrad{\nextx}(\realoptx-\nextx)}/\omega_i.
        \end{aligned}
	\end{equation}
	We now use the following Young's inequality for any positive $a,b,p$ and $q$ such that $q^{-1}+p^{-1}=1$:
	\[
		ab=\left(ab^{\frac{2-p}{p}}\right)b^{2\frac{p-1}{p}}
		\le \frac{1}{p}\left(ab^{\frac{2-p}{p}}\right)^p+\frac{1}{q}b^{2\frac{p-1}{p}q}
		=\frac{1}{p}a^pb^{2-p}+\left(1-\frac{1}{p}\right)b^2.
	\]
	With this inequality applied to the last term of \eqref{eq:dk-estimate} for
        \[
            a=(\zeta p)^{-1/2} \norm{K(\realoptx)-K(\nextx)-\kgrad{\nextx}(\realoptx-\nextx)},\qquad
            b=(\zeta p)^{1/2}\norm{\nexty-\realopty}_{\Pnl},
        \]
	and any $\zeta>0$, we arrive at the estimate
	\begin{equation*}
		\begin{aligned}[t]
		D^K_{i+1}
		&\ge\theta\norm{K(\realoptx)-K(\nextx)-\kgrad{\nextx}(\realoptx-\nextx)}^p
		-\frac{1}{2}\norm{\nextx-\thisx}^2_{\Lambda+L(2+\omega_i)\metricRhoY I}
		\\
		\MoveEqLeft[-1]
		-\frac{\norm{\nexty-\realopty}_{\Pnl}^{2-p}}{p^p\omega_i\zeta^{p-1}}\norm{K(\realoptx)-K(\nextx)-\kgrad{\nextx}(\realoptx-\nextx)}^p 
		-\frac{p-1}{\omega_i}\zeta \norm{\nexty-\realopty}_{\Pnl}^2.
		\end{aligned}
	\end{equation*}
	Now observe that $\theta-\norm{\nexty-\realopty}_{\Pnl}^{2-p}(p^p\omega_i\zeta^{p-1})^{-1}\ge\theta-\metricRhoY^{2-p}(p^p\omega_i\zeta^{p-1})^{-1}\ge0$. Therefore
	\begin{equation}
		\label{eq:dk-estimate-p}
		D^K_{i+1} \ge
		-\frac{1}{2}\norm{\nextx-\thisx}^2_{\Lambda+L(2+\omega_i)\metricRhoY I}
		-\frac{p-1}{\omega_i}\zeta \norm{\nexty-\realopty}_{\Pnl}^2.
	\end{equation}	
	Combining this with \eqref{eq:d-dk-estimate} we finally obtain
	\[
		D\ge\norm{\nexty-\realopty}_{\eta_{i+1}[\Gamma_{F^*}-\tilde\Gamma_{F^*}-(p-1)\zeta\Pnl]}^2
		-\frac{1}{2}\norm{\nextx-\thisx}^2_{\eta_i[\Lambda+L(2+\omega_i)\metricRhoY I]},
	\]
    which was our claim.
\end{proof}

We now have all the necessary tools in hand to formulate the main estimate. 
\begin{theorem}
	\label{thm:nonneg-penalty-nlpdhgm}
	Fix $i \in \N$, and suppose \eqref{eq:basic-step-rules} and \cref{ass:k-lipschitz} (locally Lipschitz $\grad K$), \cref{ass:gf} (monotone $\subdiff G$ and $\subdiff F^*$), and \cref{ass:k-nonlinear} (three-point condition on $K$) hold.
	Also suppose $\overnextx \in \neighx_K$ and that $\thisu, \nextu\in \neighu(\metricRhoY)$ for some $\metricRhoY \ge 0$. Furthermore, for $0\le\delta\le\kappa<1$ define
	\[
		S_{i+1} \defeq
		\begin{psmallmatrix}
		\delta\TauTest_i  
		-\eta_i[\Lambda+L(2+\omega_i)\metricRhoY I]
		& 0 \\
		0 & \SigmaTest_{i+1}- \frac{\eta^2_i}{1-\kappa}\kgrad{\thisx}\inv\TauTest_i\kgradconj{\thisx}
		\end{psmallmatrix}.
	\]
	Finally, suppose \cref{ass:k-nonlinear} holds with $\theta\ge\metricRhoY^{2-p}p^{-p}\omega_i^{-1}\zeta^{1-p}$ for some $\zeta>0$.
	Then \eqref{eq:convergence-fundamental-condition-iter-h} is satisfied (for this $i$) if
        \begin{equation}\label{eq:penalty-nlpdhgm}
            \frac{1}{2}\norm{\nextu-\thisu}_{S_{i+1}}^2+\norm{\nexty-\realopty}_{\eta_{i+1}[\Gamma_{F^*}-\tilde\Gamma_{F^*}-(p-1)\zeta\Pnl]}^2 \ge - \Penalty_{i+1}.
        \end{equation}
	In particular, under the above assumptions, we may take $\Penalty_{i+1}=0$ in \eqref{eq:convergence-fundamental-condition-iter-h} provided
	\begin{subequations}
		\label{eq:zero-penalty-rules}
		\begin{align}
		\label{eq:zero-penalty-rules-tautest}
		\TauTest_i & \ge \eta_i\delta^{-1}[\Lambda+L(2+\omega_i)\metricRhoY I],
		\\
		\label{eq:zero-penalty-rules-sigmatest}
		\SigmaTest_{i+1} & \ge \frac{\eta_i^2}{1-\kappa}\kgrad{\thisx}\inv\TauTest_i\kgradconj{\thisx},
		\quad\text{and} 
		\\
		\label{eq:zero-penalty-rules-fstar}
		\Gamma_{F^*} & \ge \tilde\Gamma_{F^*}+(p-1)\zeta\Pnl.
		\end{align}
	\end{subequations}
\end{theorem}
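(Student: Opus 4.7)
The plan is to combine \cref{lemma:zimi-estim,lemma:local-metric-transfer,lemma:nonlinear-preconditioner-estimate} to verify \eqref{eq:convergence-fundamental-condition-iter-h}. Applying \cref{lemma:nonlinear-preconditioner-estimate} yields
\begin{equation*}
    \iprod{\tilde H_{i+1}(\nextu)}{\nextu-\realoptu}_{\Test_{i+1}\Precond_{i+1}} \ge \tfrac{1}{2}\norm{\nextu-\realoptu}^2_{\Test_{i+1}\GammaLift{i+1}} + \norm{\nexty-\realopty}^2_{\eta_{i+1}[\Gamma_{F^*}-\tilde\Gamma_{F^*}-(p-1)\zeta\Pnl]} - \tfrac{1}{2}\norm{\nextx-\thisx}^2_{\eta_i[\Lambda+L(2+\omega_i)\metricRhoY I]},
\end{equation*}
and \cref{lemma:local-metric-transfer} identifies $\Test_{i+1}\GammaLift{i+1}$ with $\Test_{i+2}\Precond_{i+2}-\Test_{i+1}\Precond_{i+1}$ as quadratic forms, so that matching the resulting bound against the right-hand side of \eqref{eq:convergence-fundamental-condition-iter-h} reduces the claim to showing
\begin{equation*}
    \tfrac{1}{2}\norm{\nextu-\thisu}^2_{\Test_{i+1}\Precond_{i+1}} - \tfrac{1}{2}\norm{\nextx-\thisx}^2_{\eta_i[\Lambda+L(2+\omega_i)\metricRhoY I]} + \norm{\nexty-\realopty}^2_{\eta_{i+1}[\Gamma_{F^*}-\tilde\Gamma_{F^*}-(p-1)\zeta\Pnl]} \ge -\Penalty_{i+1}.
\end{equation*}

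The key remaining step---and the one I expect to be the main obstacle---is to absorb the negative primal remainder into $\tfrac{1}{2}\norm{\nextu-\thisu}^2_{S_{i+1}}$ via the operator inequality
\begin{equation*}
    \Test_{i+1}\Precond_{i+1} \ge \begin{pmatrix} \delta\TauTest_i & 0 \\ 0 & \SigmaTest_{i+1}-\tfrac{\eta_i^2}{1-\kappa}\kgrad{\thisx}\inv\TauTest_i\kgradconj{\thisx} \end{pmatrix}.
\end{equation*}
This is a modest refinement of \cref{lemma:zimi-estim}: the intermediate Cauchy/Schur estimate $\Test_{i+1}\Precond_{i+1}\ge\begin{pmatrix}\delta\TauTest_i & 0\\0 & \SigmaTest_{i+1}-\tfrac{\eta_i^2}{1-\delta}\kgrad{\thisx}\inv\TauTest_i\kgradconj{\thisx}\end{pmatrix}$ that appears in the proof of \cref{lemma:zimi-estim} holds for every $\delta\in[0,1)$ prior to invoking \eqref{eq:basic-step-rules}; and since $1/(1-\delta)\le 1/(1-\kappa)$ whenever $\delta\le\kappa$, the desired form follows. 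Subtracting the diagonal block $\begin{pmatrix}\eta_i[\Lambda+L(2+\omega_i)\metricRhoY I]&0\\0&0\end{pmatrix}$ from both sides gives $\tfrac{1}{2}\norm{\nextu-\thisu}^2_{S_{i+1}}\le\tfrac{1}{2}\norm{\nextu-\thisu}^2_{\Test_{i+1}\Precond_{i+1}}-\tfrac{1}{2}\norm{\nextx-\thisx}^2_{\eta_i[\Lambda+L(2+\omega_i)\metricRhoY I]}$, so the reduced condition becomes precisely \eqref{eq:penalty-nlpdhgm}, proving the first claim.

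For the second claim, $\Penalty_{i+1}=0$ requires both summands on the left of \eqref{eq:penalty-nlpdhgm} to be individually non-negative. Non-negativity of the dual quadratic form is exactly \eqref{eq:zero-penalty-rules-fstar}, while non-negativity of the block-diagonal $S_{i+1}$ splits into the primal bound \eqref{eq:zero-penalty-rules-tautest} and the dual bound \eqref{eq:zero-penalty-rules-sigmatest}.
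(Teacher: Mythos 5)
Your proposal is correct and follows essentially the same route as the paper: it combines \cref{lemma:nonlinear-preconditioner-estimate}, \cref{lemma:local-metric-transfer}, and the Cauchy/Schur estimate behind \cref{lemma:zimi-estim} (valid for any $\delta\in[0,1)$, weakened via $\delta\le\kappa$ to the $1/(1-\kappa)$ form) to absorb the remainder into $\norm{\nextu-\thisu}_{S_{i+1}}^2$, exactly as the paper does, merely permuting the order of these steps. The only nitpick is the phrase that $\Penalty_{i+1}=0$ ``requires'' both summands in \eqref{eq:penalty-nlpdhgm} to be nonnegative: the conditions \eqref{eq:zero-penalty-rules} are sufficient for this, which is all that is claimed and all your argument actually uses.
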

\begin{proof}
    Using the definition of $S_{i+1}$ and \eqref{eq:test-precond-expansion}, we have that
    \[
        \frac{1}{2}\norm{\nextu-\thisu}_{S_{i+1}}^2
        \le\frac{1}{2}\norm{\nextu-\thisu}_{\Test_{i+1} \Precond_{i+1}}^2
        -\frac{1}{2}\norm{\nextx-\thisx}^2_{\eta_i[\Lambda+L(2+\omega_i)\metricRhoY I]}.
    \]
    Since \cref{ass:k-nonlinear} holds with $\theta\ge\metricRhoY^{2-p}p^{-p}\omega_i^{-1}\zeta^{1-p}$ for some $\zeta>0$, we can apply \cref{lemma:nonlinear-preconditioner-estimate} to further bound
    \begin{multline*}
        \frac{1}{2}\norm{\nextu-\thisu}_{S_{i+1}}^2	+\norm{\nexty-\realopty}_{\eta_{i+1}[\Gamma_{F^*}-\tilde\Gamma_{F^*}-(p-1)\zeta\Pnl]}^2
        \\
        \le\frac{1}{2}\norm{\nextu-\thisu}_{\Test_{i+1} \Precond_{i+1}}^2
        +\iprod{\tilde H_{i+1}(\nextu)}{\nextu-\realoptu}_{\Test_{i+1}\Precond_{i+1}}
        -\frac{1}{2}\norm{\nextu-\realoptu}_{\Test_{i+1}\GammaLift{i+1}}^2.
    \end{multline*}
    Using \cref{lemma:local-metric-transfer}, we may insert 
    $\norm{\nextu-\realoptu}_{\Test_{i+1}\GammaLift{i+1}}^2=\norm{\nextu-\realoptu}_{\Test_{i+2}\Precond_{i+2}-\Test_{i+1}\Precond_{i+1}}^2$ and use \eqref{eq:penalty-nlpdhgm} to obtain
    \begin{equation*}
        \begin{split}
            - \Penalty_{i+1}
            &\le\frac{1}{2}\norm{\nextu-\thisu}_{S_{i+1}}^2    +\norm{\nexty-\realopty}_{\eta_{i+1}[\Gamma_{F^*}-\tilde\Gamma_{F^*}-(p-1)\zeta\Pnl]}^2
            \\
            &\le\frac{1}{2}\norm{\nextu-\thisu}_{\Test_{i+1} \Precond_{i+1}}^2
            +\iprod{\tilde H_{i+1}(\nextu)}{\nextu-\realoptu}_{\Test_{i+1}\Precond_{i+1}}
            \\ \MoveEqLeft[-1]
            -\frac{1}{2}\norm{\nextu-\realoptu}_{\Test_{i+2}\Precond_{i+2}-\Test_{i+1}\Precond_{i+1}}^2.
        \end{split}
    \end{equation*}
    Rearranging the terms, we arrive at
    \begin{equation*}
        \iprod{\tilde H_{i+1}(\nextu)}{\nextu-\realoptu}_{\Test_{i+1}\Precond_{i+1}}
        \ge
        \frac{1}{2}\norm{\nextu-\realoptu}_{\Test_{i+2}\Precond_{i+2}-\Test_{i+1}\Precond_{i+1}}^2
        -\frac{1}{2}\norm{\nextu-\thisu}_{\Test_{i+1} \Precond_{i+1}}^2
        -\Penalty_{i+1}.
    \end{equation*}
    Hence, \eqref{eq:convergence-fundamental-condition-iter-h} is satisfied.

    Finally, if in addition the relations \eqref{eq:zero-penalty-rules} are satisfied, then the left-hand side of \eqref{eq:penalty-nlpdhgm} is trivially bounded from below by zero.
\end{proof}

We close this section with some remarks on the conditions \eqref{eq:zero-penalty-rules}:
\begin{itemize}
    \item While \eqref{eq:zero-penalty-rules-tautest} and \eqref{eq:zero-penalty-rules-sigmatest} are stated in terms of $\TauTest_i$ and $\SigmaTest_{i+1}$, they actually provide bounds on the step length operators $\Tau_i$ and $\Sigma_i$: Since $\eta_i I =\TauTest_i\Tau_i=\SigmaTest_i\Sigma_i$ by \eqref{eq:basic-step-rules}, $\TauTest_i$ and $\SigmaTest_{i+1}$ can be eliminated from \eqref{eq:zero-penalty-rules}, and we will do so for scalar step lengths in \cref{sec:scalar}. Thus, while \eqref{eq:basic-step-rules} provides valid update rules for the parameters in \cref{alg:NL-PDHGM}, \eqref{eq:zero-penalty-rules} will provide upper bounds on step lengths under which convergence can be proven.

    \item If $K$ is linear, \eqref{eq:zero-penalty-rules-tautest} reduces to $\TauTest_i \ge 0$ since $\Pnl=0$ and hence $\metricRhoY=0$ and $\Lambda=0$. We can thus take $\kappa=0$, so that \eqref{eq:zero-penalty-rules-sigmatest} turns into an operator analogue of the step length bound $\tau_i\sigma_i \norm{K}^2 < 1$ of \cite{chambolle2010first}. 

    \item Recall from \eqref{eq:neighu-definition} that $\metricRhoY$ only bounds the dual variable on the subspace $\Ynl$. Therefore, most of the requirements for convergence introduced in \cref{sec:scalar-convergence} to account for nonlinear $K$  (e.g., upper bounds on the primal step length, initialization of the dual variable close to a critical point, or the strong convexity of $F^*$ at $\realopty$) will only be required with respect to $\Ynl$. 

    \item Comparing \eqref{eq:zero-penalty-rules} with the requirements of \cite{tuomov-nlpdhgm}, a crucial difference is that in \eqref{eq:zero-penalty-rules-fstar}, $\Gamma_{F^*}$ is allowed to be zero when $p=1$ and hence we do not require strong convexity from $F^*$; see also \cite{tuomov-pdex2nlpdhgm}.
        In fact, for $p=1$ the inequality on $\theta$ in \cref{thm:nonneg-penalty-nlpdhgm} reduces to $\metricRhoY\le\omega_i\theta$. We therefore only need to ensure that the dual variable is initialized close to a critical point within the subspace $\Ynl$.
        If $p\in(1,2]$, the factor $\theta$ imposes a lower bound on the dual factor of strong monotonicity over $\Ynl$. Indeed, the minimal $\zeta$ allowed in \cref{thm:nonneg-penalty-nlpdhgm} is given by
        $\zeta=\metricRhoY^{(2-p)/(p-1)}(p^{p}\omega_i\theta)^{-1/(p-1)}$, and \eqref{eq:zero-penalty-rules-fstar} requires that the factor of strong convexity of $F^*$ at $\realopty$ with respect to the subspace $\Ynl$ is not smaller than this $\zeta$.
    \item Finally, while \eqref{eq:zero-penalty-rules} says nothing about $\Gamma_G$ or $\tilde{\Gamma}_G$, the discussion in and after \cref{ex:0d-dual} indicates that the solution $\realoptx$ to $G(\freevar)+\iprod{K(\freevar)}{\realopty}$ should satisfy a ``nonsmooth'' second-order growth condition to compensate for the nonlinearity of $K$. Therefore, $\tilde{\Gamma}_G$ is implicitly bounded from above by the strong convexity factor of the primal problem in \cref{ass:k-nonlinear}.
\end{itemize}

\subsection{Local step length bounds}
\label{sec:locality}

In order to apply  \cref{lemma:nonlinear-preconditioner-estimate} and therefore \cref{thm:nonneg-penalty-nlpdhgm}, we need one final technical result to ensure that the new iterates $\nextu$ remain in the local neighborhood $\neighu(\metricRhoY)$ of $\realoptu$. The following lemma provides the basis from which we further work in \cref{sec:neighborhood-compatible-iterations} and puts a limit on how far the next iterate can escape from a given neighborhood of $\realoptu$ in terms of bounds on the step lengths. 
\begin{lemma}
	\label{lemma:step-length-bounds-r}
	Fix $i \in \N$.
	Suppose  \cref{ass:k-lipschitz} (locally Lipschitz $\grad K$) holds and $\nextu$ solves \eqref{eq:ppext}.
	For simplicity, assume $\omega_i \le 1$.
	For some $\localRhoX[i], \localRhoY  >0$, and $\delta_{x,i},\delta_y\ge0$, let $\B(\realoptx, \localRhoX[i]+\delta_{x,i}) \subset \neighx_K$,  $\thisx\in\B(\realoptx, \localRhoX[i])$, and  $\thisy\in\B(\realopty, \localRhoY)$.
        If
	\begin{equation}
		\label{eq:step-length-bounds-r}
		\norm{\Tau_i}\le\frac{\delta_{x,i}/2}{\norm{\kgrad{\thisx}}\localRhoY +L\norm{\Pnl \realopty}\localRhoX[i]},
		\quad\text{and}\quad
		\norm{\Sigma_{i+1}}\le\frac{2\delta_{y}(\localRhoX[i]+\delta_{x,i})^{-1}}{L(\localRhoX[i]+\delta_{x,i})+2\norm{\kgrad{\realoptx}}},
	\end{equation}
        then	
	$\nextx,\overnextx\in\B(\realoptx, \localRhoX[i]+\delta_{x,i})$ and $\nexty\in\B(\realopty, \localRhoY+\delta_{y})$.
\end{lemma}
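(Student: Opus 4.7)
The strategy is to compare each update in \cref{alg:NL-PDHGM} with the resolvent fixed-point identities that $\realoptu$ satisfies by virtue of $0 \in H(\realoptu)$. Specifically, $-\kgradconj{\realoptx}\realopty \in \subdiff G(\realoptx)$ gives $\realoptx = (I+\Tau_i\subdiff G)^{-1}(\realoptx - \Tau_i \kgradconj{\realoptx}\realopty)$, and $K(\realoptx)\in\subdiff F^*(\realopty)$ gives $\realopty = (I+\Sigma_{i+1}\subdiff F^*)^{-1}(\realopty + \Sigma_{i+1} K(\realoptx))$. Because both resolvents are $1$-Lipschitz, subtracting them from the algorithm updates and applying the triangle inequality reduces each of the two target bounds to a product of a step length and a norm of a difference of forward terms.

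For the primal iterate, nonexpansiveness yields
\[
  \norm{\nextx - \realoptx} \le \norm{\thisx - \realoptx} + \norm{\Tau_i}\cdot\norm{\kgradconj{\thisx}\thisy - \kgradconj{\realoptx}\realopty}.
\]
The key technical step is bounding the gradient difference. I split it as $\kgradconj{\thisx}(\thisy-\realopty) + (\kgradconj{\thisx}-\kgradconj{\realoptx})\realopty$, and observe that for $y\in\Ylin$ the map $x\mapsto\kgradconj{x}y$ is constant in $x$, so $(\kgradconj{\thisx}-\kgradconj{\realoptx})\realopty = (\kgradconj{\thisx}-\kgradconj{\realoptx})\Pnl\realopty$. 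Applying \cref{ass:k-lipschitz} together with $\norm{\thisx-\realoptx}\le\localRhoX[i]$ and $\norm{\thisy-\realopty}\le\localRhoY$ produces
\[
  \norm{\kgradconj{\thisx}\thisy - \kgradconj{\realoptx}\realopty} \le \norm{\kgrad{\thisx}}\localRhoY + L\norm{\Pnl\realopty}\localRhoX[i],
\]
and the hypothesized bound on $\norm{\Tau_i}$ then forces $\norm{\nextx-\realoptx} \le \localRhoX[i]+\delta_{x,i}/2$. For $\overnextx$, I use the affine decomposition $\overnextx - \realoptx = (1+\omega_i)(\nextx-\realoptx) - \omega_i(\thisx-\realoptx)$ together with $\omega_i\le 1$ and the triangle inequality to place $\overnextx$ in $\B(\realoptx,\localRhoX[i]+\delta_{x,i})$.

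For the dual iterate, nonexpansiveness analogously produces
\[
  \norm{\nexty - \realopty} \le \norm{\thisy-\realopty} + \norm{\Sigma_{i+1}}\cdot\norm{K(\overnextx) - K(\realoptx)}.
\]
I then write $K(\overnextx) - K(\realoptx) = \kgrad{\realoptx}(\overnextx-\realoptx) + r$, where the mean-value formula together with \cref{ass:k-lipschitz} controls $\norm{r}\le(L/2)\norm{\overnextx-\realoptx}^2$; using the primal bound $\norm{\overnextx-\realoptx}\le\localRhoX[i]+\delta_{x,i}$ gives
\[
  \norm{K(\overnextx)-K(\realoptx)} \le (\localRhoX[i]+\delta_{x,i})\bigl(\norm{\kgrad{\realoptx}} + (L/2)(\localRhoX[i]+\delta_{x,i})\bigr),
\]
after which the step-length bound on $\norm{\Sigma_{i+1}}$ yields $\norm{\nexty-\realopty}\le\localRhoY+\delta_y$. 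The main subtlety lies in handling the nonlinearity of $K$: in the primal, the $\Ylin/\Ynl$ splitting is essential so that only $\norm{\Pnl\realopty}$ (and not the potentially large $\norm{\realopty}$) enters the step-length constraint; in the dual, the quadratic Lipschitz remainder from \cref{ass:k-lipschitz} must be absorbed into the $\Sigma_{i+1}$ bound via the factor $(\localRhoX[i]+\delta_{x,i})$ that appears in its denominator, which forces the step length to shrink as one asks the neighborhood for $\overnextx$ to grow.
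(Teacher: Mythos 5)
Your primal and dual estimates are essentially the paper's argument in resolvent form (the paper derives the same bounds by testing \eqref{eq:ppext} with the primal respectively dual block and using the three-point Pythagoras identity plus monotonicity, rather than invoking nonexpansiveness), and your treatment of $C_x$ via the $\Ylin$/$\Ynl$ splitting and of $C_y$ via the mean-value remainder matches \eqref{eq:cx-estimate}--\eqref{eq:cy-estimate}. However, there is a genuine gap at the over-relaxed iterate. From your estimates you only know $\norm{\nextx-\realoptx}\le \localRhoX[i]+\delta_{x,i}/2$ and $\norm{\thisx-\realoptx}\le\localRhoX[i]$, and the plain triangle inequality applied to $\overnextx-\realoptx=(1+\omega_i)(\nextx-\realoptx)-\omega_i(\thisx-\realoptx)$ gives at best
\[
\norm{\overnextx-\realoptx}\le(1+\omega_i)\bigl(\localRhoX[i]+\tfrac{\delta_{x,i}}{2}\bigr)+\omega_i\localRhoX[i],
\]
which for $\omega_i=1$ is $3\localRhoX[i]+\delta_{x,i}$ and exceeds the required radius $\localRhoX[i]+\delta_{x,i}$ whenever $\localRhoX[i]>0$. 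Since your dual bound relies on $\norm{\overnextx-\realoptx}\le\localRhoX[i]+\delta_{x,i}$ (both to stay in $\neighx_K$ and to control $\norm{K(\overnextx)-K(\realoptx)}$), this gap propagates to the dual conclusion as well.

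The missing ingredient is the extra information that plain nonexpansiveness discards. The paper keeps the cross terms: testing the primal inclusion and using the three-point identity yields
\[
\norm{\nextx-\thisx}^2+\norm{\nextx-\realoptx}^2\le 2C_x\norm{\nextx-\realoptx}+\norm{\thisx-\realoptx}^2,
\]
(in your language this is exactly what \emph{firm} nonexpansiveness of the resolvent gives, rather than mere $1$-Lipschitzness), and combines it with the exact expansion
\[
\norm{\overnextx-\realoptx}^2=(1+\omega_i)\norm{\nextx-\realoptx}^2+\omega_i(1+\omega_i)\norm{\nextx-\thisx}^2-\omega_i\norm{\thisx-\realoptx}^2,
\]
so that, using $\omega_i\le1$ and $C_x\le\delta_{x,i}/2$,
\[
\norm{\overnextx-\realoptx}^2\le 4C_x\norm{\nextx-\realoptx}+\norm{\thisx-\realoptx}^2\le 4C_x(\localRhoX[i]+C_x)+\localRhoX[i]^2\le(\localRhoX[i]+\delta_{x,i})^2.
\]
If you upgrade your resolvent step to firm nonexpansiveness (or reproduce the testing/three-point computation), the rest of your argument goes through; as written, the claim for $\overnextx$, and hence the bound on $\nexty$, is not established.
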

\begin{proof}
	We want to show that the step length conditions \eqref{eq:step-length-bounds-r} imply that
	\[
		\norm{\nextx-\realoptx}\le \localRhoX[i]+\delta_{x,i},
		\quad
		\norm{\overnextx-\realoptx}\le \localRhoX[i]+\delta_{x,i},
		\quad\text{and}\quad
		\norm{\nexty-\realopty}\le \localRhoY+\delta_{y}.
	\]
	We do this by applying the testing argument on the primal and dual variables separately.
	Multiplying \eqref{eq:ppext} by $\Test_{i+1}^*(\nextu-\realoptu)$ with $\TauTest_i=I$ and $\SigmaTest_{i+1}=0$, we get
	\[
		0 \in \iprod{\subdiff G(\nextx)+\kgradconj{\thisx}\thisy}{\nextx-\realoptx}_{\Tau_i}
		+
		\iprod{\nextx-\thisx}{\nextx-\realoptx}.
	\]
	Using the three-point version of Pythagoras' identity,
	\begin{equation}
		\label{eq:standard-identity}
		\iprod{\nextx-\thisx}{\nextx-\realoptx}
		= \frac{1}{2}\norm{\nextx-\thisx}^2
		- \frac{1}{2}\norm{\thisx-\realoptx}^2
		+ \frac{1}{2}\norm{\nextx-\realoptx}^2,
	\end{equation}
	we obtain
	\[
		\norm{\thisx-\realoptx}^2 \in 2\iprod{\subdiff G(\nextx)+\kgradconj{\thisx}\thisy}{\nextx-\realoptx}_{\Tau_i}
		+
		\norm{\nextx-\thisx}^2
		+\norm{\nextx-\realoptx}^2.
	\]
	Using $0 \in \subdiff G(\realoptx) + \kgradconj{\realoptx}\realopty$ and the monotonicity of $\subdiff G$, we then arrive at
	\[
		\norm{\nextx-\thisx}^2 + \norm{\nextx-\realoptx}^2 +
		2\iprod{\kgradconj{\thisx}\thisy-\kgradconj{\realoptx}\realopty}{\nextx-\realoptx}_{\Tau_i}\le
		\norm{\thisx-\realoptx}^2.
	\]
	With $C_x\defeq \norm{\kgradconj{\thisx}\thisy-\kgradconj{\realoptx}\realopty}_{\Tau_i^2}$, this implies that
	\begin{equation}
		\label{eq:step-length-estimate-raw}
		\norm{\nextx-\thisx}^2 + \norm{\nextx-\realoptx}^2\le
		2C_x\norm{\nextx-\realoptx}+\norm{\thisx-\realoptx}^2.
	\end{equation}
	Rearranging the terms and using $\norm{\nextx-\realoptx}\le\norm{\nextx-\thisx}+\norm{\thisx-\realoptx}$ yields
	\[
		(\norm{\nextx-\thisx}-C_x)^2  + \norm{\nextx-\realoptx}^2\le (\norm{\thisx-\realoptx}+C_x)^2,
	\]
        which further leads to
	\begin{equation}
		\label{eq:nextx-realoptx-from-thisx-realoptx}
		\norm{\nextx-\realoptx}\le\norm{\thisx-\realoptx}+C_x.
	\end{equation}
	To estimate the dual variable, we multiply \eqref{eq:ppext} by $\Test_{i+1}^*(\nextu-\realoptu)$ with $\TauTest_{i}=0,\SigmaTest_{i+1}=I$, yielding
	\[
		0 \in \iprod{\subdiff F^*(\nexty)-K(\overnextx)}{\nexty-\realopty}_{\Sigma_{i+1}}+\iprod{\nexty-\thisy}{\nexty-\realopty}.
	\]
	Using $0 \in \subdiff F^*(\realopty) - K(\realoptx)$ and following the steps leading to \eqref{eq:nextx-realoptx-from-thisx-realoptx}, we deduce
	\begin{equation}
		\label{eq:nexty-realoptx-from-thisx-realoptx}
		\norm{\nexty-\realopty} \le \norm{\thisy-\realopty}+C_y
	\end{equation}
	with $C_y\defeq \norm{K(\realoptx)-K(\overnextx)}_{\Sigma_{i+1}^2}$.
	
	We now proceed to deriving bounds on $C_x$ and $C_y$ with the goal of bounding \eqref{eq:nextx-realoptx-from-thisx-realoptx} and \eqref{eq:nexty-realoptx-from-thisx-realoptx} from above.
	Using \cref{ass:k-lipschitz} and arguing as in \eqref{eq:ass-k-lipschitz-2}, we estimate
	\begin{align}
		\label{eq:cx-estimate}
		C_x&\le\norm{\Tau_i}(\norm{\kgrad{\thisx}}\norm{\thisy-\realopty}+L\norm{\Pnl \realopty}\norm{\thisx-\realoptx}) =: R_x,\\
                \intertext{and, if $\overnextx\in\neighx_K$,}
		\label{eq:cy-estimate}
		C_y&\le\norm{\Sigma_{i+1}}(L\norm{\overnextx-\realoptx}/2+\norm{\kgrad{\realoptx}})\norm{\overnextx-\realoptx} =: R_y.
	\end{align}
        We thus need to verify first that $\overnextx\in\neighx_K$. By definition,
	\begin{equation*}
	\begin{split}
		\norm{\overnextx-\realoptx}^2&=\norm{\nextx-\realoptx+\omega_i(\nextx-\thisx)}^2\\
		&=\norm{\nextx-\realoptx}^2+\omega_i^2\norm{\nextx-\thisx}^2+2\omega_i\iprod{\nextx-\realoptx}{\nextx-\thisx}\\
		&=(1+\omega_i)\norm{\nextx-\realoptx}^2+\omega_i(1+\omega_i)\norm{\nextx-\thisx}^2-\omega_i\norm{\thisx-\realoptx}^2\\
		&\le(1+\omega_i)(\norm{\nextx-\realoptx}^2+\norm{\nextx-\thisx}^2)-\omega_i\norm{\thisx-\realoptx}^2.
	\end{split}
	\end{equation*}
    Now, the bound \eqref{eq:step-length-bounds-r} on $\Tau_i$ together with the definition of $R_x$ implies that $C_x \leq R_x\leq \delta_{x,i}/2$.
	Applying \eqref{eq:step-length-estimate-raw} and \eqref{eq:nextx-realoptx-from-thisx-realoptx}, we obtain from this that
	\[
		\norm{\overnextx-\realoptx}^2\le 4C_x\norm{\nextx-\realoptx}+\norm{\thisx-\realoptx}^2
                \le 4C_x(\localRhoX[i]+C_x)+\localRhoX[i]^2 \leq (\localRhoX[i]+\delta_{x,i})^2.
	\]
	In addition, \eqref{eq:nextx-realoptx-from-thisx-realoptx} shows that $\norm{\nextx-\realoptx}\le \localRhoX[i]+\delta_{x,i}$ as well.
    Similarly, the bound \eqref{eq:step-length-bounds-r} on $\Sigma_{i+1}$ implies that $C_y\leq R_y\leq \delta_y$, and hence \eqref{eq:nexty-realoptx-from-thisx-realoptx} and \eqref{eq:cy-estimate} lead to $\norm{\nexty-\realopty}\le \localRhoY+\delta_{y}$, completing the proof.
\end{proof}
Note that only the radii $\localRhoX[i]$, $\localRhoX[i]+\delta_{x,i}$ of the primal neighborhoods depend on the iteration, and we will later seek to control these based on actual convergence estimates.

\begin{remark}
	\label{rmk:no-locality}
	Suppose that $\neighx_K=\neighx_G=X$. Then we can take $\delta_{x,i}$ arbitrarily large in order to satisfy the bound on $\Tau_i$ while still satisfying $\B(\realoptx, \localRhoX[i]+\delta_{x,i}) \subset \neighx_K$.
    On the other hand, the bound on $\Sigma_{i+1}$ will go to zero as $\delta_{x,i}\to \infty$, which seems at first very limiting. However, we observe from the proof that this bound is actually not required to satisfy $\nextx,\overnextx\in\B(\realoptx, \localRhoX[i]+\delta_{x,i})$. Furthermore, if $\Dom F^*$ is bounded and we take $\localRhoY$ large enough that $\Dom F^*\subseteq \B(\realopty, \localRhoY)$, the property $\nexty\in\B(\realopty, \localRhoY+\delta_{y})$ is automatically satisfied by the iteration and does not require dual step length bounds.
    Hence if $\neighx_K=\neighx_G=X$ and $\Dom F^*$ is bounded, we can expect global convergence.  We will return to this topic in \cref{rmk:global-convergence}.
\end{remark}

\section{Refinement to scalar step lengths}
\label{sec:scalar}
To derive convergence rates, we now simplify \cref{thm:nonneg-penalty-nlpdhgm} to scalar step lengths. Specifically, we assume for some scalars $\gamma_G,\gamma_{\LIN},\gamma_{\NL}, \tau_i, \tauTest_i, \sigma_i, \sigmaTest_i \ge 0$, and $\lambda \in \R$ the structure
\begin{equation}
\label{eq:scalar-step-lengths}
\left\{
\begin{aligned}
    \Tau_i & = \tau_i I,
    &
    \TauTest_i & = \tauTest_i I,
    &
    \Gamma_G & = \gamma_G I,
    &
    &&&
	\\
	\Sigma_i & = \sigma_{i} I,
	&
	\SigmaTest_i & = \sigmaTest_{i} I,
	&
	\Gamma_{F^*} & =  \gamma_{\LIN}P_\LIN+\gamma_{\NL}\Pnl,
	&
	\text{and}&
	&
    \Lambda & = \lambda I.
\end{aligned}
\right.
\end{equation}
Consequently, the preconditioning, step length, and testing operators simplify to
\begin{align*}
    \Precond_{i+1} & \defeq
    \begin{pmatrix}
        I & -\tau_i \kgradconj{\thisx} \\
        -\omega_i \sigma_{i+1} \kgrad{\thisx} & I
    \end{pmatrix},\\
    \Step_{i+1} &\defeq
    \begin{pmatrix}
        \tau_i I & 0 \\
        0 & \sigma_{i+1} I
    \end{pmatrix}
    \quad\text{and}\quad
    \Test_{i+1} \defeq
    \begin{pmatrix}
        \tauTest_i I & 0 \\
        0 & \sigmaTest_{i+1} I
    \end{pmatrix}.
\end{align*}
This reduces \eqref{eq:ppext} to \cref{alg:NL-PDHGM}, which for convex, proper, lower semicontinuous $G$ and $F^*$ is always solvable for the iterates $\{\thisu \defeq (\thisx, \thisy)\}_{i \in \N}$. Before proceeding to the main results, we state next all our assumptions in scalar form. We then derive in \cref{sec:scalar-convergence} our main convergence rates results for \cref{alg:NL-PDHGM} under specific update rules depending on monotonicity properties of $G$ and $F^*$. The final \cref{sec:neighborhood-compatible-iterations} is devoted to giving sufficient conditions for the scalar version of the assumptions of \cref{lemma:step-length-bounds-r} to hold.

\subsection{General derivations and assumptions}
\label{sec:scalar-general}

Under the setup \eqref{eq:scalar-step-lengths}, the update rules \eqref{eq:basic-step-rules} and the conditions \eqref{eq:zero-penalty-rules} simplify to
\begin{subequations}
\label{eq:scalar-rules}
\allowdisplaybreaks
\begin{align}
    \label{eq:scalar-step-rules0}
    \omega_i & = \eta_i/\eta_{i+1}, 
    &
    \eta_i & = \sigmaTest_{i}\sigma_{i} = \tauTest_i\tau_i,
    \\
    \label{eq:scalar-test-update}
    \tauTest_{i+1} & =\tauTest_i(1+2\tau_i\tilde\gamma_G),
    &
    \sigmaTest_{i+2} & =\sigmaTest_{i+1}(1+2\sigma_{i+1}\tilde\gamma_{F^*}),
    \\
    \label{eq:scalar-step-rules-test}
    \tauTest_i & \ge \eta_i\delta^{-1}(\lambda+(\omega_i+2)L\metricRhoY),
    &
	\sigmaTest_{i+1} & \ge \frac{\eta_i^2 \inv \tauTest_i}{1-\kappa} \norm{\grad K(\thisx)}^2,
    \\
    \label{eq:scalar-gamma-rules}
    \gamma_{\LIN} & \ge \tilde\gamma_{F^*}, 
    &
    \gamma_{\NL}& \ge \tilde\gamma_{F^*}+(p-1)\zeta,
\end{align}
\end{subequations}
for some $\eta_i>0$, $p\in[1,2]$, $\zeta>0$, $0\leq \delta\leq \kappa<1$, and $\tilde \gamma_{F^*}$, for which we will from now on further assume $\tilde\gamma_{F^*}\geq 0$. To formulate the scalar version of \cref{ass:k-nonlinear}, we also introduce the corresponding factor $\tilde \gamma_{G}\geq 0$; see \cref{ass:main-for-convergence}\,\ref{item:main-for-convergence:k-nonlinear} below.

Let us comment on these relations in turn.  The conditions \eqref{eq:scalar-step-rules0} and \eqref{eq:scalar-test-update} limit the rate of growth of the testing parameters -- and thus the convergence rate -- and set basic coupling conditions for the step length parameters. They are virtually unchanged from the standard case of linear $K$; see \cite[Example 3.2]{tuomov-proxtest}.

The conditions in \eqref{eq:scalar-step-rules-test} are essentially step length bounds: 
Substituting $\eta_i=\tauTest_i\tau_i$ and  $\eta_i^2=\tauTest_i\tau_i \sigmaTest_{i}\sigma_{i}$ in \eqref{eq:scalar-step-rules-test}, we obtain
\begin{equation}
	\label{eq:scalar-tau-sigma-estimate-rhoy}
	\tau_i\le \frac{\delta}{\lambda+(\omega_i+2)L\metricRhoY},
	\quad\text{and}\quad
	\sigma_{i}\tau_i\le\frac{1-\kappa}{R_K^2},
\end{equation}
where $R_K=\sup_{\neighx}\norm{\grad K(x)}$. In the latter bound, we also used $\sigmaTest_{i+1}\ge \sigmaTest_{i}$, which follows from \eqref{eq:scalar-test-update} and $\tilde\gamma_{F^*} \ge 0$. We point out that this  condition is simply a variant for nonlinear $K$ of the standard initialization condition $\tau\sigma \norm{K}^2 < 1$ for the PDHGM for linear $K$.
From \eqref{eq:scalar-tau-sigma-estimate-rhoy}, we see that we need to initialize and keep the dual iterates at a known finite distance $\metricRhoY$ from $\realopty$. It also individually bounds the primal step length based on further properties of the specific saddle-point problem. The nonconvexity enters here via the factor $\lambda$ from the three-point condition on $K$ (\cref{ass:k-nonlinear}).

The final condition \eqref{eq:scalar-gamma-rules} bounds the acceleration parameters $\tilde\gamma_{F^*}$ based on the actually available strong monotonicity minus any penalties we get from nonlinearity of $K$ if \cref{ass:k-nonlinear} is satisfied with $p\in(1,2]$. However, $K$ may contribute to strong monotonicity of the primal problem \emph{at} $\realoptx$, so it can in some specific problems be possible to choose $\tilde \gamma_G > \gamma_G$.

Before we will further refine these bounds in the following sections, we collect the scalar refinements of all the structural assumptions of \cref{sec:nlpdhgm-analysis}.
\begin{assumption}
	\label{ass:main-for-convergence}
	Suppose $G: X \to \extR$ and $F^* \to \extR$ are convex, proper, and lower semicontinuous, and  $K \in C^1(X; Y)$.
        Furthermore:
        \begin{enumerate}[label=(\roman*)]
            \item \label{item:main-for-convergence-lipschitz-gradk} (\emph{locally Lipschitz $\grad K$})
                There exists $L \ge 0$ with $\|\kgrad{x}-\kgrad{x'}\|\le L\|x-x'\|$ for any $x,x'\in\neighx_K$.
            \item  \label{item:main-for-convergence-bounded-gradk}
                (\emph{locally bounded $\grad K$})
                There exists $R_K>0$ with $\sup_{x\in\neighx_K}\norm{\grad K(x)}\le R_K$.
            \item \label{item:main-for-convergence:monotone-g-f} (\emph{monotone $\subdiff G$ and $\subdiff F^*$})
                The mapping $\subdiff G$ is ($\gamma_GI$-strongly) monotone at $\realoptx$ for $-\kgradconj{\realoptx}\realopty$ in $\neighx_G$ with $\gamma_G \ge 0$; and the mapping $\subdiff F^*$ is ($\gamma_{\LIN}P_\LIN+\gamma_{\NL}\Pnl$-strongly) monotone at $\realopty$ for $\realopt{\xi}+K(\realoptx)$ in $\neighy_{F^*}$ with $\gamma_{\LIN},\gamma_{\NL} \ge 0$.
            \item \label{item:main-for-convergence:k-nonlinear}
                (\emph{three-point condition on $K$}) 
                For some $p\in[1,2]$, some $\lambda,\theta \ge0$, and any $x,x'\in\neighx_K$,
                \begin{multline*}
                    \iprod{[\kgrad{x'}-\kgrad{\realoptx}]^*\realopty}{x-\realoptx}+(\gamma_G-\tilde\gamma_G)\norm{x-\realoptx}^2
                    \\
                    \ge \theta\norm{K(\realoptx)-K(x)-\kgrad{x}(\realoptx-x)}^p-\frac{\lambda}{2}\norm{x-x'}^2.
                \end{multline*}
            \item \label{item:main-for-convergence:neighbourhood}
                (\emph{neighborhood-compatible iterations})
                The iterates of \cref{alg:NL-PDHGM} satisfy 
                $\{\thisu\}_{i \in \N}\in \neighu (\metricRhoY)$ and $\{\overnextx\}_{i \in \N}\in \neighx_K$ for some $\metricRhoY \ge 0$, where $\neighu(\metricRhoY)$ is given by \eqref{eq:neighu-definition}.
        \end{enumerate}
\end{assumption}

We again close with remarks on the assumptions.
\begin{itemize}
    \item Assumptions \ref{item:main-for-convergence-lipschitz-gradk} and \ref{item:main-for-convergence-bounded-gradk} are standard assumptions in nonlinear optimization of smooth functions.
    \item Assumption \ref{item:main-for-convergence:monotone-g-f} is always satisfied due to the assumed convexity of $G$ and $F^*$; it only becomes restrictive under the additional requirement that $\gamma_G$ or $\gamma_L,\gamma_{NL}$ are positive, which will be needed to derive convergence rates in the next section. However, we stress that  we never require the functions to be strongly monotone globally; rather we only require the strong monotonicity \emph{at $\realoptx$} or \emph{at $\realopty$}. Furthermore -- and this is a crucial feature of our operator-based analysis --  we can split the strong monotonicity of $\subdiff F^*$ on the subspaces $\Ynl$ and $\Ylin$. We will more depend on the former, which is often automatic as in the example given in the introduction.
    \item We have already elaborated on \ref{item:main-for-convergence:k-nonlinear} in \cref{ex:0d-dual} and \cref{pr:strongly-convex-primal}. In particular, this condition holds if the saddle-point problem satisfies a standard second-order growth condition at $\realoptu$ with respect to the primal variable.
    \item Finally, only assumption \ref{item:main-for-convergence:neighbourhood} is specific to the actual algorithm (i.e., the choice of step sizes); it requires that the iterates of \cref{alg:NL-PDHGM} remain in the neighborhood in which the first four assumptions are valid.  We will prove in \cref{sec:neighborhood-compatible-iterations} that this can be guaranteed under additional bounds on the step lengths. Moreover, we will demonstrate in \cref{rmk:global-convergence} that \cref{ass:main-for-convergence}\,\ref{item:main-for-convergence:neighbourhood} is always satisfied for a specific class of problems, for which we therefore obtain global convergence.
\end{itemize}

\subsection{Convergence results}
\label{sec:scalar-convergence}

We now come to the core of our work, where we apply the analysis of the preceding sections to derive convergence results under explicit step lengths rules. We start with a weak convergence result that requires no (partial) strong monotonicity, which however needs to be replaced with additional assumptions on $K$. 
\begin{theorem}[weak convergence]
	\label{thm:weak-convergence-nlpdhgm}
	Suppose \cref{ass:main-for-convergence} holds for some $L\geq 0$, $R_K>0$, $\lambda\geq 0$, and $\metricRhoY \geq 0$ and choose the step lengths as $\tau_i \equiv \tau$, $\sigma_i \equiv \sigma$, and $\omega_i \equiv 1$.
        Assume that for some $\zeta>0$ and $p\in[1,2]$ that 
        \begin{equation}\label{eq:weak-convergence-p}
            \gamma_{\NL}\ge (p-1)\zeta\quad \text{and} \quad   \theta\ge\metricRhoY^{2-p}p^{-p}\zeta^{1-p},
        \end{equation}
        and for some $0<\delta<\kappa<1$ that
	\begin{equation}
	\label{eq:step-weak-convergence}
		0<\tau< \frac{\delta}{3L\metricRhoY+\lambda}
		\quad\text{and}\quad
		0<\sigma\tau<\frac{1-\kappa}{R_K^2}.
	\end{equation}
    Furthermore, suppose that
    \begin{enumerate}[label=(\roman*)]
        \item\label{item:weak-convergence-cont} 
            $\thisx \weakto \bar{x}$ implies that $\grad K(\thisx)x\rightarrow\grad K(\bar{x})x$ for all $x\in X$,
    \end{enumerate}
    and either
    \begin{enumerate}[label=(ii\alph*)]
        \item\label{item:weak-convergence-maxmon}
            $H(u)$ is maximally monotone in $\neighu (\metricRhoY)$;
        \item\label{item:weak-convergence-weakstrong}
            the mapping $(x,y)\mapsto(\kgradconj{x}y,K(x))$ is weak-to-strong continuous in $\neighu(\metricRhoY)$; or
        \item\label{item:weak-convergence-weakweak} 
            the mapping $(x,y)\mapsto(\kgradconj{x}y,K(x))$ is weak-to-weak continuous, and in addition \cref{ass:main-for-convergence}\,\ref{item:main-for-convergence:monotone-g-f} (monotone $\subdiff G$ and $\subdiff F^*$) and \cref{ass:main-for-convergence}\,\ref{item:main-for-convergence:k-nonlinear} (three-point condition on $K$) hold at any weak limit $\bar{u}=(\bar{x},\bar{y})$ of $\{\thisu\}$ in addition to $\realoptu$ with $\theta\ge(2\metricRhoY)^{2-p}p^{-p}\zeta^{1-p}$.
    \end{enumerate}
    Then the sequence $\{\thisu\}$ generated by \eqref{eq:ppext} converges weakly to some $\bar{u} \in \inv H(0)$ (possibly different from $\realoptu$).
\end{theorem}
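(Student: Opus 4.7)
The plan is to invoke the weak-convergence meta-result \cref{prop:rateless-varying} applied to the iteration \eqref{eq:ppext}. This reduces the proof to three tasks: verify the descent inequality \eqref{eq:convergence-fundamental-condition-iter-h} with a suitably negative $\Penalty_{i+1}$; verify the structural conditions \ref{item:weak-zm-opers} and \ref{item:weak-zm-a-limit}; and verify the weak-closedness property \ref{item:weak-zm-h-limit}.

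For the descent inequality, I would set $\tilde\gamma_G=\tilde\gamma_{F^*}=0$, so that \eqref{eq:scalar-step-rules0}--\eqref{eq:scalar-test-update} combined with $\omega_i\equiv 1$ force all testing and step parameters to be constant in $i$, while \eqref{eq:scalar-gamma-rules} is guaranteed by \eqref{eq:weak-convergence-p}. \cref{thm:nonneg-penalty-nlpdhgm} then applies: the \emph{strict} inequalities in \eqref{eq:step-weak-convergence} make both diagonal blocks of $S_{i+1}$ uniformly bounded below by a positive multiple of the identity, so $S_{i+1}\ge\hat\delta' I$ for some $\hat\delta'>0$. Choosing $\Penalty_{i+1}\defeq -\tfrac12\norm{\nextu-\thisu}^2_{S_{i+1}}$ and exploiting the uniform upper bound $\norm{\Test_{i+1}\Precond_{i+1}}\le C^2$ (which follows from $\norm{\grad K}\le R_K$ on $\neighx_K$), I obtain $\Penalty_{i+1}\le -\tfrac{\hat\delta}{2}\norm{\nextu-\thisu}^2_{\Test_{i+1}\Precond_{i+1}}$ with $\hat\delta=\hat\delta'/C^2$, which is exactly the form required by \cref{prop:rateless-varying}. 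Conditions \ref{item:weak-zm-opers} and \ref{item:weak-zm-a-limit} are then immediate: the uniform positivity $\varepsilon I\le\Test_{i+1}\Precond_{i+1}$ comes from \cref{lemma:zimi-estim} with $\delta\leftarrow\kappa/2$; the uniform upper bound was just noted; and \ref{item:weak-convergence-cont} together with the uniform bound on $\grad K$ yields the strong pointwise limit $A_\infty$ on any weakly convergent subsequence, with the adjoint part handled by a standard density argument for bounded operator sequences on Hilbert spaces.

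The main obstacle is verifying \ref{item:weak-zm-h-limit}, i.e.\ that any weak limit $\bar u$ of $\{\thisu\}$ satisfies $0\in \Step H(\bar u)$ for some nonsingular $\Step$ (here $\Step=\diag(\tau I,\sigma I)$). Summability from \eqref{eq:convergence-result-main-h} gives $\nextu-\thisu\to 0$ strongly, hence $\Test_{i+1}\Precond_{i+1}(\nextu-\thisu)\to 0$; moreover $\omega_i=1$ makes $\overnextx-\nextx\to 0$, so the linearization error $\tilde H_{i+1}(\nextu)-H(\nextu)\to 0$ via \eqref{eq:ass-k-lipschitz-2} and \cref{ass:k-lipschitz}. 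Passing to the limit along $u^{i_k}\weakto \bar u$ in \eqref{eq:ppext}, it remains to identify $0$ as an element of $\Step H(\bar u)$. Under \ref{item:weak-convergence-maxmon} this is immediate from the sequential weak-strong closedness of the graph of any maximally monotone operator. Under \ref{item:weak-convergence-weakstrong}, the assumed weak-to-strong continuity gives strong convergence of $\kgradconj{x^{i_k}}y^{i_k}$ and $K(x^{i_k})$, and the maximally monotone subdifferentials $\subdiff G$ and $\subdiff F^*$ have weak-strong closed graphs, completing the argument. The subtlest case is \ref{item:weak-convergence-weakweak}: only weak-to-weak continuity of the nonlinear parts is assumed, so I would test \eqref{eq:ppext} against $u-\bar u$ for arbitrary $(u,w)$ with $w\in H(u)$ in $\neighu(\metricRhoY)$, invoke the local monotonicity of \cref{ass:gf} and the three-point condition \cref{ass:k-nonlinear} \emph{at $\bar u$} (where the strengthened bound $\theta\ge(2\metricRhoY)^{2-p}p^{-p}\zeta^{1-p}$ accommodates that both $\nextu$ and $\bar u$ lie in $\neighu(\metricRhoY)$), and take the liminf. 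Combined with the weak convergence of the remaining terms, this produces a monotonicity inequality at $\bar u$ that identifies $0\in H(\bar u)$ by a standard maximal-monotonicity argument. The conclusion $\thisu\weakto\bar u\in\inv H(0)$ then follows from \cref{prop:rateless-varying}.
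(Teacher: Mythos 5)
Your overall architecture coincides with the paper's: \cref{prop:rateless-varying} as the driver, \cref{thm:nonneg-penalty-nlpdhgm} with $\tilde\gamma_G=\tilde\gamma_{F^*}=0$ and constant parameters to get $\Penalty_{i+1}\le-\frac{\hat\delta}{2}\norm{\nextu-\thisu}^2_{\Test_{i+1}\Precond_{i+1}}$ from the strict inequalities in \eqref{eq:step-weak-convergence}, \cref{lemma:zimi-estim} plus boundedness of $\grad K$ for conditions \ref{item:weak-zm-opers} and \ref{item:weak-zm-a-limit}, and the weak--strong closedness of maximally monotone graphs for cases \ref{item:weak-convergence-maxmon} and \ref{item:weak-convergence-weakstrong}. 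Up to that point the proposal is essentially the paper's proof.

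The genuine gap is in case \ref{item:weak-convergence-weakweak}. You propose to test \eqref{eq:ppext} against arbitrary graph points $(u,w)$, $w\in H(u)$, in $\neighu(\metricRhoY)$ and then ``identify $0\in H(\bar u)$ by a standard maximal-monotonicity argument.'' That Minty-type conclusion requires $H$ to be (locally) \emph{maximally monotone} -- which is precisely hypothesis \ref{item:weak-convergence-maxmon}, not available under \ref{item:weak-convergence-weakweak}; indeed under \ref{item:weak-convergence-weakweak} the operator $H$ need not even be pairwise monotone on the neighborhood, since \cref{ass:main-for-convergence}\,\ref{item:main-for-convergence:monotone-g-f} and \ref{item:main-for-convergence:k-nonlinear} give inequalities only \emph{at} $\realoptu$ and at the weak limit $\bar u$, for the specific associated elements, not between arbitrary pairs $(u,w)$ and the iterates. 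There is also the bilinear-term obstruction: after moving the nonlinear parts to the residual, both the iterates and the residuals $v_{i+1}$ (see \eqref{eq:weak-converging-subdiff}) converge only \emph{weakly}, so one cannot pass to the limit in $\iprod{v_{i+1}}{\nextu}$ directly. The paper's resolution is to isolate the maximally monotone subdifferential operator $A$ (the nonlinear terms being absorbed into $v_{i+1}$), and to apply the Brezis--Crandall--Pazy lemma, whose hypothesis $\limsup_i\iprod{u_i-\bar u}{v_i-\bar v}\le 0$ is exactly what the monotonicity and three-point conditions at $\bar u$, with the strengthened bound $\theta\ge(2\metricRhoY)^{2-p}p^{-p}\zeta^{1-p}$ (since $\norm{\nexty-\bar y}_{\Pnl}\le 2\metricRhoY$), deliver via the estimate \eqref{eq:weak-convergence-q}. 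You cite the right ingredients (conditions at $\bar u$, the $2\metricRhoY$ factor, a liminf of an inner product), but without replacing the Minty-over-$H$ step by this BCP/demiclosedness argument for the subdifferential part alone, the concluding inclusion $\bar v\in A(\bar u)$, equivalently $0\in H(\bar u)$, is not justified.
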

\begin{proof}
    We wish to apply \cref{prop:rateless-varying} and therefore need to verify its assumptions. For the basic assumptions of \eqref{eq:convergence-fundamental-condition-iter-h} and the self-adjointness of $\Test_{i+1}\Precond_{i+1}$, we will use \cref{thm:nonneg-penalty-nlpdhgm} together with \cref{lemma:zimi-estim}.
    Most of their assumptions are directly verified by \cref{ass:main-for-convergence}; it only remains to verify \eqref{eq:basic-step-rules} and \eqref{eq:zero-penalty-rules}, which reduce to \eqref{eq:scalar-rules} in the scalar case. By taking $\tilde\gamma_G=\tilde\gamma_{F^*}=0$ and any positive constants $\sigmaTest$ and $\tauTest$ such that $\sigmaTest\sigma=\tauTest\tau$, the relations \eqref{eq:scalar-step-rules0}, \eqref{eq:scalar-test-update}, and \eqref{eq:scalar-gamma-rules} hold. Furthermore, since $\omega_i \equiv 1$, \eqref{eq:step-weak-convergence} is equivalent to \eqref{eq:scalar-tau-sigma-estimate-rhoy}. This yields \eqref{eq:scalar-step-rules-test}, completing the verification of \eqref{eq:scalar-rules} and thus the conditions of \cref{thm:convergence-result-main-h,prop:rateless-varying}. 

    Since the inequalities in \eqref{eq:step-weak-convergence} are strict, we can deduce from \cref{thm:nonneg-penalty-nlpdhgm} that \eqref{eq:penalty-nlpdhgm} even holds for $\Penalty_{i+1}\le-\hat{\delta}\norm{\nextu-\thisu}^2$ for some $\hat{\delta}>0$. Combining \eqref{eq:test-precond-expansion-estimate} and \eqref{eq:step-weak-convergence}, we thus obtain that condition \ref{item:weak-zm-opers} of \cref{prop:rateless-varying} holds. Furthermore, the condition \ref{item:weak-zm-a-limit} follows from the assumed constant step lengths and the assumption \ref{item:weak-convergence-cont}.

    It remains to show the condition \ref{item:weak-zm-h-limit} of \cref{prop:rateless-varying}. First, if the assumption \ref{item:weak-convergence-maxmon} holds, the inclusion in condition \ref{item:weak-zm-h-limit} follows directly from the fact that maximally monotone operators have sequentially weakly–strongly closed graphs \cite[Proposition 20.38]{bauschke2017convex}. 

    The two other cases are more difficult to verify. First, we note that for any $\nextx\weakto \bar{x}$ and $\nexty\weakto \bar{y}$ we have $\Step_{i+1}\equiv \Step$, and \eqref{eq:ppext} implies that $v_{i+1}\in \Step A(\nextu)$ for 
    \begin{align}
        \nonumber
        A(\nextu)&\defeq  
        \begin{pmatrix}
            \subdiff G(\nextx)-\gamma_{G}(\nextx-\bar{x})\\
            \subdiff F^*(\nexty) -\gamma_{\NL}\Pnl(\nexty-\bar{y})
        \end{pmatrix}
        \\
        \intertext{and}
        \label{eq:weak-converging-subdiff}
        v_{i+1} &\defeq
        \begin{multlined}[t]
            \Step
            \begin{pmatrix}
                - \kgradconj{\nextx}\nexty-\gamma_{G}(\nextx-\bar{x})\\
                K(\nextx)-\gamma_{\NL}\Pnl(\nexty-\bar{y})
            \end{pmatrix}
            \\
            -
            \Step
            \begin{pmatrix}
                [\kgrad{\thisx}-\kgrad{\nextx}]^*{\nexty} \\
                K(\nextx)-K(\overnextx)-\kgrad{\thisx}(\nextx-\overnextx).
            \end{pmatrix}
            -\Precond_{i+1}(\nextu-\thisu).
        \end{multlined}
    \end{align}
    Therefore, we need to show that 
    \[
        v_{i+1}\weakto \bar{v}\defeq
        \Step\begin{pmatrix}
            - \kgradconj{\bar{x}}\bar{x}\\
            K(\bar{x})
        \end{pmatrix}
        \quad\text{and}\quad
        \bar{v}\in A(\bar{u}),
    \]
    which by construction is equivalent to the inclusion $\bar{u} \in \inv H(0)$. Note that due to \cref{ass:main-for-convergence}\,\ref{item:main-for-convergence:monotone-g-f}, $A$ is maximally monotone since it only involves subgradient mappings of proper, convex, and lower semicontinuous functions.
    From \cref{thm:convergence-result-main-h}, we obtain \eqref{eq:convergence-result-main-h}, which in turn implies that $\Test_{i+1}\Precond_{i+1}(\nextu-\thisu) \to 0$. The scalar case of \cref{lemma:zimi-estim} together with the condition $0<\delta<\kappa<1$ and positive $\sigmaTest$ and $\tauTest$ then results in $\norm{\nextu-\thisu} \to 0$, so the last two terms in \eqref{eq:weak-converging-subdiff} go to zero.
    We therefore only have to consider the first term, for which we make a case distinction:

    If assumption \ref{item:weak-convergence-weakstrong} holds, we obtain that $v_{i+1}\to \bar{v}$, and the required inclusion $\bar{v}\in A(\bar{u})$ follows from the fact that the graph of the maximally monotone operator $A$ is sequentially weakly–strongly closed; see \cite[Proposition 16.36]{bauschke2017convex}. 

    If assumption \ref{item:weak-convergence-weakweak} holds, then only $v_{i+1}\weakto \bar{v}$. In this case, we can apply the Brezis--Crandall--Pazy Lemma \cite[Corollary 20.59 (iii)]{bauschke2017convex} to obtain the required inclusion under the additional condition that $\limsup_{i\to \infty}~\iprod{u_i-\bar{u}}{v_i-\bar{v}}\le 0$.
    In our case,
    $\limsup_{i\rightarrow\infty}~\iprod{u_i-\bar{u}}{v_i-\bar{v}} = \limsup_{i\rightarrow\infty}~q_i$
    for
    \begin{multline*}
        q_i \defeq
        \iprod{\kgradconj{\bar{x}}\bar{y}-\kgradconj{\nextx}\nexty}{\nextx-\bar{x}}
        +\iprod{K(\nextx)-K(\bar{x})}{\nexty-\bar{y}}
        \\
        -\gamma_{\NL}\norm{\nexty-\bar{y}}_{\Pnl}^2-\gamma_{G}\norm{\nextx-\bar{x}}^2.
    \end{multline*}
    Note that $\norm{\nexty-\bar{y}}_{\Pnl} \le 2\metricRhoY$ because $\norm{\nexty-\realopty}_{\Pnl}, \norm{\realopty-\bar{y}}_{\Pnl} \le \metricRhoY$.
    With this, \eqref{eq:ass-k-lipschitz-2}, and both \cref{ass:main-for-convergence}\,\ref{item:main-for-convergence:monotone-g-f} and \ref{item:main-for-convergence:k-nonlinear} at $\bar{u}$, we similarly to \eqref{eq:dk-estimate-p} estimate
    \begin{equation}
        \label{eq:weak-convergence-q}
        \begin{aligned}[t]
            q_i
            &=\iprod{K(\nextx)-K(\bar{x})+\kgrad{\nextx}(\bar{x}-\nextx)}{\nexty-\bar{y}}
            \\
            &\quad
            -(\iprod{(\kgrad{\thisx}-\kgrad{\bar{x}})(\nextx-\bar{x})}{\bar{y}}+\gamma_{G}\norm{\nextx-\bar{x}}^2)
            \\
            &\quad
            -\gamma_{\NL}\norm{\nexty-\bar{y}}_{\Pnl}^2+\iprod{(\kgrad{\thisx}-\kgrad{\nextx})(\nextx-\bar{x})}{\bar{y}}
            \\
            &\le
            (\norm{\nexty-\bar{y}}_{\Pnl}^{2-p}p^{-p}\zeta^{1-p}-\theta) \norm{K(\bar{x})-K(\nextx)-\kgrad{\nextx}(\bar{x}-\nextx)}^p
            \\
            &\quad
            +((p-1)\zeta-\gamma_{\NL})\norm{\nexty-\bar{y}}_{\Pnl}^2+O(\norm{\nextx-\thisx}).
        \end{aligned}
    \end{equation}
    Since 
    \[
        \norm{\nexty-\bar{y}}_{\Pnl}^{2-p}p^{-p}\zeta^{1-p}-\theta\le(2\metricRhoY)^{2-p}p^{-p}\zeta^{1-p}-\theta\le0,
    \]
    $(p-1)\zeta-\gamma_{\NL}\le0$, and $\Test_{i+1}\Precond_{i+1}(\nextu-\thisu) \to 0$, we obtain that $\limsup_{i\rightarrow\infty}~q_i \le 0$. The Brezis--Crandall--Pazy Lemma thus yields the desired inclusion $\bar{v}\in A(\bar{u})$.

    Hence in all three cases, the condition \ref{item:weak-zm-h-limit} of \cref{prop:rateless-varying} holds with $\thisu \weakto \bar{u} \in \inv H(0)$, which completes the proof.
\end{proof}

\begin{remark}
    It is instructive to consider the two limiting cases $p=1$ and $p=2$ in the conditions \eqref{eq:weak-convergence-p} of \cref{thm:weak-convergence-nlpdhgm}:
    \begin{enumerate}[label=(\roman*)]
        \item
            If $p=1$, the condition on $\gamma_{\NL}$ is trivially satisfied, while the one on $\theta$ reduces to $\metricRhoY\le\theta$.
            Therefore, we only require the dual variable to be initialized close to $\realopty$ (and only when projected into the subspace $\Ynl$).

        \item
            In contrast, if $p=2$, the condition on $\theta$ does not involve $\metricRhoY$ and hence there will be no dual initialization bound; on the other hand, $\zeta \ge (4\theta)^{-1}$ will be required. Therefore, we need $F^*$ to be strongly convex with the factor $\gamma_{\NL}\ge(4\theta)^{-1}$, but only \emph{at $\realopty$} within the subspace $\Ynl$.
    \end{enumerate}

    The remaining cases $p\in (1,2)$ can be seen as an interpolation between these conditions.
	The same observations hold for the other results of this section.  
\end{remark}

We now turn to convergence rates under strong monotonicity assumptions, starting with the case that merely $G$ is strongly convex. Since we obtain   \emph{a fortiori} strong convergence from the rates, we do not require the additional assumptions on $K$ needed to apply \cref{prop:rateless-varying}; on the other hand, we only obtain convergence of the primal iterates. In the proof of the following result, observe how the step length choice follows directly from having to satisfy \eqref{eq:scalar-test-update} and the desire to keep $\sigma_i\tau_i$ constant to satisfy \eqref{eq:scalar-step-rules-test} via the bound \eqref{eq:scalar-tau-sigma-estimate-rhoy} on the initial choice.
\begin{theorem}[convergence rates under acceleration]
    \label{thm:acceleration-nlpdhgm}
    Suppose \cref{ass:main-for-convergence} holds for some $L\geq0$, $R_K>0$, $\lambda\geq 0$, and $\metricRhoY\geq0$ with $\tilde{\gamma}_G>0$, $\gamma_{\NL}\ge (p-1)\zeta$, and $\theta\ge\metricRhoY^{2-p}p^{-p}\sqrt{1+2\tau_0\tilde\gamma_G}\zeta^{1-p}$ for some $\zeta>0$ and $p\in[1,2]$.
    Choose
    \begin{equation}
        \label{eq:acceleration-updates}
        \tau_{i+1}=\tau_{i}\omega_{i},
        \quad
        \sigma_{i+1}=\sigma_{i}/\omega_{i},
        \quad
        \omega_i=1/\sqrt{1+2\tau_i\tilde{\gamma}_G}
    \end{equation}
    with 
    \begin{equation}
        \label{eq:acceleration-init}
        0<\tau_0\le \frac{\delta}{3L\metricRhoY+\lambda},
        \quad\text{and}\quad
        0<\tau_0\sigma_0\le\frac{1-\kappa}{R_K^2}.
    \end{equation} 
    for some $0<\delta\le\kappa<1$.
    Then $\norm{\thisx-\realoptx}^2$ converges to zero at the rate $O(1/N^2)$.
\end{theorem}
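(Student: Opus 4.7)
The plan is to invoke the general descent inequality of \cref{thm:convergence-result-main-h} via the scalar reduction \eqref{eq:scalar-rules} of \cref{thm:nonneg-penalty-nlpdhgm} with $\Penalty_{i+1} \le 0$, and then translate the resulting bound on $\|u^N-\realoptu\|^2_{\Test_{N+1}\Precond_{N+1}}$ into the claimed $O(1/N^2)$ rate on $\|x^N-\realoptx\|^2$ by computing the growth of the primal test parameter $\tauTest_N$. The acceleration concerns only the primal block, so I set $\tilde\gamma_{F^*} \defeq 0$, which makes $\gamma_\LIN\geq\tilde\gamma_{F^*}$ trivial and $\gamma_\NL\geq \tilde\gamma_{F^*}+(p-1)\zeta$ reduce to the standing hypothesis $\gamma_\NL\geq (p-1)\zeta$.

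Next, I would choose the testing parameters so that \eqref{eq:scalar-step-rules0}--\eqref{eq:scalar-test-update} hold automatically. Define $\tauTest_0 > 0$ arbitrary, and set $\tauTest_{i+1}\defeq \tauTest_i(1+2\tau_i\tilde\gamma_G)$, so $\omega_i=1/\sqrt{1+2\tau_i\tilde\gamma_G}=\sqrt{\tauTest_i/\tauTest_{i+1}}$. Together with $\tau_{i+1}=\tau_i\omega_i$, a direct calculation gives the key invariant $\tauTest_i\tau_i^2 = \tauTest_0\tau_0^2$, and hence $\eta_{i+1}/\eta_i = \omega_i^{-1}$. Setting $\sigmaTest_i\defeq \eta_i/\sigma_i$, the update $\sigma_{i+1}=\sigma_i/\omega_i$ yields $\sigmaTest_{i+1}=\sigmaTest_i$ (consistent with $\tilde\gamma_{F^*}=0$) and the product $\sigma_i\tau_i=\sigma_0\tau_0$ is also preserved. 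Now I verify the two step-length bounds of \eqref{eq:scalar-tau-sigma-estimate-rhoy}: since $\omega_i\le 1$ the sequence $\tau_i$ is nonincreasing, so the bound $\tau_i\leq \delta/(\lambda+3L\metricRhoY)$ follows from the hypothesis on $\tau_0$ in \eqref{eq:acceleration-init}; and $\sigma_i\tau_i=\sigma_0\tau_0\leq(1-\kappa)/R_K^2$ from the same hypothesis. Finally, $\omega_i^{-1}=\sqrt{1+2\tau_i\tilde\gamma_G}\leq \sqrt{1+2\tau_0\tilde\gamma_G}$, so the hypothesis on $\theta$ implies the condition $\theta\ge \metricRhoY^{2-p}p^{-p}\omega_i^{-1}\zeta^{1-p}$ required by \cref{thm:nonneg-penalty-nlpdhgm}. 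All scalar conditions \eqref{eq:scalar-rules} are therefore met, so \cref{thm:nonneg-penalty-nlpdhgm} yields \eqref{eq:convergence-fundamental-condition-iter-h} with $\Penalty_{i+1}\le 0$ along the iterates (using \cref{ass:main-for-convergence}\,\ref{item:main-for-convergence:neighbourhood} to guarantee $\nextu,\thisu\in\neighu(\metricRhoY)$ and $\overnextx\in\neighx_K$).

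Invoking \cref{thm:convergence-result-main-h} then gives $\tfrac12\|u^N-\realoptu\|^2_{\Test_{N+1}\Precond_{N+1}}\le\tfrac12\|u^0-\realoptu\|^2_{\Test_1\Precond_1}$, and \cref{lemma:zimi-estim} (applied with, say, $\delta$ as in the hypotheses) gives $\Test_{N+1}\Precond_{N+1}\ge \delta\tauTest_N I$ on the primal block. Hence $\|x^N-\realoptx\|^2\le C_0/\tauTest_N$ for a constant $C_0$ depending on $u^0$, $\realoptu$, and $\tauTest_0, \sigmaTest_1$. It remains to show $\tauTest_N=\Omega(N^2)$. From the invariant $\tauTest_i\tau_i^2=\tauTest_0\tau_0^2$, this is equivalent to $\tau_N^{-1}=\Omega(N)$, which follows from the recursion
\begin{equation*}
    \tau_{i+1}^{-2}-\tau_i^{-2} = 2\tilde\gamma_G\tau_i^{-1}
\end{equation*}
obtained by squaring $\tau_{i+1}=\tau_i\omega_i$. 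A standard induction shows $\tau_i^{-1}\geq \tilde\gamma_G i + \tau_0^{-1}$, giving $\tauTest_N \ge \tauTest_0\tau_0^2(\tilde\gamma_G N+\tau_0^{-1})^2$ and thus the rate $\|x^N-\realoptx\|^2=O(1/N^2)$.

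The only nontrivial step is the careful bookkeeping in the second paragraph: juggling the four sequences $\tau_i,\sigma_i,\tauTest_i,\sigmaTest_i$ to simultaneously satisfy the coupling \eqref{eq:scalar-step-rules0}, the test-update \eqref{eq:scalar-test-update}, and the upper step-size bounds \eqref{eq:scalar-step-rules-test}. The crucial insight that makes everything fit is the invariance of $\tauTest_i\tau_i^2$ and of $\sigma_i\tau_i$, which respectively produce the $N^2$ growth of the primal metric and preserve the standard PDHGM-type initialization condition.
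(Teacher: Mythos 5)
Your overall route is the same as the paper's: set $\tilde\gamma_{F^*}=0$, choose the testing parameters so that \eqref{eq:scalar-rules} holds (your invariants $\tauTest_i\tau_i^2=\tauTest_0\tau_0^2$ and $\sigma_i\tau_i=\sigma_0\tau_0$ are, up to a harmless rescaling, exactly the paper's choice $\sigmaTest_i\equiv1$, $\eta_i=\sigma_i$, $\tauTest_i=\sigma_0\tau_0\tau_i^{-2}$), verify the step bounds \eqref{eq:scalar-tau-sigma-estimate-rhoy} from \eqref{eq:acceleration-init} and $\omega_i^{-1}\le\sqrt{1+2\tau_0\tilde\gamma_G}$, invoke \cref{thm:nonneg-penalty-nlpdhgm} and \cref{thm:convergence-result-main-h} with $\Penalty_{i+1}\le0$, and read off the rate from \cref{lemma:zimi-estim} via the growth of $\tauTest_N\sim\tau_N^{-2}$. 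All of that bookkeeping checks out.

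The one step that fails is your closing induction. From $\tau_{i+1}^{-2}=\tau_i^{-2}+2\tilde\gamma_G\tau_i^{-1}$ one gets
\[
\tau_{i+1}^{-1}-\tau_i^{-1}
=\frac{2\tilde\gamma_G\tau_i^{-1}}{\tau_i^{-1}+\tau_{i+1}^{-1}}
<\tilde\gamma_G,
\]
because $\tau_{i+1}^{-1}>\tau_i^{-1}$; telescoping gives $\tau_i^{-1}<\tau_0^{-1}+\tilde\gamma_G\, i$ for every $i\ge1$, so the inequality you claim, $\tau_i^{-1}\ge\tilde\gamma_G\, i+\tau_0^{-1}$, is false already at $i=1$ (take $\tilde\gamma_G=\tau_0=1$: then $\tau_1^{-1}=\sqrt{3}<2$). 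The gap is easily repaired: since $\tau_i\le\tau_0$ we have $\tau_{i+1}^{-1}\le\tau_i^{-1}\sqrt{1+2\tilde\gamma_G\tau_0}$, and the same identity yields $\tau_{i+1}^{-1}-\tau_i^{-1}\ge 2\tilde\gamma_G/\bigl(1+\sqrt{1+2\tilde\gamma_G\tau_0}\bigr)>0$, so $\tau_N^{-1}$ still grows linearly in $N$ (with a smaller constant than $\tilde\gamma_G$) and $\tauTest_N=\Omega(N^2)$ follows. The paper sidesteps this computation by citing \cite[Corollary 1]{chambolle2010first} for $\tau_N=O(1/N)$. With that correction your argument is complete and coincides with the paper's proof.
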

\begin{proof}
    The first stage of the proof is similar to \cref{thm:weak-convergence-nlpdhgm}, where we verify \eqref{eq:scalar-rules} to use \cref{thm:nonneg-penalty-nlpdhgm} (but need not apply \cref{prop:rateless-varying}).
    Since we do not assume any (partial) strong convexity of $F^*$, we have to take $\tilde\gamma_{F^*}=0$, and thus \eqref{eq:scalar-gamma-rules} is satisfied by assumption. 
    Note that by \eqref{eq:acceleration-updates}, we have $\sigma_{i}\tau_i=\sigma_0\tau_0$ for all $i\in\N$, $\omega_i<1$, and $\tau_{i+1}<\tau_0$.
    Then \eqref{eq:acceleration-init} leads to \eqref{eq:scalar-tau-sigma-estimate-rhoy}, which is equivalent to  \eqref{eq:scalar-step-rules-test}.
    Taking now $\eta_i\defeq \sigma_i>0$, $\sigmaTest_i \equiv 1$, and $\tauTest_i \defeq \sigma_0\tau_0\tau_i^{-2}>0$, \eqref{eq:scalar-step-rules0} and \eqref{eq:scalar-test-update} follow from \eqref{eq:acceleration-updates} since
    $\eta_{i}\defeq \sigma_{i}=\sigma_{i+1}\omega_{i}=\eta_{i+1}\omega_{i}$
    and
    \[
        \tauTest_i\tau_i=\frac{\sigma_0\tau_0}{\tau_i}=\frac{\sigma_i\tau_i}{\tau_i}=\sigmaTest_i\sigma_i
        \quad\text{and}\quad
	    \tauTest_{i+1} \defeq \frac{\sigmaTest\sigma_0\tau_0}{\tau_{i+1}^2}
	    =\frac{\sigmaTest\sigma_0\tau_0}{\tau_{i}^2\omega_i^2}=\tauTest_{i}(1+2\tau_i\tilde{\gamma}_G).	    
    \]
    Furthermore, \eqref{eq:acceleration-updates} also implies that
    \[
        1/\omega_i\le1/\omega_0=\sqrt{1+2\tau_0\tilde{\gamma}_{G}},
    \]
    and together with our assumption on $\theta$ we obtain that $\theta\ge\metricRhoY^{2-p}p^{-p}\omega_i^{-1}\zeta^{1-p}$. We can thus apply \cref{thm:nonneg-penalty-nlpdhgm,thm:convergence-result-main-h} to arrive at \eqref{eq:convergence-result-main-h} with each $\Penalty_{i+1}\le0$. 

    We now estimate the convergence rate from \eqref{eq:convergence-result-main-h} by bounding $\Test_{N+1}\Precond_{N+1}$ from below. Using \cref{lemma:zimi-estim}, we obtain that
    \begin{equation}
        \label{eq:acceleration-result}
        \delta \tauTest_N \norm{x^N-\realoptx}^2 \le\norm{u^0-\realoptu}^2_{\Test_{1}\Precond_{1}}.
    \end{equation}
    But from \cite[Corollary 1]{chambolle2010first}, we know that $\tau_N =\mathcal{O}(N^{-1})$ as $N\to\infty$ and hence $\tauTest_N \sim \tau_N^{-2} = \mathcal{O}(N^{2})$ by our choice of $\tauTest_N$, which yields the desired convergence rate.
\end{proof}

If both $\subdiff G$ and $\subdiff F^*$ are strongly monotone, \cref{alg:NL-PDHGM} with constant step lengths leads to convergence of both primal and dual iterates at a linear rate.
\begin{theorem}[linear convergence]
    \label{thm:linear-convergence-nlpdhgm}
    Suppose \cref{ass:main-for-convergence} holds for some $L\geq 0$, $R_K>0$, $\lambda\geq $, and $\metricRhoY\geq 0$, $\tilde{\gamma}_G>0$, $\tilde{\gamma}_{F^*} \defeq \min\{\gamma_{\LIN},\gamma_{\NL}-(p-1)\zeta\}>0$, and $\theta\ge\metricRhoY^{2-p}p^{-p}\omega^{-1}\zeta^{1-p}$ for some $\zeta>0$ and $p\in[1,2]$. Choose 
    \begin{equation}
        \label{eq:linear-rate-rules}
        0 < \tau_i\equiv \tau\le\min\biggl\{\textstyle\frac{\delta}{3L\metricRhoY+\lambda},
        \frac{\sqrt{(1-\kappa)\tilde{\gamma}_{F^*}/\tilde{\gamma}_G}}{R_K}\biggr\},
        \quad
        \sigma_i\equiv \sigma \defeq \frac{\tilde{\gamma}_{G}}{\tilde{\gamma}_{F^*}}\tau,
        \quad
        \omega_i\equiv \omega \defeq \frac{1}{1+2\tilde{\gamma}_{G}\tau}
    \end{equation}
    for some $0\le\delta\le\kappa<1$. 
    Then $\norm{\thisu-\realoptu}^2$ converges to zero at the rate $O(\omega^N)$.
\end{theorem}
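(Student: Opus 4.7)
The plan is to mirror the proof strategy of \cref{thm:acceleration-nlpdhgm}: verify that the scalar relations \eqref{eq:scalar-rules} hold for an appropriate choice of the testing parameters $\{\tauTest_i\}, \{\sigmaTest_i\}, \{\eta_i\}$ and the auxiliary constants $\tilde\gamma_G, \tilde\gamma_{F^*}$, apply \cref{thm:nonneg-penalty-nlpdhgm} to conclude that $\Penalty_{i+1}\le 0$ in \eqref{eq:convergence-fundamental-condition-iter-h}, invoke the meta-theorem \cref{thm:convergence-result-main-h}, and finally read off the rate from the growth of $\Test_{N+1}\Precond_{N+1}$ via \cref{lemma:zimi-estim}.

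Specifically, I would set $\eta_i\defeq \omega^{-i}$, $\tauTest_i\defeq \eta_i/\tau$, and $\sigmaTest_i\defeq \eta_i/\sigma$. Then $\omega_i\equiv \omega=\eta_i/\eta_{i+1}$ and $\tauTest_i\tau=\sigmaTest_i\sigma=\eta_i$, so \eqref{eq:scalar-step-rules0} holds. For \eqref{eq:scalar-test-update}, observe that $\tauTest_{i+1}/\tauTest_i=\omega^{-1}=1+2\tilde\gamma_G\tau$ by the definition of $\omega$ in \eqref{eq:linear-rate-rules}, and likewise $\sigmaTest_{i+2}/\sigmaTest_{i+1}=\omega^{-1}=1+2\tilde\gamma_{F^*}\sigma$, where the last equality uses exactly the coupling $\sigma\tilde\gamma_{F^*}=\tau\tilde\gamma_G$ built into the choice $\sigma=(\tilde\gamma_G/\tilde\gamma_{F^*})\tau$. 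The assumption $\tilde\gamma_{F^*}=\min\{\gamma_{\LIN},\gamma_{\NL}-(p-1)\zeta\}$ together with $\gamma_{\NL}\ge(p-1)\zeta$ is precisely what guarantees \eqref{eq:scalar-gamma-rules}. Finally, the upper bound $\tau\le \delta/(3L\metricRhoY+\lambda)$ (with $\omega\le 1$) gives the first inequality of \eqref{eq:scalar-step-rules-test}, while the second, $\sigmaTest_{i+1}\ge\frac{\eta_i^2\inv\tauTest_i}{1-\kappa}\|\grad K(\thisx)\|^2$, reduces to $\omega\sigma\tau R_K^2\le 1-\kappa$; substituting $\sigma=(\tilde\gamma_G/\tilde\gamma_{F^*})\tau$ and $\omega<1$, the bound $\tau\le R_K^{-1}\sqrt{(1-\kappa)\tilde\gamma_{F^*}/\tilde\gamma_G}$ of \eqref{eq:linear-rate-rules} clearly suffices. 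The remaining requirement $\theta\ge\metricRhoY^{2-p}p^{-p}\omega^{-1}\zeta^{1-p}$ of \cref{thm:nonneg-penalty-nlpdhgm} is exactly the corresponding standing hypothesis of the theorem.

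With these verifications, \cref{thm:nonneg-penalty-nlpdhgm} yields $\Penalty_{i+1}\le 0$ for every $i$, and \cref{thm:convergence-result-main-h} gives
\[
    \tfrac{1}{2}\|u^N-\realoptu\|^2_{\Test_{N+1}\Precond_{N+1}}\le \tfrac{1}{2}\|u^0-\realoptu\|^2_{\Test_{1}\Precond_{1}}.
\]
Since $\kappa<1$, I pick any $\delta'\in(0,\kappa)$ and apply \cref{lemma:zimi-estim}, which (with our choices of $\tauTest_N$, $\sigmaTest_{N+1}$) gives $\Test_{N+1}\Precond_{N+1}\ge c\,\omega^{-N}I$ for a positive constant $c=c(\delta',\kappa,\tau,\sigma)$. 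Combined with the previous inequality this produces $\|u^N-\realoptu\|^2=O(\omega^N)$, which is the claimed linear rate.

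The only delicate point is the internal consistency of the step length choice: the acceleration rule $\omega=1/(1+2\tilde\gamma_G\tau)$ must simultaneously realize the testing update on both the primal and dual side, which is why the ratio $\sigma/\tau$ is forced to equal $\tilde\gamma_G/\tilde\gamma_{F^*}$ rather than being free (as in the purely primal-accelerated \cref{thm:acceleration-nlpdhgm}). Everything else is essentially bookkeeping, noting that the case $\delta=0$ is permitted only when the first upper bound in \eqref{eq:linear-rate-rules} is vacuous (i.e.\ $\lambda=\metricRhoY=0$, such as for linear $K$), in which case the first block of \cref{lemma:zimi-estim} is not needed for the primal rate because the primal--dual operator inequality $\Test_{N+1}\Precond_{N+1}\ge c\omega^{-N}I$ still holds via the dual block alone after a Schur-complement rearrangement.
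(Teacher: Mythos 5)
Your proposal is correct and follows essentially the same route as the paper's proof: the testing choice $\tauTest_i=\omega^{-i}/\tau$, $\sigmaTest_i=\omega^{-i}/\sigma$ is exactly the paper's $\tauTest_i\tau=\sigmaTest_i\sigma=(1+2\tilde\gamma_G\tau)^i$, the verification of \eqref{eq:scalar-rules} and the appeal to \cref{thm:nonneg-penalty-nlpdhgm}, \cref{thm:convergence-result-main-h}, and \cref{lemma:zimi-estim} match the paper step for step. Your use of a separate $\delta'\in(0,\kappa)$ in the final application of \cref{lemma:zimi-estim} is a harmless (slightly more careful) variant of the paper's reuse of $\delta$ there.
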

\begin{proof}
    The proof follows that of \cref{thm:acceleration-nlpdhgm}. 
    To verify \eqref{eq:scalar-rules}, we take $\sigmaTest_{0}\defeq 1/\sigma$, and $\tauTest_0\defeq 1/\tau$, for which \eqref{eq:scalar-test-update} is satisfied due to the second relation  of \eqref{eq:linear-rate-rules}. By induction, we further obtain from this
    \begin{equation}
        \label{eq:scalar-linear-test-update}
        \tauTest_i\tau=\sigmaTest_{i}\sigma=(1+2\tilde{\gamma}_{G}\tau)^i
    \end{equation}
    for all $i\in\N$, verifying \eqref{eq:scalar-step-rules0}.
    Inequality \eqref{eq:scalar-gamma-rules} holds due to $\tilde{\gamma}_{F^*}=\min\{\gamma_{\LIN},\gamma_{\NL}-(p-1)\zeta\}>0$.
    It remains to prove \eqref{eq:scalar-step-rules-test}, which follow via \eqref{eq:scalar-tau-sigma-estimate-rhoy} from the bound on $\tau$ in \eqref{eq:linear-rate-rules}.
    Finally, we apply \cref{lemma:zimi-estim}, \eqref{eq:scalar-linear-test-update} for $i=N$, and \cref{thm:convergence-result-main-h} to conclude that
    \[
        (1+2\tilde{\gamma}_{G}\tau)^N\left(\frac{\delta}{2\tau}\norm{x^N-\realoptx}^2+\frac{\kappa-\delta}{2\sigma(1-\delta)}\norm{y^N-\realopty}^2\right)
        \le
        \frac{1}{2}\norm{u^0-\realoptu}^2_{\Test_{1}\Precond_{1}},
    \]
    which yields the desired convergence rate.
\end{proof}

\begin{remark}[global convergence]
    \label{rmk:global-convergence}
    Following \cref{rmk:no-locality}, suppose that \cref{ass:main-for-convergence}\,\ref{item:main-for-convergence-lipschitz-gradk}--\ref{item:main-for-convergence:k-nonlinear} hold for $\neighx_K=\neighx_G=X$ and that $\Dom F^*$ is bounded. If we then take $\metricRhoY$ large enough that $\Dom F^*\subseteq  \B_\NL(\realopty, \metricRhoY)$, \cref{ass:main-for-convergence}\,\ref{item:main-for-convergence:neighbourhood} will be satisfied for \emph{any} choice of starting point $u^0=(x^0,y^0)\in X\times Y$, i.e., we have global convergence.
    Note, however, that in this case we need $\grad K$ to be bounded on the whole space, i.e., $R_K=\sup_{x\in X}\norm{\grad K(x)}<\infty$ has to hold.
\end{remark}

\subsection{Neighborhood-compatible iterations}
\label{sec:neighborhood-compatible-iterations}

To conclude our analysis, we provide explicit conditions on the initialization of \cref{alg:NL-PDHGM} to ensure that \cref{ass:main-for-convergence}\,\ref{item:main-for-convergence:neighbourhood} (neighborhood-compatible iterations) holds in cases where the global convergence of \cref{rmk:global-convergence} cannot be guaranteed.

To begin, the following result shows that the rules \eqref{eq:scalar-rules} are consistent with the sequence $\{\thisu\}_{i \in \N}$ generated by \eqref{eq:ppext} remaining in $\neighu(\metricRhoY)$, provided that $\tau_0$ is sufficiently small and that the starting point $u^0=(x^0, y^0)$ is sufficiently far inside the interior of $\neighu(\metricRhoY)$.
\begin{lemma}
	\label{lemma:neighborhood-compatible-iterations}
	Let $0 \le \delta \le \kappa < 1$ and $\metricRhoY>0$ be given, and 
	assume \eqref{eq:scalar-rules} holds with
        \begin{equation}
            1/\sqrt{1+2\tau_i\tilde{\gamma}_{G}}\le\omega_i\le\omega_{i+1}\le1\quad (i\in\N).
        \end{equation}
        Assume further that $\sup_{x\in\neighx_K}\norm{\grad K(x)}\le R_K$.
	Define
        \begin{equation}\label{eq:scalar-step-length-rmax}
            r_{\max}\defeq \sqrt{2\inv\delta(\norm{x^0-\realoptx}^2+\inv\mu\norm{y^0-\realopty}^2)}
            \quad\text{with}\quad
            \mu\defeq\sigma_1\omega_0/\tau_0.
        \end{equation}
	Assume also that $\B(\realoptx,r_{\max}+\delta_x)\times\B(\realopty,\localRhoY+\delta_y)\subseteq \neighu(\metricRhoY)$ for some $\delta_x, \delta_y>0$ as well as $\localRhoY\ge r_{\max}\sqrt{\mu(1-\delta)\delta/(\kappa-\delta)}$. If the initial primal step length $\tau_0$ satisfies
	\begin{equation}
		\label{eq:scalar-step-length-bounds-r}
		\tau_0\le\min\left\{
		\frac{\delta_x}{2R_K\localRhoY +2L\norm{\Pnl \realopty}r_{\max}},
		\frac{2\delta_y\omega_0(r_{\max}+\delta_x)^{-1}}{(L(r_{\max}+\delta_x)+2R_K)\mu}\right\},
	\end{equation}
	then \cref{ass:main-for-convergence}\,\ref{item:main-for-convergence:neighbourhood} (neighborhood-compatible iterations) holds.
\end{lemma}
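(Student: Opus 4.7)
The strategy is to prove by strong induction on $i$ that $\thisu \in \B(\realoptx, r_{\max}) \times \B(\realopty, \localRhoY)$ and that \cref{lemma:step-length-bounds-r} applies at step $i$, yielding $\nextx, \overnextx \in \B(\realoptx, r_{\max}+\delta_x) \subseteq \neighx_K$ and $\nexty \in \B(\realopty, \localRhoY+\delta_y)$. Combined with the assumed inclusion $\B(\realoptx, r_{\max}+\delta_x) \times \B(\realopty, \localRhoY+\delta_y) \subseteq \neighu(\metricRhoY)$, this delivers \cref{ass:main-for-convergence}\,\ref{item:main-for-convergence:neighbourhood}. The two technical building blocks are \cref{lemma:step-length-bounds-r} (a single-step enlargement from step length bounds) and the descent inequality \eqref{eq:convergence-result-main-h} from \cref{thm:convergence-result-main-h} used together with \cref{thm:nonneg-penalty-nlpdhgm} to keep the iterates from drifting away over many steps.

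For the base case $i=0$, the definition of $r_{\max}$ gives $\norm{x^0-\realoptx}^2 \le \delta r_{\max}^2/2 < r_{\max}^2$ (since $\delta<1$) and $\norm{y^0-\realopty}^2 \le \mu\delta r_{\max}^2/2$. The inequality $\kappa<1\le 2-\delta$ is equivalent to $(1-\delta)/(\kappa-\delta)\ge 1/2$, which upgrades the dual bound to $\norm{y^0-\realopty}^2 \le \mu\delta(1-\delta)r_{\max}^2/(\kappa-\delta) \le \localRhoY^2$ by the assumption on $\localRhoY$. With $u^0 \in \B(\realoptx,r_{\max}) \times \B(\realopty,\localRhoY)$, \cref{lemma:step-length-bounds-r} applies with $\localRhoX[0]=r_{\max}$, $\delta_{x,0}=\delta_x$: its primal hypothesis is exactly the first inequality of \eqref{eq:scalar-step-length-bounds-r}, and its dual hypothesis reduces via $\sigma_1=\mu\tau_0/\omega_0$ (from \eqref{eq:scalar-step-rules0} and the definition of $\mu$) to the second inequality of \eqref{eq:scalar-step-length-bounds-r}.

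For the inductive step, assume the claim holds through index $i$. All iterates then lie in $\neighu(\metricRhoY)$ with the corresponding midpoints in $\neighx_K$, so the assumptions of \cref{thm:nonneg-penalty-nlpdhgm} are satisfied at every step $j\le i$ (the scalar rules \eqref{eq:scalar-rules} are hypotheses of the present lemma). Hence $\Penalty_{j+1}\le 0$ for all $j\le i$, and \cref{thm:convergence-result-main-h} telescopes to $\norm{u^{i+1}-\realoptu}^2_{\Test_{i+2}\Precond_{i+2}} \le \norm{u^0-\realoptu}^2_{\Test_1\Precond_1}$. Expanding the right-hand side via \eqref{eq:test-precond-expansion}, applying Young's inequality $2ab\le a^2/\sigmaTest_1+\sigmaTest_1 b^2$ to the off-diagonal cross term, and using $\eta_0^2 R_K^2\le(1-\kappa)\tauTest_0\sigmaTest_1$ from \eqref{eq:scalar-step-rules-test}, one obtains
\begin{equation*}
    \norm{u^0-\realoptu}^2_{\Test_1\Precond_1} \le (2-\kappa)\tauTest_0\norm{x^0-\realoptx}^2 + 2\sigmaTest_1\norm{y^0-\realopty}^2 \le \tauTest_0\delta r_{\max}^2,
\end{equation*}
where the last step uses $\sigmaTest_1 = \tauTest_0/\mu$ and the definition of $r_{\max}$. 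The lower bound from \cref{lemma:zimi-estim} together with $\tauTest_{i+1}\ge\tauTest_0$ and $\sigmaTest_{i+2}\ge\sigmaTest_1$ (both from \eqref{eq:scalar-test-update} under $\tilde\gamma_G,\tilde\gamma_{F^*}\ge 0$) then yields $u^{i+1}\in \B(\realoptx,r_{\max})\times\B(\realopty,\localRhoY)$. A final application of \cref{lemma:step-length-bounds-r} to $u^{i+1}$ with $\localRhoX[i+1]=r_{\max}$ completes the induction; the primal step length hypothesis follows from $\tau_{i+1}\le\tau_0$, which is obtained by iterating $\tau_{j+1}=\tau_j/[\omega_j(1+2\tau_j\tilde\gamma_G)]\le\tau_j$ under $\omega_j\ge(1+2\tau_j\tilde\gamma_G)^{-1/2}$.

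The hardest part is to verify the uniform dual step length bound $\sigma_{i+2}\le\sigma_1/\omega_0$ needed at this last step. Writing $\sigma_{i+2}/\sigma_1 = [\prod_{j=1}^{i+1}\omega_j(1+2\sigma_j\tilde\gamma_{F^*})]^{-1}$ via \eqref{eq:scalar-step-rules0} and \eqref{eq:scalar-test-update}, the monotonicity hypothesis $\omega_i\le\omega_{i+1}$ combined with $\omega_j\ge(1+2\tau_j\tilde\gamma_G)^{-1/2}$ and the compatibility between primal and dual acceleration factors must be leveraged to ensure this product is at least $\omega_0$, giving $\sigma_{i+2}\le\sigma_1/\omega_0$ and thereby closing the induction via the dual part of \eqref{eq:scalar-step-length-bounds-r}.
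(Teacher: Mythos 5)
Your overall architecture (induct on $i$, use the descent inequality \eqref{eq:convergence-result-main-h} with $\Penalty_{j+1}\le 0$ from \cref{thm:nonneg-penalty-nlpdhgm} to keep iterates near $\realoptu$, and invoke \cref{lemma:step-length-bounds-r} to control the single-step escape) is the same as the paper's, and your base case and the containment $u^{i+1}\in\B(\realoptx,r_{\max})\times\B(\realopty,\localRhoY)$ via \cref{lemma:zimi-estim}, $\tauTest_{i+1}\ge\tauTest_0$, $\sigmaTest_{i+2}\ge\sigmaTest_1=\tauTest_0/\mu$ are sound. The gap is exactly the step you defer to the end: with \emph{constant} radii $\localRhoX[i]\equiv r_{\max}$, $\delta_{x,i}\equiv\delta_x$, closing the induction requires the dual hypothesis of \cref{lemma:step-length-bounds-r} to hold uniformly in $i$, i.e.\ $\sigma_{i+2}\le 2\delta_y(r_{\max}+\delta_x)^{-1}/\bigl(L(r_{\max}+\delta_x)+2R_K\bigr)$ for all $i$, which you try to obtain from a claimed uniform bound $\sigma_{i+2}\le\sigma_1/\omega_0$. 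That bound is false under the hypotheses of the lemma: they explicitly allow the accelerated rule of \cref{thm:acceleration-nlpdhgm}, where $\tilde\gamma_{F^*}=0$, $\omega_i=1/\sqrt{1+2\tau_i\tilde\gamma_G}<1$, $\sigma_i\tau_i$ is constant and $\tau_i=O(1/i)$, so $\sigma_{i}=\sigma_1\prod_{j}\omega_j^{-1}\to\infty$. No manipulation of the product $\prod_j\omega_j(1+2\sigma_j\tilde\gamma_{F^*})$ can rescue this, so the induction cannot be closed with fixed radii.

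The paper's proof resolves precisely this by making the primal radius and increment shrink with the testing parameter: it sets $\localRhoX[i]=\norm{u^0-\realoptu}_{\Test_1\Precond_1}/\sqrt{\delta\tauTest_i}$ and $\delta_{x,i}=\sqrt{\tauTest_0/\tauTest_i}\,\delta_x$, which the descent inequality exactly supports (it gives $\norm{\thisx-\realoptx}\le\localRhoX[i]$, a \emph{strengthening} of membership in the fixed ball that your argument does not exploit). With these choices both step length bounds propagate: $\tau_{i+1}=\tau_i/[\omega_i(1+2\tau_i\tilde\gamma_G)]$ shrinks at the same rate as $\delta_{x,i}$, and the product $\sigma_{i+2}(\localRhoX[i+1]+\delta_{x,i+1})\le\sigma_{i+1}(\localRhoX[i]+\delta_{x,i})/\bigl(\omega_i\sqrt{1+2\tau_i\tilde\gamma_G}\bigr)$ is nonincreasing precisely because of the hypotheses $\omega_i\ge 1/\sqrt{1+2\tau_i\tilde\gamma_G}$ and $\omega_{i+1}\ge\omega_i$ — this is where those assumptions enter, and it is the compensation between growing $\sigma_i$ and shrinking primal radius that your constant-radius scheme is missing. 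To repair your proof you would have to import this iteration-dependent scaling (or an equivalent device); as written, the final paragraph is an unproved and, in the accelerated regime, unprovable claim.
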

\begin{proof}
    We first set up some basic set inclusions. Without loss of generality, we can assume that $\sigmaTest_1 = 1$, as we can always rescale the testing variables $\tauTest_i$ and $\sigmaTest_i$ by the same constant without violating \eqref{eq:scalar-rules}. 
    We then define  $\localRhoX[i] \defeq\norm{u^0-\realoptu}_{\Test_{1}\Precond_{1}}/\sqrt{\delta\tauTest_i}$, $\delta_{x,i}\defeq\sqrt{\tauTest_0/\tauTest_i}\delta_x$, and
    \[
        \neighu_i \defeq\bigl\{(x,y) \in X \times Y \,\bigm|\, \norm{x-\realoptx}^2+{\textstyle \frac{\sigmaTest_{i+1}}{\tauTest_{i}}		\frac{\kappa-\delta}{(1-\delta)\delta}}\norm{y-\realopty}^2\le\localRhoX[i]^2 \bigr\}.
    \]
    We then observe from \cref{lemma:zimi-estim} that
    \begin{equation}
        \label{eq:neighui-zimi-rxi-inclusion}
        \{u \in X \times Y \mid \norm{u-\realoptu}_{\Test_{i+1}\Precond_{i+1}} \le \norm{u^0-\realoptu}_{\Test_{1}\Precond_{1}} \} \subset \neighu_i.
    \end{equation}
    From \eqref{eq:scalar-rules}, we also deduce that $\tauTest_{i+1}\ge\tauTest_{i}$ and hence that $\localRhoX[i+1]\le\localRhoX[i]$ as well as $\delta_{x,i}\le\delta_x$.
    Consequently, if $\localRhoX[0] \le r_{\max}$, then
    \begin{equation}
        \label{eq:ball-neigh-neghui-inclusion}
        \B(\realoptx,\localRhoX[i]+\delta_{x,i})\times \B(\realopty, \localRhoY+\delta_y)\subseteq \B(\realoptx, r_{\max}+\delta_x) \times \B(\realopty, \localRhoY+\delta_y)\subseteq\neighu(\metricRhoY),
    \end{equation}
    so it will suffice to show that $u^i \in \B(\realoptx,\localRhoX[i]+\delta_{x,i})\times \B(\realopty, \localRhoY+\delta_y)$ for each $i \in \N$ to prove the claim.
    We do this in two steps. The first step shows that $\localRhoX[i] \le r_{\max}$ and
    \begin{equation}
        \label{eq:neighui-product-inclusion}
        \neighu_i\subseteq\B(\realoptx,\localRhoX[i])\times \B(\realopty, \localRhoY)
        \quad (i \in \N).
    \end{equation}
    In the second step, we then show that $u^i \in \neighu_i$ as well as $\overnextx\in \neighx_K$ for $i \in \N$.
    The two inclusion  \eqref{eq:ball-neigh-neghui-inclusion} and \eqref{eq:neighui-product-inclusion} then imply that \cref{ass:main-for-convergence}\,\ref{item:main-for-convergence:neighbourhood} holds.

    \paragraph{Step 1}
    We first prove \eqref{eq:neighui-product-inclusion}.
    Since $\neighu_i\subseteq\B(\realoptx,\localRhoX[i])\times Y$, we only have to show that $\neighu_i\subseteq X\times \B(\realopty, \localRhoY)$. First, note that \eqref{eq:scalar-rules} implies that $\tilde\gamma_{F^*}\ge0$ and therefore $\sigmaTest_{i+1}\ge \sigmaTest_{i}\ge \sigmaTest_{1}=1$ as well as $\tauTest_{i+1}\geq\tauTest_i \geq \tauTest_0=\eta_1\omega_0/\tau_0=\mu$. We then obtain from the definition of $\localRhoX[i]$ that
    \[
        \localRhoX[i]^2\delta\tauTest_i
        =\norm{u^0-\realoptu}^2_{\Test_{1}\Precond_{1}}
        =\mu\norm{x^0-\realoptx}^2-2\eta_0\iprod{x^0-\realoptx}{\kgradconj{x^0}(y^0-\realopty)}+\norm{y^0-\realopty}^2.
    \]
    Using Cauchy's inequality, the fact that $\tauTest_i\geq \mu$, and the assumption $\norm{\grad K(x^0)}\le R_K$, we arrive at
    \[
        \localRhoX[i]^2
        \le(2\mu\norm{x^0-\realoptx}^2+(1+\eta_0^2\tauTest_0^{-1}R_K^2)\norm{y^0-\realopty}^2)/(\delta\mu).
    \]
    We obtain from \eqref{eq:scalar-step-rules-test} that $\eta_0^2\tauTest_0^{-1}R_K^2\le1-\kappa\le1$ and hence that $\localRhoX[i]^2\le r_{\max}^2$. The assumption on $\localRhoY$ then yields that
    \begin{equation}
        \label{eq:scalar-step-length-bounds-u0-dual}
        \localRhoY^2 
        \ge r_{\max}^2\tauTest_{0}\frac{(1-\delta)\delta}{\kappa-\delta}
        \ge\frac{\localRhoX[0]^2\tauTest_{0}}{\sigmaTest_{i+1}}\frac{(1-\delta)\delta}{\kappa-\delta}
        =\frac{\localRhoX[i]^2\tauTest_{i}}{\sigmaTest_{i+1}}\frac{(1-\delta)\delta}{\kappa-\delta}
    \end{equation}
    for all $i\in \N$, and \eqref{eq:neighui-product-inclusion} follows from the definition of $\neighu_i$.

    \paragraph{Step 2}
    We next show by induction that $\thisu \in \neighu_i$, $\overnextx\in \neighx_K$, and
    \begin{equation}
        \label{eq:scalar-step-length-bounds-ri}
        \tau_i\le\frac{\delta_{x,i}}{2R_K\localRhoY +2L\norm{\Pnl \realopty}\localRhoX[i]},
        \quad\text{and}\quad
        \sigma_{i+1}(\localRhoX[i]+\delta_{x,i})\le\frac{2\delta_y}{L(\localRhoX[i]+\delta_{x,i})+2R_K}
    \end{equation}	
    hold for all $i \in \N$.

    Since \eqref{eq:scalar-rules} holds, we can apply \cref{lemma:zimi-estim} to $\norm{u^0-\realoptu}_{\Test_{1}\Precond_{1}}$ to verify that $u^0\in\neighu_0$. 
    Moreover, since $\sigma_1=\mu\tau_0/\omega_0$, the bound \eqref{eq:scalar-step-length-bounds-ri} for $i=0$ follows from \eqref{eq:scalar-step-length-bounds-r} and the bound $\localRhoX[0]\le r_{\max}$ from Step 1. This gives the induction basis.

    Suppose now that $u^N \in \neighu_N$ and that \eqref{eq:scalar-step-length-bounds-ri} holds for $i=N$. By \eqref{eq:neighui-product-inclusion}, we have that $u^N\in \B(\realoptx,\localRhoX[N])\times \B(\realopty, \localRhoY)$.
    Since \eqref{eq:scalar-step-length-bounds-ri} guarantees \eqref{eq:step-length-bounds-r}, we can apply \cref{lemma:step-length-bounds-r} to obtain 
    \[
        u^{N+1}\in \B(\realoptx,\localRhoX[N]+\delta_{x,N})\times \B(\realopty, \localRhoY+\delta_y)\quad\text{and}\quad \bar{x}^{N+1}\in \B(\realoptx,\localRhoX[N]+\delta_{x,N}). 
    \]
    Together with \eqref{eq:ball-neigh-neghui-inclusion}, we obtain that $u^{N+1}\in \neighu(\metricRhoY)$ and $\bar{x}^{N+1}\in \neighx_K$.
    \Cref{thm:nonneg-penalty-nlpdhgm,thm:convergence-result-main-h} now imply that \eqref{eq:convergence-result-main-h} is satisfied for $i\le N$ with $\Penalty_{N+1}\le0$, which together with \eqref{eq:convergence-result-main-h} and \eqref{eq:neighui-zimi-rxi-inclusion} yields that $u^{N+1}\in\neighu_{N+1}$. This is the first part of the claim.
    To show \eqref{eq:scalar-step-length-bounds-ri}, we deduce from \eqref{eq:scalar-rules} that
    \begin{align}
        \tau_{N+1}&=\frac{\tau_{N}\tauTest_{N}}{\tauTest_{N+1}\omega_N}
        =\frac{\tau_{N}}{\omega_{N}(1+2\tau_N\tilde{\gamma}_{G})},
        &
        \localRhoX[N+1]&=\frac{\localRhoX[N]}{\sqrt{1+2\tau_N\tilde{\gamma}_{G}}},
        \\
        \sigma_{N+2}&=\frac{\sigma_{N+1}\sigmaTest_{N+1}}{\sigmaTest_{N+2}\omega_{N+1}}
        =\frac{\sigma_{N+1}}{\omega_{N+1}(1+2\sigma_{N+1}\tilde{\gamma}_{F^*})},
        &
        \delta_{x,N+1}&=\frac{\delta_{x,N}}{\sqrt{1+2\tau_N\tilde{\gamma}_{G}}}.
    \end{align}
    Hence, using $\omega_{N+1}\ge\omega_N$, $\omega_N\sqrt{1+2\tau_N\tilde{\gamma}_{G}}\ge1$, and $\localRhoX[N+1]\le\localRhoX[N]$, as well as the inductive assumption \eqref{eq:scalar-step-length-bounds-ri} shows that
    \begin{align*}
        &\tau_{N+1}
        =\frac{\delta_{x,N+1}}{\delta_{x,N}}\frac{\tau_N}{\omega_N\sqrt{1+2\tau_N\tilde{\gamma}_{G}}}
        \le
        \frac{\delta_{x,N+1}}{2R_K\localRhoY +2L\norm{\Pnl \realopty}\localRhoX[N+1]},
        \quad\text{and}
        \\
        &\sigma_{N+2}(\localRhoX[N+1]+\delta_{x,N+1})
        \le\frac{\sigma_{N+1}(\localRhoX[N]+\delta_{x,N})}{\omega_{N}{\sqrt{1+2\tau_N\tilde{\gamma}_{G}}}}
        \le\frac{2\delta_y}{L(\localRhoX[N+2]+\delta_{x,N+2})+2R_K}.
    \end{align*}
    This completes the induction step and hence the proof.
\end{proof}

If the step lengths and the over-relaxation parameter $\omega_i$ are constant, we can remove the lower bound on $\omega_i$ in \cref{lemma:neighborhood-compatible-iterations}.
\begin{lemma}
    The claims of \cref{lemma:neighborhood-compatible-iterations} also hold for $\tau_i\equiv\tau_0$, $\sigma_i\equiv\sigma_1$, and any choice of $\omega_i\equiv\omega\le1$. In particular, $\omega$ can be chosen according to \eqref{eq:linear-rate-rules}.
\end{lemma}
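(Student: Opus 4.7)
The plan is to identify precisely where the lower bound $\omega_i \ge 1/\sqrt{1+2\tau_i\tilde\gamma_G}$ enters the proof of \cref{lemma:neighborhood-compatible-iterations} and show that it becomes vacuous in the constant step length setting. Step~1 of that proof (establishing $\localRhoX[i] \le r_{\max}$ and \eqref{eq:neighui-product-inclusion}) relies only on the nondecrease of $\tauTest_i, \sigmaTest_i$ guaranteed by $\tilde\gamma_G, \tilde\gamma_{F^*} \ge 0$, and hence carries over verbatim. The lower bound on $\omega_i$ enters exclusively in Step~2, where it is needed to propagate the inductive inequalities \eqref{eq:scalar-step-length-bounds-ri} while the tolerance radius $\delta_{x,i} = \sqrt{\tauTest_0/\tauTest_i}\,\delta_x$ shrinks faster than the non-accelerated $\tau_i$ could afford.

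For $\tau_i \equiv \tau_0$ and $\sigma_{i+1} \equiv \sigma_1$, I would simply replace the shrinking tolerance radius by the constant $\delta_{x,i} \equiv \delta_x$, so that the ambient ball $\B(\realoptx, \localRhoX[i] + \delta_x)$ contracts only because $\localRhoX[i]$ does. Since $\localRhoX[i+1] \le \localRhoX[i] \le r_{\max}$ from Step~1, and since each of the maps
\[
    r \mapsto \frac{\delta_x}{2R_K\localRhoY + 2L\norm{\Pnl\realopty}r}
    \quad\text{and}\quad
    r \mapsto \frac{2\delta_y}{(L(r+\delta_x)+2R_K)(r+\delta_x)}
\]
is monotone nondecreasing as $r$ decreases, the two inequalities in \eqref{eq:scalar-step-length-bounds-ri} become self-perpetuating: if they hold at step $i$, they continue to hold at step $i+1$ because $\tau$ and $\sigma$ are unchanged while $\localRhoX[i+1]$ only decreases. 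The induction therefore reduces to verifying the $i=0$ case, which follows directly from \eqref{eq:scalar-step-length-bounds-r} after substituting $\mu = \sigma_1 \omega/\tau_0$ and using $\localRhoX[0] \le r_{\max}$. The remainder of the original Step~2---the application of \cref{lemma:step-length-bounds-r} followed by \cref{thm:nonneg-penalty-nlpdhgm,thm:convergence-result-main-h} to conclude $u^{N+1} \in \neighu_{N+1}$ and $\overnextx \in \neighx_K$---goes through unchanged. The ``in particular'' claim for \eqref{eq:linear-rate-rules} is then immediate, as $\omega = 1/(1+2\tilde\gamma_G\tau) \le 1$, and this $\omega$ is the unique choice for which \eqref{eq:scalar-test-update} is consistent with constant $\tau$.

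The only subtle point I expect is the bookkeeping around the replacement $\delta_{x,i} \equiv \delta_x$: one must check that the set inclusion $\B(\realoptx, r_{\max}+\delta_x) \times \B(\realopty, \localRhoY + \delta_y) \subseteq \neighu(\metricRhoY)$ from the hypotheses still suffices to apply \cref{lemma:step-length-bounds-r} at every step (which it does, since $\localRhoX[i] + \delta_x \le r_{\max} + \delta_x$). Beyond this, there is no real analytical obstacle, because the constancy of $\tau$ and $\sigma$ trivializes the iterative propagation that previously required the lower bound on $\omega_i$.
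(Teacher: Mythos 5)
Your proof is correct and follows essentially the same route as the paper: both identify that the lower bound on $\omega_i$ is needed only for the inductive propagation of \eqref{eq:scalar-step-length-bounds-ri} in Step~2, which trivializes when the step lengths are constant, while the rest of the proof of \cref{lemma:neighborhood-compatible-iterations} is reused unchanged. The only cosmetic difference is that the paper freezes both radii (replacing $\localRhoX[i]$ by $\localRhoX[0]$ and $\delta_{x,i}$ by $\delta_x$), whereas you keep $\localRhoX[i]$ shrinking, freeze only $\delta_x$, and use monotonicity in $\localRhoX[i]\le r_{\max}$; either way the bounds reduce to the base case \eqref{eq:scalar-step-length-bounds-r}.
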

\begin{proof}
    The proof proceeds exactly as that of \cref{lemma:neighborhood-compatible-iterations}, replacing $\localRhoX[i]$ by $\localRhoX[0]$ and $\delta_{x,i}$ by $\delta_x$ everywhere.
	Since $r_{x,i} \le r_{x,0}$ and $\delta_{x,i}\le\delta_{x,0}$, the bound \eqref{eq:neighui-zimi-rxi-inclusion} holds in this case as well, while \eqref{eq:neighui-product-inclusion} reduces to the case $i=0$ that was shown in Step 1.
	Observe also that the only place where we needed the lower bound on $\omega_i$ was in the proof of \eqref{eq:scalar-step-length-bounds-ri} as part of the inductive step of Step~2.
	As the bound and the step lengths remain unchanged between iterations in this case, this is sufficient to conclude the proof, and the lower bound on $\omega_i$ is thus not required.
\end{proof}

Finally, as long as we start close enough to a solution, no additional step length bounds are needed to guarantee \cref{ass:main-for-convergence}\,\ref{item:main-for-convergence:neighbourhood} .
\begin{proposition}
    Under the assumptions of \cref{thm:weak-convergence-nlpdhgm}, \ref{thm:acceleration-nlpdhgm}, or \ref{thm:linear-convergence-nlpdhgm}, suppose that $\metricRhoY>0$.
    Then there exists an $\varepsilon>0$ such that for all $u^0=(x^0,y^0)$ satisfying
    \begin{equation}\label{eq:neighborhood-start-close}
        \sqrt{2\inv\delta(\norm{x^0-\realoptx}^2+\inv\mu\norm{y^0-\realopty}^2)}
        \le\varepsilon
        \quad \text{with}\quad
        \mu\defeq\sigma_1\omega_0/\tau_0,
    \end{equation}
    \cref{ass:main-for-convergence}\,\ref{item:main-for-convergence:neighbourhood} (neighborhood-compatible iterations) holds.
\end{proposition}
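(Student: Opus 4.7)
The approach is to apply \cref{lemma:neighborhood-compatible-iterations} (in the settings of \cref{thm:weak-convergence-nlpdhgm,thm:acceleration-nlpdhgm}) or its constant step-length companion (in the setting of \cref{thm:linear-convergence-nlpdhgm}, where $\omega = 1/(1+2\tilde{\gamma}_G\tau) < 1/\sqrt{1+2\tau\tilde{\gamma}_G}$ so the lower bound on $\omega_i$ in the main lemma fails and we must fall back on the constant-step variant). In either case, \eqref{eq:scalar-rules} has already been verified in the proof of the relevant convergence theorem, and the values of $\tau_0, \sigma_1, \omega_0$ are fixed; the remaining task is to choose $\delta_x, \delta_y > 0$ and a smallness threshold on $\varepsilon$ that make the remaining hypotheses hold.

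First I would fix ``room'' inside the relevant neighborhoods: pick $\rho_x > 0$ with $\B(\realoptx,\rho_x) \subseteq \neighx_G \isect \neighx_K$, and $\rho_y \in (0,\metricRhoY]$ with $\B(\realopty,\rho_y) \subseteq \neighy_{F^*}$. Since $\norm{\freevar}_{\Pnl} \le \norm{\freevar}$, this gives $\B(\realopty,\rho_y) \subseteq \B_{\NL}(\realopty,\metricRhoY) \isect \neighy_{F^*}$, so $\B(\realoptx,\rho_x) \times \B(\realopty,\rho_y) \subseteq \neighu(\metricRhoY)$.

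Next I would exploit the freedom in $\delta_x$. Substituting $\mu = \sigma_1\omega_0/\tau_0$ into the second inequality of \eqref{eq:scalar-step-length-bounds-r} rewrites that bound as $2\delta_y \ge \sigma_1(r_{\max}+\delta_x)(L(r_{\max}+\delta_x)+2R_K)$, whose right-hand side tends to $\sigma_1\delta_x(L\delta_x+2R_K)$ as $r_{\max} \downto 0$. Choose $\delta_x \in (0,\rho_x/2]$ small enough that this limit is below $\rho_y/2$, and then set $\delta_y \defeq \rho_y/2$. This is the only genuinely delicate step: one has to ensure that the lower bound on $\delta_y$ forced by the second step-length inequality does not exceed the upper bound imposed by neighborhood containment, which is possible precisely because $\sigma_1$ is fixed while $\delta_x$ can be made arbitrarily small.

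Finally, setting $c \defeq \sqrt{\mu(1-\delta)\delta/(\kappa-\delta)}$ and $\localRhoY \defeq c\varepsilon$, I would pick $\varepsilon$ small enough that for every $r_{\max} \le \varepsilon$: (a) $r_{\max}+\delta_x \le \rho_x$, i.e.\ $\varepsilon \le \rho_x/2$; (b) $\localRhoY+\delta_y \le \rho_y$, i.e.\ $c\varepsilon \le \rho_y/2$; (c) the first inequality of \eqref{eq:scalar-step-length-bounds-r}, namely $\tau_0(2R_K c\varepsilon + 2L\norm{\Pnl\realopty}\varepsilon) \le \delta_x$, holds (both summands on the left scale linearly with $\varepsilon$ while the right-hand side is fixed); and (d) the continuity-perturbed second inequality still holds for $r_{\max} \le \varepsilon$. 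The definition \eqref{eq:scalar-step-length-rmax} of $r_{\max}$ ensures that \eqref{eq:neighborhood-start-close} with such an $\varepsilon$ yields $r_{\max} \le \varepsilon$. With all the hypotheses of the appropriate variant of \cref{lemma:neighborhood-compatible-iterations} verified, \cref{ass:main-for-convergence}\,\ref{item:main-for-convergence:neighbourhood} follows.
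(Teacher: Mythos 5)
Your proposal is correct and follows essentially the same route as the paper: both invoke \cref{lemma:neighborhood-compatible-iterations} (with its constant-step companion implicitly covering the case \eqref{eq:linear-rate-rules}), set $\localRhoY$ proportional to $\varepsilon$ via $\sqrt{\mu(1-\delta)\delta/(\kappa-\delta)}$, pick suitable $\delta_x,\delta_y$, and shrink $\varepsilon$ so that \eqref{eq:scalar-step-length-bounds-r} and the ball inclusions hold. The only differences are cosmetic parameter choices (you fix $\delta_x$ and take $\delta_y=\rho_y/2$, the paper takes $\delta_x=\sqrt{\varepsilon}$ and $\delta_y=\metricRhoY-\localRhoY$), and you are slightly more explicit about fitting the dual ball inside $\neighy_{F^*}$ and about needing the constant-$\omega$ variant of the lemma for \cref{thm:linear-convergence-nlpdhgm}.
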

\begin{proof}
    For given $\varepsilon>0$, set $\localRhoY= \varepsilon\sqrt{\mu(1-\delta)\delta/(\kappa-\delta)}$ as well as $\delta_x=\sqrt{\varepsilon}$ and $\delta_y=\metricRhoY-\localRhoY$. Then for $\varepsilon>0$ sufficiently small, both $\delta_y>0$ and $\B(\realoptx,r_{\max}+\delta_x)\times\B(\realopty,\localRhoY+\delta_y)\subseteq \neighu(\metricRhoY)$ hold. Furthermore, \eqref{eq:neighborhood-start-close} yields that $r_{\max}\le \varepsilon$ in \cref{lemma:neighborhood-compatible-iterations}. Since
    \[
        \min\left\{
            \frac{\varepsilon^{-1/2}}{2R_K\sqrt{\mu(1-\delta)\delta/(\kappa-\delta)} +2L\norm{\Pnl \realopty}},
            \frac{2(\localRhoY-\varepsilon)\omega_0\varepsilon^{-1/2}}
        {(L(\varepsilon+\sqrt{\varepsilon})+2R_K)(\sqrt{\varepsilon}+1)\mu}\right\} \to\infty
    \]
    for $\varepsilon\to 0$, we can guarantee that \eqref{eq:scalar-step-length-bounds-r} holds for any given $\tau_0>0$ by further reducing $\varepsilon>0$. The claim then follows from \cref{lemma:neighborhood-compatible-iterations}.
\end{proof}

\section{Numerical examples}
\label{sec:examples}

We now illustrate the convergence and the effects of acceleration for a nontrivial example from PDE-constrained optimization. Following \cite{tuomov-pdex2nlpdhgm}, we consider as the nonlinear operator the mapping from a potential coefficient in an elliptic equation to the corresponding solution, i.e., for a Lipschitz domain $\Omega\subset\mathbb{R}^d$, $d\leq 3$ and $X=Y=L^2(\Omega)$, we set $S:x\mapsto z$ for $z$ satisfying
\begin{equation}\label{eq:example_pde}
    \left\{
        \begin{aligned}
            \Delta z + xz &= f\quad \text{on }\Omega,\\
        \partial_\nu z &= 0\quad\text{on }\partial\Omega.
    \end{aligned}
\right.
\end{equation}
Here $f\in L^2(\Omega)$ is given; for our examples below we take $f\equiv 1$. The operator $S$ is uniformly bounded for all $x\geq \varepsilon>0$ almost everywhere as well as completely continuous and twice Fréchet differentiable with uniformly bounded derivatives. Furthermore, for any $h\in X$, the application $\nabla S(x)^*h$ of the adjoint Fréchet derivative can be computed by solving a similar elliptic equation; see \cite[Section 3]{tuomov-pdex2nlpdhgm}. For our numerical examples, we take $\Omega=(-1,1)$ and approximate $S$ by a standard finite element discretization on a uniform mesh with $1000$ elements with piecewise constant $x$ and piecewise linear $z$. We use the MATLAB codes accompanying \cite{tuomov-pdex2nlpdhgm} that can be downloaded from \cite{nlpdhgm-code}.

The first example is the $L^1$ fitting problem
\begin{equation}
    \label{eq:l1fit_problem}
    \min_{x\in L^2(\Omega)} \frac1\alpha \norm{S(x) - z^\delta}_{L^1} + \frac{1}2 \norm{x}_{L^2}^2,
\end{equation}
for some noisy data $z^\delta\in L^2(\Omega)$ and a regularization parameter $\alpha>0$; see \cite[Section 3.1]{tuomov-pdex2nlpdhgm} for details. For the purpose of this example, we take $z^\delta$ as arising from random-valued impulsive noise applied to $z^\dag = S(x^\dag)$ for $x^\dag(t) = 2-|t|$ and $\alpha = 10^{-2}$.
This fits into the framework of problem \eqref{eq:initial-problem} with $F(y) = \frac1\alpha \norm{y}_{L^1}$, $G(x) = \frac12\norm{x}_{L^2}^2$, and $K(x) = S(x)-z^\delta$. (Note that in contrast to \cite{tuomov-pdex2nlpdhgm}, we do not introduce a Moreau--Yosida regularization of $F$ here.) 
Due to the properties of $S$, the gradient of $K$ is uniformly bounded and Lipschitz continuous; cf.~\cite{Kroener:2009a}. Hence, following \cref{rmk:global-convergence}, we can expect that \cref{ass:main-for-convergence}\,\ref{item:main-for-convergence:neighbourhood} holds independent of the initialization. Furthermore, $G$ and $F^*$ are convex and hence \cref{ass:main-for-convergence}\,\ref{item:main-for-convergence:monotone-g-f} is satisfied for $\tilde \gamma_G,\tilde\gamma_{F^*}\geq 0$. This leaves \cref{ass:main-for-convergence}\,\ref{item:main-for-convergence:k-nonlinear}, which amounts to quadratic growth condition of \eqref{eq:l1fit_problem} near the minimizer; cf.~\cref{pr:strongly-convex-primal}.
Similar assumptions are needed for the convergence of Newton-type methods, see, e.g., \cite{Ulbrich2003}. In the context of PDE-constrained optimization, they are generally difficult to prove a priori and have to be assumed.
To set the initial step lengths, we estimate as in \cite{tuomov-pdex2nlpdhgm} the Lipschitz constant $L$ by $\tilde L=\max\{1,\norm{\grad S'(u^0)u^0}/\norm{u^0}\}\approx 1$. We then set  $\tau_0 = (4 \tilde L)^{-1}$ and $\sigma_0 = (2\tilde L)^{-1}$.  The starting points are chosen as $x_0 \equiv 1$ and $y^0\equiv 0$ (which are not close to the expected saddle point).
\pgfplotsset{every axis label/.append style={font=\scriptsize}}
\begin{figure}[t]
    \centering
    \begin{minipage}[t]{0.495\textwidth}
        \centering
        \begin{tikzpicture}

\begin{axis}[%
width=\linewidth,
xmode=log,
xmin=1,
xmax=10000,
xminorticks=true,
ymode=log,
ymin=1e-06,
ymax=1,
yminorticks=true,
axis x line*=bottom,
axis y line*=left,
legend style={legend pos=south west,legend cell align=left,align=left,draw=none,font=\scriptsize}
]

\addplot [color=BuGn-G,line width=1.5pt]
  table[row sep=crcr]{%
1	0.573219859641694\\
2	0.579341151963082\\
3	0.45807023168593\\
4	0.378337722037108\\
5	0.322033516084784\\
6	0.280569399944789\\
9.00000000000001	0.204422504077996\\
12	0.163553018310814\\
14	0.145758358669245\\
16	0.132457377480891\\
18	0.122234612472224\\
20	0.114202082619497\\
22	0.107774926963806\\
25	0.100301716777102\\
28	0.0946696276314342\\
31	0.0903180214307453\\
34	0.0868808469174649\\
38	0.0833042243971803\\
42	0.0805322096108204\\
47.0000000000001	0.0778280680314949\\
54.0000000000001	0.0749474618213937\\
65	0.0716039875747553\\
90.0000000000001	0.0661429525830619\\
104	0.0635762027788029\\
118	0.0611699050196558\\
132	0.0588782840667392\\
147	0.0565283564775529\\
163	0.0541297983963917\\
180	0.0516954702795934\\
198	0.0492395333620786\\
217	0.0467763598356367\\
237	0.044319926973077\\
258	0.0418834867217041\\
280	0.0394793850403779\\
304	0.0370197347247085\\
329	0.0346271811899865\\
356	0.0322245581317418\\
385	0.0298387144435605\\
415	0.0275667445759396\\
448	0.0252792798720413\\
483.000000000001	0.0230746363759022\\
521.000000000001	0.0209151330898618\\
562	0.0188312375136404\\
606	0.0168485534348606\\
655	0.01491374108707\\
709.000000000001	0.0130726483601854\\
776.000000000001	0.0111584491818232\\
866	0.00913233226397841\\
1159	0.00534710215777363\\
1224	0.00488339605181356\\
1292	0.00450558104622966\\
1369	0.00416594929155871\\
1445	0.00390255315664939\\
1502	0.0037540064004685\\
1582	0.00357848442084874\\
1676	0.00340737373490013\\
1732	0.00331794667159707\\
1794	0.0032305685242736\\
1866	0.00315131998087017\\
2008	0.00303944996534395\\
2230	0.00292408619387567\\
2440	0.00286148372185509\\
2892	0.00275958213762829\\
3125	0.00271059347428948\\
3301	0.00267832094360448\\
3631	0.00262122778169694\\
4389	0.00250311460994031\\
4593	0.00247494246383144\\
5201	0.00241400293958555\\
5727	0.00236407583278198\\
6143	0.00230787857986468\\
6482.00000000001	0.00225382852094592\\
6827	0.00219719971891412\\
7451.00000000001	0.00209820424729257\\
8531	0.00195978209557356\\
10000	0.00181787944348522\\
};
\addlegendentry{$\tilde \gamma_G=0$}

\addplot [color=BuGn-K,line width=1.5pt]
  table[row sep=crcr]{%
1	0.573219859641694\\
2	0.567957730854697\\
3	0.44686972387294\\
4	0.360903174904372\\
5	0.298886655467567\\
6	0.252660197082455\\
7	0.217324545769382\\
9.00000000000001	0.16804691376767\\
11	0.136682708720634\\
12	0.125346086023511\\
13	0.116116704542664\\
14	0.108575691694977\\
15	0.10239667616838\\
16	0.0973217112176248\\
17	0.0931445123197818\\
18	0.0896985137605377\\
19	0.0868481782494816\\
20	0.0844825512482459\\
21	0.0825103954102696\\
23	0.0794585623146765\\
25	0.0772342639077489\\
28	0.0747945520602019\\
36	0.0699929080429338\\
39	0.0682294990224124\\
42	0.0663932126652765\\
45	0.0644822560834896\\
48	0.0625042007738767\\
51	0.0604676642586254\\
54.0000000000001	0.0583802380965235\\
57	0.0562491444720714\\
60	0.0540823023626262\\
63.0000000000001	0.0518887346109936\\
66.0000000000001	0.0496782199479855\\
70	0.0467214131772778\\
74	0.0437734006896313\\
78.0000000000001	0.0408528624067936\\
82	0.0379759643264688\\
85.9999999999999	0.0351571924009402\\
90.0000000000001	0.0324099653536396\\
93.9999999999999	0.0297468580653447\\
98.0000000000001	0.0271795703191508\\
102	0.0247187847590729\\
107	0.021808907670622\\
112	0.0190998385062124\\
117	0.0166055673760403\\
122	0.0143350293776646\\
127	0.0122877456289149\\
133	0.0101251441559223\\
140	0.00800505756270352\\
148	0.00607950631632005\\
169	0.0031086655093641\\
174	0.0027419096440557\\
179	0.00247192283871401\\
183	0.0023151073412009\\
186	0.00222753161482737\\
189	0.00216265684068807\\
192	0.00211778783974684\\
195	0.00209014108459769\\
198	0.0020773372042159\\
201	0.00207703638371322\\
204	0.00208709112297294\\
207	0.00210532040455019\\
211	0.00213887387211937\\
219	0.00222103389708384\\
229	0.00232053983130176\\
236	0.00237696207780447\\
242	0.00241289102717155\\
248	0.00243666014843242\\
254	0.00244885511741627\\
259	0.00244986133221825\\
265	0.00244020955085983\\
271	0.00242059255744522\\
279	0.00238455497450094\\
312	0.00223945231515941\\
321	0.00221973771762887\\
329	0.00221154587556687\\
338	0.00221139699300507\\
348	0.00222004886297026\\
387	0.00226198337523513\\
397	0.00225715116785278\\
407	0.00224303421555257\\
417	0.00221995271696916\\
427	0.00218812955738892\\
438	0.00214416745244392\\
451	0.00208343160283719\\
471	0.00198342812115555\\
502	0.00184624353838891\\
521.000000000001	0.00178161264772156\\
541	0.00172810880966122\\
597	0.00160104517049961\\
618	0.00154758064519899\\
644.000000000001	0.00147706764600564\\
725	0.00128718221566489\\
764	0.00121095348735861\\
791.000000000001	0.00115552829157998\\
823	0.0010875328948926\\
1007	0.000784702803989675\\
1100	0.000664829026862442\\
1144	0.000613494766050936\\
1185	0.000565454779672184\\
1252	0.000491900121041399\\
1353	0.000400520593620503\\
1450	0.000333049282125937\\
1532	0.000291915094033921\\
1594	0.000267668598652445\\
1641	0.00025360966496745\\
1694	0.000241173436279668\\
1751	0.000230516725116066\\
1803	0.000222987018176223\\
1880	0.000214219354721458\\
2006	0.000202170287681936\\
2188	0.000188854786909501\\
2266	0.000183337089842525\\
2334	0.000177783537650441\\
2400	0.00017168416719606\\
2462	0.000165169789529987\\
2529	0.000157349229205067\\
2599	0.000148508041986636\\
2676	0.000138221762178926\\
2785	0.000123826314890133\\
3115	9.04718658852568e-05\\
3240	8.20705845075861e-05\\
3341	7.68329549883802e-05\\
3442	7.27521417916629e-05\\
3539	6.97109553236642e-05\\
3801	6.28270677565677e-05\\
3889	6.00267923960742e-05\\
3986	5.66147835690577e-05\\
4116.00000000001	5.18691552373298e-05\\
4589	3.8104245583872e-05\\
4688	3.64224032516815e-05\\
4769	3.54391953003203e-05\\
4849	3.47783659229117e-05\\
4940	3.42745948624697e-05\\
5270.00000000001	3.28105860808242e-05\\
5386	3.20109925871484e-05\\
5549.00000000001	3.07268980187994e-05\\
5710	2.93434307950535e-05\\
6042	2.67107801936862e-05\\
6156	2.61396818460218e-05\\
6269	2.57629964574335e-05\\
6522.00000000001	2.50338709428919e-05\\
6641	2.45357877720801e-05\\
6755	2.39201656466576e-05\\
6869	2.31483079900734e-05\\
6993.00000000001	2.21364803442233e-05\\
7130.00000000001	2.08450876116485e-05\\
7346	1.87031728698705e-05\\
7589	1.67164041248365e-05\\
7825.00000000001	1.52654304294317e-05\\
8627.00000000001	1.15566447691211e-05\\
8813	1.09058558918866e-05\\
8958	1.05523841395477e-05\\
9113.00000000001	1.02876880090004e-05\\
9263	1.01131794795012e-05\\
9673	9.72750048853898e-06\\
9796	9.51699519965747e-06\\
9954	9.16885560969923e-06\\
10000	9.05988125942033e-06\\
};
\addlegendentry{$\tilde\gamma_G=\frac12$}


\addplot [domain=600:6000,color=black, dashed, line width=0.5pt]
{300/x^2};
\addlegendentry{$\mathcal{O}(N^{-2})$}

\end{axis}
\end{tikzpicture}%
        \caption{$L^1$ fitting: $\|x^N-\hat x\|^2_{L^2}$ for different values of $\tilde \gamma_G$}
        \label{fig:l1fit:accel}
    \end{minipage}\hfill
    \begin{minipage}[t]{0.495\textwidth}
        \centering
        \input{l1fit_conv_linear_gammas.tikz}
        \caption{$L^1$ fitting: $\|u^N-\hat u\|^2_{L^2\times L^2}$ (solid) and bounds $(1+2\tilde\gamma_{G}\tau)^{-N}$ (dashed) for strongly convex $F^*$ and different values of $\tilde\gamma_{F^*}$}
        \label{fig:l1fit:linear}
    \end{minipage}
\end{figure}
\Cref{fig:l1fit:accel} shows the convergence behavior $\norm{x^N-\hat x}_{L^2}^2$ of the primal iterates for $N\in\{1,\dots,N_{\max}\}$ for $N_{\max} = 10^{4}$, both without and with acceleration. Since the exact minimizer to \eqref{eq:l1fit_problem} is unavailable, here we take $\hat x:=x^{2N_{\max}}$ as an approximation. As can be seen, the convergence in the first case (corresponding to $\tilde\gamma_G = 0$) is at best $\mathcal{O}(N^{-1})$, while the accelerated algorithm according to \cref{thm:acceleration-nlpdhgm} with $\tilde\gamma_G = \frac12 < \gamma_G$ indeed eventually enters a region where the rate is $\mathcal{O}(N^{-2})$. 
If we replace $F$ by its Moreau--Yosida regularization $F_\gamma$, i.e., replace $F^*$ by $F^*_\gamma := F^* + \frac\gamma2\norm{\cdot}_Y^2$, \cref{thm:linear-convergence-nlpdhgm} is applicable for $\tilde \gamma_{F^*} = \gamma>0$.
As \cref{fig:l1fit:linear} shows for different choices of $\gamma$ and constant step sizes $\tau = \sqrt{{\tilde\gamma_{F^*}}/{\tilde \gamma_G}}\tilde L^{-1}$, $\sigma = ({\tilde \gamma_G}/{\tilde\gamma_{F^*}})\tau$,
the corresponding algorithm leads to linear convergence of the full iterates $\norm{u^N-\hat u}_{L^2\times L^2}^2$ with a rate of $(1+2\tilde \gamma_{G}\tau)^{-N}$ (which depends on $\gamma$ by way of $\tau$).

We also consider the example of optimal control with state constraints mentioned in the Introduction, i.e.,
\begin{equation}\label{eq:state_problem}
    \min_{x\in L^2}  \frac1{2\alpha}\norm{S(x)-z^d}_{L^2}^2 + \frac{1}2 \norm{x}_{L^2}^2
    \quad\text{s.\,t.}\quad [S(x)](t) \leq c  \quad\text{a.\,e. in } \Omega,
\end{equation}
see \cite[Section 3.3]{tuomov-pdex2nlpdhgm} for details.
Here we choose $z^d=S(x^\dag)$ with $x^\dag$ as above, $\alpha = 10^{-3}$, and $c=0.68$ such that the state constraints are violated for $z^d$. 
Again, this fits into the framework of problem \eqref{eq:initial-problem} with $F(y) = \frac1{2\alpha}\norm{y-z^d}_{L^2}^2 + \delta_{(-\infty,c]}(y)$, $G(x) = \frac12\norm{x}_{L^2}^2$, and $K(x) = S(x)$. With the same parameter choice as in the last example, we again eventually observe the convergence rate of $\mathcal{O}(N^{-2})$ for the accelerated algorithm (see \cref{fig:state:accel}) as well as linear convergence if the state constraints are replaced by a Moreau--Yosida regularization (see \cref{fig:state:linear}).
\begin{figure}[t]
    \centering
    \begin{minipage}[t]{0.495\textwidth}
        \centering
        \begin{tikzpicture}

\begin{axis}[%
width=\linewidth,
xmode=log,
xmin=1,
xmax=10000,
xminorticks=true,
ymode=log,
ymin=1e-08,
ymax=1,
yminorticks=true,
axis x line*=bottom,
axis y line*=left,
legend style={legend pos=south west,legend cell align=left,align=left,draw=none,font=\scriptsize}
]

\addplot [color=BuGn-G,line width=1.5pt]
  table[row sep=crcr]{%
1	0.509885342569757\\
2	0.515405627025384\\
3	0.394867842151682\\
4	0.315841937558894\\
5	0.260160132306255\\
6	0.219251326057506\\
7	0.188165284535725\\
8	0.163885351089645\\
9.00000000000001	0.144489313301029\\
11	0.115657420004514\\
13	0.095486253426641\\
23	0.0492065547903402\\
26	0.0430797458584373\\
29	0.0385366221685455\\
32	0.0351007544171103\\
35	0.0324578832648923\\
38	0.0303940103207044\\
41	0.0287596129162199\\
44	0.027447990376543\\
47.0000000000001	0.0263816547780531\\
51	0.0252448944455368\\
55	0.024347139542332\\
60	0.0234610581757181\\
66.0000000000001	0.0226323422027121\\
74	0.0217761296819449\\
89.0000000000001	0.0205693682695727\\
109	0.0193027384656465\\
124	0.0184621431611711\\
139	0.0176756508274749\\
155	0.016883345034978\\
172	0.0160880005367786\\
190	0.0152942326245928\\
209	0.0145071384587435\\
230	0.013694266561019\\
253	0.0128682878120826\\
278	0.0120412177984564\\
305	0.0112241137639609\\
335	0.0104007546318459\\
368	0.00958783877028228\\
406.000000000001	0.00875903510108287\\
451	0.00790759012449992\\
507.000000000001	0.00701589016576563\\
590.000000000001	0.00597160335627852\\
754	0.00459972808769839\\
835	0.00415279642787215\\
910.000000000001	0.00383116767493943\\
984.000000000001	0.00357985260743933\\
1059	0.00337630929671193\\
1138	0.00320425920629549\\
1224	0.00305394763644002\\
1322	0.00291687493396292\\
1439	0.00278643848796572\\
1594	0.00264947945465977\\
1995	0.00238694882261878\\
2273	0.00223933378378733\\
2539	0.00211183435174905\\
2827	0.00198588929839687\\
3158	0.0018552538526998\\
3560	0.00171530138981922\\
4080.00000000001	0.0015611411605909\\
4815	0.00138531750545909\\
6041	0.00116999405911406\\
9329	0.000841461831156727\\
10000	0.000798088391631224\\
};
\addlegendentry{$\tilde \gamma_G=0$}

\addplot [color=BuGn-K,line width=1.5pt]
  table[row sep=crcr]{%
1	0.509885342569757\\
2	0.504098048910511\\
3	0.383747215826097\\
4	0.298561774975139\\
5	0.237261752238444\\
6	0.191695646771497\\
7	0.15697517513138\\
8	0.130027296765462\\
9.00000000000001	0.108825077375103\\
10	0.0919754986885958\\
11	0.078487584490149\\
16	0.0414109123192656\\
17	0.0376565500959638\\
18	0.0346249622807809\\
19	0.0321815969356242\\
20	0.0302164072942735\\
21	0.0286390026870399\\
22	0.0273749276472894\\
23	0.0263627638115519\\
24	0.0255518401455288\\
25	0.0249004001275162\\
26	0.0243741170256474\\
28	0.0235897763221141\\
30	0.0230314914213106\\
35	0.0220152430181166\\
38	0.0214465507652415\\
41	0.0208474485840827\\
44	0.0202153546323628\\
47.0000000000001	0.0195591570765671\\
50	0.0188879983746931\\
53	0.0182078451359842\\
56	0.0175219641750128\\
59.0000000000001	0.0168325343657215\\
63.0000000000001	0.0159118594515668\\
67.0000000000001	0.0149959640631265\\
71	0.0140923995515914\\
75	0.0132079966488961\\
79.0000000000001	0.0123483780748649\\
83	0.0115182136442939\\
88.0000000000001	0.0105280772593773\\
93.0000000000001	0.00959713725093964\\
98.9999999999999	0.00856576122164085\\
105	0.0076340652548312\\
113	0.00655264709857767\\
135	0.00449808267187475\\
141	0.00414583249625756\\
147	0.00386711826068564\\
152	0.0036840637840086\\
157	0.00353963880800761\\
162	0.00342814639681739\\
167	0.00334404361498715\\
172	0.00328205261514094\\
177	0.00323725527444572\\
183	0.00319991514982091\\
191	0.00316701594456281\\
210	0.00310134906930605\\
218	0.00306035572002298\\
225	0.00301365844069982\\
232	0.0029563446880845\\
239	0.00288896195636784\\
247	0.00280126189450006\\
255	0.00270465085969641\\
265	0.00257581978299594\\
276	0.00243003355001378\\
291	0.00223547795705267\\
322	0.00188677051426468\\
352.000000000001	0.00163170110038053\\
386	0.00141559427546042\\
462.000000000001	0.00108287478779593\\
550	0.000840990150832062\\
634	0.000682241000397244\\
730	0.000550168817722156\\
854	0.000429937007271574\\
978.000000000001	0.000344963348568018\\
1106	0.000280381691580653\\
1245	0.000227889786068797\\
1395	0.000185237203045173\\
1555	0.000150742890173064\\
1725	0.000122758873162005\\
1906	9.98831853591333e-05\\
2099	8.10887532952577e-05\\
2303	6.57335014314317e-05\\
2517	5.32338498267646e-05\\
2742	4.30018829528263e-05\\
2977	3.46604341794071e-05\\
3223	2.78322726160915e-05\\
3478	2.22885009209119e-05\\
3743	1.77710720863258e-05\\
4016	1.41194694909676e-05\\
4297	1.1168453016305e-05\\
4587	8.77956594466934e-06\\
4885	6.85809967195816e-06\\
5191.00000000001	5.31826169954988e-06\\
5510	4.07365919087674e-06\\
5847.00000000001	3.06618059135992e-06\\
6191	2.28689721189565e-06\\
6563	1.66062980268385e-06\\
6985.00000000001	1.15276265127652e-06\\
7552	7.11616356988365e-07\\
8165.00000000001	4.41937038059473e-07\\
8480.00000000001	3.59249063879328e-07\\
8728.00000000001	3.13294276891405e-07\\
8940	2.8478533581183e-07\\
9129.00000000001	2.66190376819773e-07\\
9300.00000000001	2.53966605733296e-07\\
9458.00000000001	2.45930945829879e-07\\
9605.00000000001	2.40828455124308e-07\\
9744	2.37786797793711e-07\\
9879.00000000001	2.36237758280132e-07\\
10000	2.35846034288777e-07\\
};
\addlegendentry{$\tilde \gamma_G=\frac12$}


\addplot [domain=300:3000,color=black, dashed, line width=0.5pt]
{100/x^2};
\addlegendentry{$\mathcal{O}(N^{-2})$}

\end{axis}
\end{tikzpicture}%
        \caption{State constraints: $\|x^N-\hat x\|^2_{L^2}$ for different values of $\tilde \gamma_G$}
        \label{fig:state:accel}
    \end{minipage}\hfill
    \begin{minipage}[t]{0.495\textwidth}
        \centering
        \input{state_conv_linear_gammas.tikz}
        \caption{State constraints: $\|u^N-\hat u\|^2_{L^2\times L^2}$ (solid) and bounds $(1+2\tilde\gamma_{G}\tau)^{-N}$ (dashed) for strongly convex $F^*$ and different values of $\tilde\gamma_{F^*}$}
        \label{fig:state:linear}
    \end{minipage}
\end{figure}

\section{Conclusions}

We have developed sufficient conditions on primal and dual step lengths that ensure (in some cases global) convergence and higher convergence rates of the NL-PDHGM method for nonsmooth nonconvex optimization. We have proved that usual acceleration rules give local $O(1/N^2)$ convergence, justifying their use in previously published numerical examples \cite{tuomov-pdex2nlpdhgm}.
Furthermore, we have derived novel linear convergence results based on bounds on the initial step lengths. 
Since our main derivations hold for general operators, one potential extension of the present work is to combine its approach with that of \cite{tuomov-blockcp} to derive block-coordinate methods for nonconvex problems.

\section*{Acknowledgments}

T.~Valkonen and S.~Mazurenko have been supported by the EPSRC First Grant EP/P021298/1, ``PARTIAL Analysis of Relations in Tasks of Inversion for Algorithmic Leverage''.
C.~Clason is supported by the German Science Foundation (DFG) under grant Cl~487/1-1.

\section*{\texorpdfstring{\normalsize}{}A data statement for the EPSRC}

{\color{red}All data and source codes will be publicly deposited when the final accepted version of the manuscript is submitted.}

\appendix

\section{A small improvement of Opial's lemma}

The earliest version of the next lemma is contained in the proof of \cite[Theorem 1]{opial1967weak}.
\begin{lemma}[{\cite[Lemma 6]{browder1967convergence}}]
    \label{lemma:opial}
    On a Hilbert space $X$, let $\hat X \subset X$ be closed and convex, and $\{\thisx\}_{i \in \N} \subset X$. Then $\thisx \weakto \bar{x}$ weakly in $X$ for some $\bar{x} \in \hat X$ if:
    \begin{enumerate}[label=(\roman*)]
        \item\label{item:opial-non-increasing} $i \mapsto \norm{\thisx-\bar{x}}$ is nonincreasing for all $\bar{x} \in \hat X$.
        \item\label{item:opial-limit} All weak limit points of $\{\thisx\}_{i \in \N}$ belong to $\hat X$.
    \end{enumerate}
\end{lemma}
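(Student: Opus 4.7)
The plan is to prove \cref{lemma:opial} by ruling out the possibility of two distinct weak limit points; standard weak compactness of bounded sequences in Hilbert spaces then forces weak convergence of the whole sequence. Boundedness itself is immediate from hypothesis \ref{item:opial-non-increasing}: for any fixed $\bar{x} \in \hat X$, the sequence $\norm{\thisx - \bar{x}}$ is monotone, hence bounded, and the limit $\ell(\bar{x}) \defeq \lim_i \norm{\thisx - \bar{x}}$ exists in $[0, \infty)$.

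Suppose for contradiction that there are two weak limit points, i.e.\ subsequences $x^{i_k} \weakto \bar{x}_1$ and $x^{j_l} \weakto \bar{x}_2$ with $\bar{x}_1 \ne \bar{x}_2$. By \ref{item:opial-limit}, both $\bar{x}_1, \bar{x}_2 \in \hat X$, so $\ell(\bar{x}_1)$ and $\ell(\bar{x}_2)$ both exist by the previous paragraph. The key tool is the three-point (polarization) identity
\[
    \norm{\thisx - \bar{x}_1}^2 = \norm{\thisx - \bar{x}_2}^2 + 2\iprod{\thisx - \bar{x}_2}{\bar{x}_2 - \bar{x}_1} + \norm{\bar{x}_2 - \bar{x}_1}^2.
\]
Passing to the limit along $\{i_k\}$ using weak convergence $x^{i_k} \weakto \bar{x}_1$ yields
$\ell(\bar{x}_1)^2 = \ell(\bar{x}_2)^2 + 2\iprod{\bar{x}_1 - \bar{x}_2}{\bar{x}_2 - \bar{x}_1} + \norm{\bar{x}_2 - \bar{x}_1}^2 = \ell(\bar{x}_2)^2 - \norm{\bar{x}_2 - \bar{x}_1}^2$. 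Passing to the limit along $\{j_l\}$ instead gives $\ell(\bar{x}_1)^2 = \ell(\bar{x}_2)^2 + \norm{\bar{x}_2 - \bar{x}_1}^2$. Subtracting forces $\norm{\bar{x}_2 - \bar{x}_1} = 0$, contradicting $\bar{x}_1 \ne \bar{x}_2$.

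Since $\{\thisx\}_{i \in \N}$ is bounded and every weakly convergent subsequence must, by the above, share a common limit $\bar{x} \in \hat X$, a standard subsequence-of-subsequence argument (every subsequence of $\{\thisx\}$ has a weakly convergent sub-subsequence, all of which converge to the same $\bar{x}$) yields $\thisx \weakto \bar{x}$. There is no serious technical obstacle here; the convexity and closedness of $\hat X$ in the hypothesis are not used directly in the argument above but are natural since condition \ref{item:opial-limit} implicitly requires $\hat X$ to contain weak limits. For the sequel (the \emph{improved} version used in \cref{prop:rateless-varying}), the only modification needed is to replace $\norm{\freevar}$ by the time-varying norms $\norm{\freevar}_{\Test_{i+1}\Precond_{i+1}}$, which is why hypotheses \ref{item:weak-zm-opers} and \ref{item:weak-zm-a-limit} appear there: they guarantee that these norms remain comparable and admit a limiting bilinear form $A_\infty$, so the polarization identity and the limit extraction both still go through.
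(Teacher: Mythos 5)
Your proof of \cref{lemma:opial} itself is correct: it is the classical Opial--Browder argument (boundedness from \ref{item:opial-non-increasing}, existence of $\ell(\bar{x})\defeq\lim_i\norm{\thisx-\bar{x}}$ for every $\bar{x}\in\hat X$, the three-point expansion evaluated along two subsequences to force any two weak limit points to coincide, and a subsequence-of-subsequences extraction). Note that the paper does not prove this statement at all — it is quoted from \cite{browder1967convergence,opial1967weak} — so there is nothing internal to compare against except the proof of the generalized \cref{lemma:opial-improved}, which proceeds quite differently.

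Your closing remark, however, understates what that generalization requires. In \cref{lemma:opial-improved}, hypothesis \ref{item:opial-improved-non-increasing} asks monotonicity of $i\mapsto\norm{\thisx-\hat x}_{A_i}$ for \emph{one} point $\hat x\in\hat X$ only, and $\hat X$ is neither closed nor convex. Your cancellation trick needs the limits $\ell(\bar{x}_1)$ and $\ell(\bar{x}_2)$ to exist at \emph{both} weak limit points, which is exactly what is lost when \ref{item:opial-improved-non-increasing} holds only at a single $\hat x$; simply replacing $\norm{\freevar}$ by $\norm{\freevar}_{\Test_{i+1}\Precond_{i+1}}$ does not repair this. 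This is why the paper's proof of \cref{lemma:opial-improved} introduces the function $p(x)=\liminf_{i\to\infty}\norm{x-\thisx}_{A_i}$, takes near-minimizers $x_\varepsilon^*$ over $\closure\conv\hat X$, and compares $p$ at the convex combination $\bar{x}_{\varepsilon,\lambda}$ with a weak limit point, using conditions \ref{item:opial-improved-a-limit} and $A_i\ge\hat\varepsilon^2 I$ to control the varying metrics. So: the stated lemma is proved correctly, but the claim that the improved version follows by the same argument with cosmetic changes is not accurate.
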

We can improve this result to the following
\begin{lemma}
    \label{lemma:opial-improved}
    Let $X$ be a Hilbert space, $\hat X \subset X$ (not necessarily closed or convex), and $\{\thisx\}_{i \in \N} \subset X$. Also let $A_i \in \linear(X; X)$ be self-adjoint and $A_i \ge \hat{\varepsilon}^2 I$ for some $\hat{\varepsilon}\ne0$ for all $i \in \N$. If the following conditions hold, then $\thisx \weakto \bar{x}$ weakly in $X$ for some $\bar{x} \in \hat X$:
    \begin{enumerate}[label=(\roman*)]
        \item\label{item:opial-improved-non-increasing} $i \mapsto \norm{\thisx-\hat x}_{A_i}$ is nonincreasing for \emph{some} $\hat x \in \hat X$.
        \item\label{item:opial-improved-limit} All weak limit points of $\{\thisx\}_{i \in \N}$ belong to $\hat X$.
        \item\label{item:opial-improved-a-limit} There exists $C$ such that $\norm{A_{i}}\le C^2$ for all $i$, and for any weakly convergent subsequence $x_{i_k}$ there exists $A_\infty  \in \linear(X; X)$ such that $A_{i_k}x \to A_\infty x$ strongly in $X$ for all $x \in X$.
    \end{enumerate}
\end{lemma}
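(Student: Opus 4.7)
I will follow the classical Opial template---boundedness, existence of weak subsequential limits in $\hat X$, uniqueness of the limit---adapted to the fact that the monotonicity in (i) is with respect to the varying inner product induced by $A_i$.

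Boundedness is immediate from (i) together with $\hat\varepsilon^2 I\le A_i$ and the uniform upper bound in (iii): for the distinguished $\hat x\in\hat X$ one has
\[
    \hat\varepsilon^2\|x^i-\hat x\|^2\le\|x^i-\hat x\|^2_{A_i}\le\|x^0-\hat x\|^2_{A_0}\le C^2\|x^0-\hat x\|^2,
\]
so $\{x^i\}_{i\in\N}$ is bounded and, by reflexivity, admits weak subsequential limits, each of which lies in $\hat X$ by (ii). The heart of the argument is therefore uniqueness of such a limit.

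Given two weak limits $\bar x,\tilde x\in\hat X$ along subsequences $\{x^{i_k}\}$ and $\{x^{j_\ell}\}$, a diagonal extraction together with (iii) yields operators $A_\infty,A'_\infty\in\linear(X;X)$ with $A_{i_k}z\to A_\infty z$ and $A_{j_\ell}z\to A'_\infty z$ strongly for every $z\in X$; pointwise passage to the limit in $\iprod{A_{i_k}z}{z}\ge\hat\varepsilon^2\|z\|^2$ preserves the strict bound $A_\infty,A'_\infty\ge\hat\varepsilon^2 I$. Invoking (i) at $\bar x$ and at $\tilde x$ (both lying in $\hat X$; this is how the hypothesis is verified in \cref{prop:rateless-varying}, where (DI) applies to every element of $\inv H(0)$), the monotone sequences $\|x^i-\bar x\|^2_{A_i}$ and $\|x^i-\tilde x\|^2_{A_i}$ have full limits $L_{\bar x},L_{\tilde x}$. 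The self-adjoint polarization
\[
    \|x^i-\tilde x\|^2_{A_i}=\|x^i-\bar x\|^2_{A_i}+2\iprod{x^i-\bar x}{A_i(\bar x-\tilde x)}+\|\bar x-\tilde x\|^2_{A_i}
\]
evaluated along $\{x^{i_k}\}$ has vanishing middle term (because $x^{i_k}-\bar x\weakto 0$ while $A_{i_k}(\bar x-\tilde x)\to A_\infty(\bar x-\tilde x)$ strongly) and last term tending to $\|\bar x-\tilde x\|^2_{A_\infty}$, yielding $L_{\tilde x}-L_{\bar x}=\|\bar x-\tilde x\|^2_{A_\infty}$. The symmetric expansion about $\tilde x$ along $\{x^{j_\ell}\}$ gives $L_{\bar x}-L_{\tilde x}=\|\bar x-\tilde x\|^2_{A'_\infty}$. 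Adding these two relations forces $\|\bar x-\tilde x\|^2_{A_\infty}+\|\bar x-\tilde x\|^2_{A'_\infty}=0$, and the strict positivity of $A_\infty,A'_\infty$ then yields $\bar x=\tilde x$.

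The main obstacle is the bookkeeping across the two varying weights: the classical three-point identity is immediate in a fixed Hilbert structure, whereas here one must exploit the strong pointwise convergence in (iii) precisely to convert the weighted cross term into a quantity that vanishes against $x^{i_k}-\bar x\weakto 0$, and one must invoke the monotonicity (i) at two distinct reference points $\bar x$ and $\tilde x$ to obtain a usable comparison between the limits $L_{\bar x}$ and $L_{\tilde x}$. The condition $\|A_i\|\le C^2$ in (iii) is then what keeps the expanded remainder terms controlled along both subsequences.
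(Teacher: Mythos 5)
There is a genuine gap: your uniqueness argument applies hypothesis \ref{item:opial-improved-non-increasing} at the two weak limit points $\bar x$ and $\tilde x$, i.e.\ you need $i \mapsto \norm{\thisx-\bar x}_{A_i}$ and $i \mapsto \norm{\thisx-\tilde x}_{A_i}$ to be nonincreasing so that the full limits $L_{\bar x}, L_{\tilde x}$ exist. But the lemma only assumes monotonicity at a \emph{single} designated $\hat x \in \hat X$, and this relaxation (together with $\hat X$ being neither closed nor convex) is precisely the content of the improvement over the classical Opial lemma (\cref{lemma:opial}, whose item (i) requires all $\bar x \in \hat X$). Your parenthetical defence -- that in \cref{prop:rateless-varying} the descent inequality holds at every element of $\inv H(0)$ -- does not hold either: that proposition assumes \eqref{eq:convergence-fundamental-condition-iter-h} with $\Penalty_{i+1}\le 0$ only for \emph{some} $\realoptu \in \inv H(0)$, and nothing guarantees it at the weak limit $\bar u$ (which is why the lemma is stated with ``some $\hat x$'' in the first place). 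So while your polarization/cross-term computation is correct as far as it goes (the strong pointwise convergence $A_{i_k}(\bar x-\tilde x)\to A_\infty(\bar x-\tilde x)$ against $x^{i_k}-\bar x \weakto 0$ is handled properly), it proves a statement with a strictly stronger hypothesis, not the lemma as stated.

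The paper's proof gets around exactly this obstruction by a different mechanism: it defines $p(x) \defeq \liminf_{i\to\infty}\norm{x-\thisx}_{A_i}$ on $\closure\conv\hat X$, uses \ref{item:opial-improved-non-increasing} only to ensure $p(\hat x)$ is finite and \ref{item:opial-improved-a-limit} to make $p$ Lipschitz, takes near-minimizers $x^*_\varepsilon$ of $p^2$ over $\closure\conv\hat X$, and then, for any weak limit point $\bar x'$ of $\{\thisx\}$ (which lies in $\hat X$ by \ref{item:opial-improved-limit}), expands $p((1-\lambda)x^*_\varepsilon+\lambda\bar x')^2$ using the weak convergence and the strong pointwise convergence of $A_{i_k}$; approximate minimality then forces $(2\lambda-\lambda^2)\norm{x^*_\varepsilon-\bar x'}^2_{A_\infty}\le\varepsilon^2$, so every weak limit point coincides with the (unique) limit of the $x^*_\varepsilon$, and boundedness from \ref{item:opial-improved-non-increasing} and $A_i\ge\hat\varepsilon^2 I$ finishes the argument. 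To repair your proof you would either have to strengthen hypothesis \ref{item:opial-improved-non-increasing} (defeating the purpose of the lemma) or replace the two-reference-point comparison by a variational argument of this type that only ever measures distances to the one point $\hat x$ and to convex combinations involving the weak limit.
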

\begin{proof}
    For $x \in \closure \conv \hat X$, define $p(x) \defeq \liminf_{i \to \infty} \norm{x-\thisx}_{A_i}$. Clearly \ref{item:opial-improved-non-increasing} yields
    \[
        p(\hat x) = \lim_{i \to \infty} \norm{\hat x-\thisx}_{A_i} \in [0, \infty).
    \]
    Using the triangle inequality and \ref{item:opial-improved-a-limit}, for any $x, x' \in \closure \conv \hat X$ moreover
    \begin{align}
        \label{eq:opial-improved-p-upper}
        0 \le p(x) & \le p(x') + \limsup_{i \to \infty} \norm{x' - x}_{A_i} \le p(x') + C \norm{x' - x}.
    \end{align}
    Choosing $x'=\hat x$ we see from \eqref{eq:opial-improved-p-upper} that $p$ is well-defined and finite.
    It is moreover bounded from below.
   	Given $\varepsilon>0$, we can therefore find $x_\varepsilon^* \in \closure \conv \hat X$ such that $p(x_\varepsilon^*)^2 - \varepsilon^2 \le \inf_{\closure\conv \hat X} p^2$.
   	The norm $\norm{x_\varepsilon^*}$ is bounded from above for small values of $\varepsilon$: for the subsequence $\{x_{i_k}\}$ realizing the limes inferior in $p(x_\varepsilon^*)$,
   	\[
	   	\norm{x_\varepsilon^*}_{A_{i_k}}\le \norm{x_\varepsilon^*-x^{i_k}}_{A_{i_k}}+\norm{x^{i_k}-\hat{x}}_{A_{i_k}}+\norm{\hat{x}}_{A_{i_k}},
   	\]
   	and consequently
   	\[
	   	\hat{\varepsilon}\norm{x_\varepsilon^*}\le
	   	\left(\inf_{\closure\conv \hat X} p\right) + \varepsilon +\norm{x^{0}-\hat{x}}_{A_{0}}+C\norm{\hat{x}},
	\]
   	so there is a subsequence of  $\norm{x_\varepsilon^*}$ weakly converging to some $\bar{x}$ when $\varepsilon \downto 0$.
   	Without loss of generality, by restricting the allowed values of $\varepsilon$, we may assume that $\bar{x}$ is unique.

    Let $\bar{x}'$ be some weak limit of $\{\thisx\}$.
    By \ref{item:opial-improved-limit}, $\bar{x}' \in \hat X$.
    We have to show that $\bar{x}=\bar{x}'$.
    For simplicity of notation, we may assume that the whole sequence $\{\thisx\}$ converges weakly to $\bar{x}'$. By \ref{item:opial-improved-a-limit}, for any $x \in X$, we have
    \begin{equation}
        \label{eq:opial-limit0}
        \lim_{i \to \infty} \iprod{x}{\bar{x}_\varepsilon-\thisx}_{A_i}
        =
        \lim_{i \to \infty} \left(
            \iprod{x}{\bar{x}_\varepsilon-\thisx}_{A_\infty }
            + \iprod{(A_i-A_\infty )x}{\bar{x}_\varepsilon-\thisx}
            \right)
        =
        \iprod{x}{\bar{x}_\varepsilon-\bar{x}'}_{A_\infty }.
    \end{equation}
    Moreover, for any $\lambda \in (0, 1)$, we have $\bar{x}_{\varepsilon,\lambda} \defeq (1-\lambda) \bar{x}_\varepsilon + \lambda \bar{x}' \in \closure\conv \hat X$.
    Now, since $\bar{x}$ is a minimizer of $p$ on $\closure\conv \hat X$, we can estimate
    \begin{equation}
        \label{eq:opial-inequality}
        \begin{aligned}[t]
            p(\bar{x}_\varepsilon)^2 - \varepsilon^2\le
            p(\bar{x}_{\varepsilon,\lambda})^2
            & =
                p(\bar{x}_\varepsilon)^2
                +
                \lim_{i \to \infty}\left(
                \lambda^2\norm{\bar{x}_\varepsilon-\bar{x}'}_{A_i}^2
                -2 \lambda \iprod{\bar{x}_\varepsilon-\bar{x}'}{\bar{x}_\varepsilon-\thisx}_{A_i}
                \right)
            \\
            & =
                p(\bar{x}_\varepsilon)^2
                +(\lambda^2-2\lambda)\norm{\bar{x}_\varepsilon-\bar{x}'}_{A_\infty }^2.
        \end{aligned}
    \end{equation}
    In the second equality we have used \ref{item:opial-improved-a-limit} and \eqref{eq:opial-limit0}. Now, since $\lambda^2 \le 2\lambda$, we obtain
    \[
    	0 \le (2\lambda-\lambda^2)\norm{\bar{x}_\varepsilon-\bar{x}'}^2_{A_\infty} \le \varepsilon^2.
    \]
    This implies $\bar{x}_\varepsilon \to \bar{x}'$ strongly as $\varepsilon \downto 0$. But also $\bar{x}­_\varepsilon \weakto \bar{x}$. Therefore $\bar{x}'=\bar{x}$.

    Finally, by $A_i \ge \hat{\varepsilon} I$ and \ref{item:opial-improved-non-increasing}, the sequence $\{\thisx\}$ is bounded, so any subsequence contains a weakly convergent subsequence. Since the limit is always $\bar{x}$, the whole sequence converges weakly to $\bar{x}$.
\end{proof}

\begin{remark}
    The condition $A_i \ge \hat{\varepsilon}^2 I$ is automatically satisfied if we replace \ref{item:opial-improved-a-limit} by $A_i \to A_\infty$ in the operator topology with $A_\infty \ge 2\hat{\varepsilon}^2 I$.
\end{remark}

\section{Reconstruction of the phase and amplitude of a complex number}
\label{sec:complex}

\def\ii{\mathbf{i}}
\def\angle{\upsilon}
\def\ampl{t}

The purpose of this appendix is to verify \cref{ass:k-nonlinear} for a simplified example related to the MRI reconstruction examples from \cite{tuomov-nlpdhgm}.
Consider
\[
    \min_{\ampl,\angle \in \R} \frac{1}{2}\abs{z-\ampl e^{\ii\angle}}^2 + G_0(\ampl),
    \quad\text{where}\quad
    G_0(\ampl) \defeq
    \begin{cases}
        \alpha \ampl, & \ampl \ge 0, \\
        \infty, & \ampl < 0.
    \end{cases}
\]
for some data $z \in \field{C}$ and a regularization parameter $\alpha>0$. We point out that the following does not depend on the specific structure of $G_0$ for $\ampl \ge 0$, as long as it is convex and increasing.
In terms of real variables, this can be written in general saddle point form as
\begin{equation}
\label{eq:complex-reconstr-h-def}
    K(\ampl, \angle) \defeq
    \begin{pmatrix}
        \ampl \cos \angle - \Re z \\
        \ampl \sin \angle - \Im z
    \end{pmatrix},
    \quad
    G(\ampl,\angle) \defeq G_0(\ampl),
    \quad
    \text{and}
    \quad
    F^*(\lambda, \mu) \defeq \frac{1}{2}(\lambda^2 + \mu^2).
\end{equation}
To simplify the notiation, let $x=(\ampl,\angle)$ and $y=(\lambda,\mu)$.

We now make a case distinction based on the sign of the optimal $\realopt\ampl\geq 0$. We first consider the case $\realopt\ampl>0$. 
\begin{lemma}
    \label{lemma:complex-reconstr-rho>0}
    Let $\realoptu \in \inv H(0)$, where $H(u)$ is defined in \eqref{eq:h} for $K$, $G$, and $F^*$ given by \eqref{eq:complex-reconstr-h-def}, and suppose $\realopt\ampl>0$.
    Let $L > \alpha\realopt\ampl/4$ as well as $\theta>0$ be arbitrary.
    Then \cref{ass:k-nonlinear} holds with $p=2$, i.e., there exists $\varepsilon>0$ such that for all $x, x' \in \B(\realoptx, \varepsilon)$,
    \begin{equation}\label{eq:complex-reconstr-K}
        \iprod{[\kgrad{x'}-\kgrad{\realoptx}]^*\realopty}{x-\realoptx}
        \ge
        \theta\norm{K(\realoptx)-K(x)-\kgrad{x}(\realoptx-x)}^2 - L(\angle-\angle')^2.
    \end{equation}
\end{lemma}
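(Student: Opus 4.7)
The plan is to exploit the first-order optimality conditions to compute $\realopty$ explicitly, then evaluate both sides of \eqref{eq:complex-reconstr-K} using trigonometric identities, and finally reduce the bound to a carefully tuned Young's inequality matching the critical threshold $L > \alpha\realopt\ampl/4$.

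First I would extract from $0 \in H(\realoptu)$ the dual variable. Since $\realopt\ampl > 0$, the normal cone $N_{[0,\infty)}(\realopt\ampl)$ is trivial, so the primal equation in the $\ampl$-coordinate becomes $\alpha + \realopt\lambda \cos\realopt\angle + \realopt\mu \sin\realopt\angle = 0$, while the $\angle$-coordinate (after dividing by $\realopt\ampl$) gives $-\realopt\lambda\sin\realopt\angle + \realopt\mu\cos\realopt\angle = 0$. Solving this linear system yields $\realopty = -\alpha(\cos\realopt\angle,\sin\realopt\angle)$. Then a direct computation using the matrix form of $\kgrad{x}$ and trigonometric addition formulas gives
\[
    \iprod{[\kgrad{x'}-\kgrad{\realoptx}]^*\realopty}{x-\realoptx}
    = \alpha[1-\cos(\angle'-\realopt\angle)](\ampl-\realopt\ampl) + \alpha\ampl'\sin(\angle'-\realopt\angle)(\angle-\realopt\angle).
\]

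Next I would estimate the right-hand side. Direct Taylor expansion of $K$ around $x$ shows that the components of $K(\realoptx)-K(x)-\kgrad{x}(\realoptx-x)$ are each of the form $(\realopt\ampl-\ampl)(\realopt\angle-\angle) \cdot O(1) + (\realopt\angle-\angle)^2 \cdot O(\realopt\ampl)$, so in $\B(\realoptx,\varepsilon)$ we get $\norm{K(\realoptx)-K(x)-\kgrad{x}(\realoptx-x)}^2 \le C\varepsilon^2(\angle-\realopt\angle)^2$ for a constant $C$ depending only on $\realopt\ampl$. Hence the $\theta\norm{\cdots}^2$ term on the right-hand side is negligible compared to $(\angle-\realopt\angle)^2$ for small $\varepsilon$.

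For the lower bound on the left-hand side, I would use $\sin\phi = \phi + O(\phi^3)$ and decompose $\angle'-\realopt\angle = (\angle-\realopt\angle)+(\angle'-\angle)$ to write
\[
    \alpha\ampl'\sin(\angle'-\realopt\angle)(\angle-\realopt\angle) = \alpha\ampl'(\angle-\realopt\angle)^2 + \alpha\ampl'(\angle'-\angle)(\angle-\realopt\angle) + O(\varepsilon^4).
\]
Then Young's inequality $ab \ge -\tfrac{\beta}{2}a^2 - \tfrac{1}{2\beta}b^2$ applied to the cross term with parameter $\beta \in (0,2)$ yields a remaining positive coefficient $\alpha\ampl'(1-\beta/2)$ on $(\angle-\realopt\angle)^2$ and a negative coefficient $\tfrac{\alpha\ampl'}{2\beta}$ on $(\angle'-\angle)^2$. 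The term $\alpha[1-\cos(\angle'-\realopt\angle)](\ampl-\realopt\ampl)$ is bounded below by $-\tfrac{\alpha\varepsilon}{2}(\angle'-\realopt\angle)^2 \ge -\alpha\varepsilon[(\angle-\realopt\angle)^2+(\angle'-\angle)^2]$, and likewise contributes only lower-order corrections.

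The main (and only genuine) obstacle is the parameter choice: I would take $\beta$ just below $2$ so that $\tfrac{\alpha\ampl'}{2\beta}$ approaches $\alpha\ampl'/4$ from above, and exploit that $L > \alpha\realopt\ampl/4$ strictly so that, for $\varepsilon$ small enough (ensuring $\ampl' \in (\realopt\ampl-\varepsilon, \realopt\ampl+\varepsilon)$), we still have $\tfrac{\alpha\ampl'}{2\beta}+\alpha\varepsilon < L$. Simultaneously, the residual positive coefficient $\alpha\ampl'(1-\beta/2) - \alpha\varepsilon > 0$ must dominate $\theta C\varepsilon^2$; since both the $\alpha\varepsilon$ correction and $\theta C\varepsilon^2$ vanish with $\varepsilon$ while $\alpha\ampl'(1-\beta/2)$ stays bounded away from zero for any fixed $\beta<2$, this is achievable by shrinking $\varepsilon$ further. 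The delicate tension between $\beta \to 2$ (dictated by the stated threshold on $L$) and keeping $\varepsilon$ small enough to absorb all higher-order corrections is exactly what makes $L > \alpha\realopt\ampl/4$ sharp for this argument.
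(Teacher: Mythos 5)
Your proposal is correct and follows essentially the same route as the paper's proof: solving the optimality system for $\realopty=-\alpha(\cos\realopt\angle,\sin\realopt\angle)$, reducing the left-hand side via trigonometric identities to $\alpha[1-\cos(\angle'-\realopt\angle)](\ampl-\realopt\ampl)+\alpha\ampl'\sin(\angle'-\realopt\angle)(\angle-\realopt\angle)$, splitting the cross term with a Young/Cauchy parameter pushed toward the threshold $\alpha\realopt\ampl/4<L$, and absorbing the quadratic residual $\norm{K(\realoptx)-K(x)-\kgrad{x}(\realoptx-x)}^2=O(\varepsilon^2)(\angle-\realopt\angle)^2$ by shrinking $\varepsilon$. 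The only differences are cosmetic (big-$O$ remainders in place of exact Lagrange remainders, and a rescaled Young parameter), so the argument is sound as written.
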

\begin{proof}
    The saddle point condition $0 \in H(\realoptu)$ expands as $K(\realopt{\ampl},\realopt{\angle}) \in \subdiff F^*(\realopt\lambda, \realopt\mu)$ and $-\kgradconj{\realopt{\ampl},\realopt{\angle}}\begin{psmallmatrix} \realopt\lambda \\ \realopt{\mu}\end{psmallmatrix} \in \subdiff G(\realopt{\ampl},\realopt{\angle})$.
    Since
    \[
        \kgrad{\ampl, \angle}
        =
        \begin{pmatrix}
            \cos \angle & -\ampl \sin \angle \\
            \sin \angle & \ampl \cos \angle
        \end{pmatrix}
        \quad
        \text{and}
        \quad
        \kgradconj{\ampl, \angle}\begin{pmatrix}\lambda \\ \mu \end{pmatrix}
        =
        \begin{pmatrix}
            \lambda \cos \angle + \mu \sin \angle \\
            \mu \ampl \cos \angle - \lambda\ampl \sin \angle
        \end{pmatrix},
    \]
    the latter further expands as
    \begin{equation}
    \label{eq:complex-foc}
        -(\realopt\lambda \cos \realopt\angle + \realopt\mu \sin \realopt\angle) \in \subdiff G_0(\realopt\ampl),
        \quad\text{and}\quad
        \realopt\mu \realopt\ampl \cos \realopt\angle = \realopt\lambda\realopt\ampl \sin \realopt\angle.
    \end{equation}
    From the second equality, $\realopt\mu \cos \realopt\angle= \realopt\lambda \sin \realopt\angle$.
    Since $\subdiff G_0(\ampl)=\alpha$ for $\ampl>0$, multiplying the first equality by $\cos \realopt\angle$ and $\sin \realopt\angle$ results in
    \begin{equation}
        \label{eq:complex-reconstr-rho>0-lambda-mu}
        \realopt\lambda = -\alpha \cos\realopt\angle,
        \quad
        \text{and}
        \quad
        \realopt\mu = -\alpha \sin\realopt\angle.
    \end{equation}
    We can thus write the left-hand side of \eqref{eq:complex-reconstr-K} as
    \begin{equation*}
        \begin{aligned}[t]
        d_1 &\defeq \iprod{[\kgrad{x'}-\kgrad{\realoptx}]^*\realopty}{x-\realoptx}
        \\
        &=
        \realopt\lambda(\cos\angle'-\cos\realopt\angle)(\ampl-\realopt\ampl)
        +\realopt\mu(\sin\angle'-\sin\realopt\angle)(\ampl-\realopt\ampl)
        \\ \MoveEqLeft[-1]
        +\realopt\lambda(-\ampl'\sin\angle'+\realopt\ampl\sin\realopt\angle)(\angle-\realopt\angle)
        +\realopt\mu(\ampl'\cos\angle'-\realopt\ampl\cos\realopt\angle)(\angle-\realopt\angle)\\
        &= -\alpha\bigl[\cos\realopt\angle(\cos\angle'-\cos\realopt\angle)(\ampl-\realopt\ampl)
            +\sin\realopt\angle(\sin\angle'-\sin\realopt\angle)(\ampl-\realopt\ampl)
            \\ \MoveEqLeft[-1]\qquad
            +\cos\realopt\angle(-\ampl'\sin\angle'+\realopt\ampl\sin\realopt\angle)(\angle-\realopt\angle)
            +\sin\realopt\angle(\ampl'\cos\angle'-\realopt\ampl\cos\realopt\angle)(\angle-\realopt\angle)\bigr]
            \\
            &=
            -\alpha\bigl[\cos\realopt\angle(\cos\angle'-\cos\realopt\angle)(\ampl-\realopt\ampl)
            +\sin\realopt\angle(\sin\angle'-\sin\realopt\angle)(\ampl-\realopt\ampl)
            \\ \MoveEqLeft[-1]\qquad
            +\ampl'[\sin\realopt\angle\cos\angle'-\cos\realopt\angle\sin\angle'](\angle-\realopt\angle)\bigr].
        \end{aligned} 
    \end{equation*}
    Using the standard trigonometric identities
    \begin{subequations}
    \label{eq:trigid}
    \begin{align}
        2\cos\realopt\angle\cos\angle'& =\cos(\realopt\angle-\angle')+\cos(\realopt\angle+\angle'),
        & 2\sin\realopt\angle\sin\angle'&=\cos(\realopt\angle-\angle')-\cos(\realopt\angle+\angle'), \\
        2\sin\realopt\angle\cos\angle'& =\sin(\realopt\angle+\angle')+\sin(\realopt\angle-\angle'),
        & 2\cos\realopt\angle\sin\angle'& =\sin(\realopt\angle+\angle')-\sin(\realopt\angle-\angle'),
    \end{align}
    \end{subequations}
    as well as $\cos^2\realopt\angle+\sin^2\realopt\angle=1$, this becomes
    \begin{equation*}
            d_1 =
            -\alpha\bigl[[\cos(\realopt\angle-\angle')-1](\ampl-\realopt\ampl)
            +\ampl'\sin(\realopt\angle-\angle')(\angle-\realopt\angle)
            \bigr].
    \end{equation*}
    Using Taylor expansion, we obtain for some $\eta_1$ and $\eta_2$ between $0$ and $\realopt\angle-\angle'$ that
    \begin{equation*}
        \begin{split}
            d_1 & =
            \alpha\left[\frac{\cos\eta_1}{2}(\realopt\angle-\angle')^2(\ampl-\realopt\ampl)
            -\ampl'\cos\eta_2(\realopt\angle-\angle')(\angle-\realopt\angle)
            \right]
            \\ &
            =
            \alpha\left[\frac{\cos\eta_1}{2}(\realopt\angle-\angle')^2(\ampl-\realopt\ampl)
            +\ampl'\cos\eta_2(\angle-\realopt\angle)^2 
            -\ampl'\cos\eta_2(\angle-\angle')(\angle-\realopt\angle)
            \right].
        \end{split}
    \end{equation*}
    Note that $\cos \eta_1,\cos\eta_2 \approx 1$ for $\angle'$ close to $\realopt\angle$.
    Using Cauchy's inequality, we have for any $\beta>0$ that
    \begin{equation}
        \label{eq:complex-reconstr-intermediary-d1}
        d_1 \ge
        \alpha\left[\frac{\cos\eta_1}{2}(\realopt\angle-\angle')^2(\ampl-\realopt\ampl)
        +(1-\beta)\ampl'\cos\eta_2(\angle-\realopt\angle)^2 
        -\frac{\cos\eta_2}{4\beta}\ampl'(\angle-\angle')^2
        \right].
    \end{equation}
    We also have $\frac{\cos\eta_1}{2}(\realopt\angle-\angle')^2(\ampl-\realopt\ampl) \ge -\abs{\ampl-\realopt\ampl}[(\angle-\realopt\angle)^2+(\angle-\angle')^2]$ and hence
    \begin{equation*}
        d_1 \ge
        \alpha\left[(1-\beta)\ampl'\cos\eta_2-\abs{\ampl-\realopt\ampl}\right](\angle-\realopt\angle)^2 
        -\alpha\left[\frac{\cos\eta_2}{4\beta}\ampl'+\abs{\ampl-\realopt\ampl}\right](\angle-\angle')^2.
    \end{equation*}
    Choosing $\varepsilon,\delta>0$ small enough, $\beta<1$ large enough, and $\ampl' \in \B(\realopt\ampl, \varepsilon)$, we can thus ensure that
    \begin{equation}
        \label{eq:complex-reconstr-final-d1}
        d_1 \ge
        \delta\theta(\angle-\realopt\angle)^2 
        -L(\angle-\angle')^2.
    \end{equation}

    We now turn to the right-hand side of \eqref{eq:complex-reconstr-K}, which we write as
    \begin{equation*}
        \begin{split}
            D_2 & \defeq
            K(\realoptx)-K(x)-\kgrad{x}(\realoptx-x)
            \\
            &
            =
            \begin{pmatrix}
                \realopt\ampl\cos\realopt\angle-\ampl\cos\angle-\cos\angle(\realopt\ampl-\ampl)+\ampl\sin\angle(\realopt\angle-\angle) \\
                \realopt\ampl\sin\realopt\angle-\ampl\sin\angle-\sin\angle(\realopt\ampl-\ampl)-\ampl\cos\angle(\realopt\angle-\angle)
            \end{pmatrix}
            \\
            &
            =
            \begin{pmatrix}
                \realopt\ampl(\cos\realopt\angle-\cos\angle)+\ampl\sin\angle(\realopt\angle-\angle) \\
                \realopt\ampl(\sin\realopt\angle-\sin\angle)-\ampl\cos\angle(\realopt\angle-\angle)
            \end{pmatrix}.
        \end{split}
    \end{equation*}
    Thus
    \begin{equation*}
        \begin{split}
            \norm{D_2}^2 & =
            2\realopt\ampl^2(1-\cos\realopt\angle\cos\angle-\sin\realopt\angle\sin\angle)
            +\ampl^2(\angle-\realopt\angle)^2
            \\ \MoveEqLeft[-1]
            +2\ampl\realopt\ampl(\realopt\angle-\angle)[
                (\cos\realopt\angle-\cos\angle)\sin\angle
                -(\sin\realopt\angle-\sin\angle)\cos\angle].
        \end{split}
    \end{equation*}
    Using the trigonometric identities \eqref{eq:trigid} and Taylor expansion, it follows that
    \begin{equation*}
        \begin{split}
            \norm{D_2}^2 & =
            2\realopt\ampl^2[1-\cos(\realopt\angle-\angle)]
            +\ampl^2(\angle-\realopt\angle)^2
            -2\ampl\realopt\ampl(\realopt\angle-\angle)\sin(\realopt\angle-\angle)
            \\
            &
            \le
            \realopt\ampl^2(\realopt\angle-\angle)^2
            +\ampl^2(\angle-\realopt\angle)^2
            -2\ampl\realopt\ampl(\realopt\angle-\angle)^2
            +2\ampl\realopt\ampl(\realopt\angle-\angle)^4
            \\
            &
            =
            (\realopt\ampl-\ampl)^2(\realopt\angle-\angle)^2
            +2\ampl\realopt\ampl(\realopt\angle-\angle)^4.
        \end{split}
    \end{equation*}
    By taking $\varepsilon>0$ small enough and $x=(\ampl,\angle) \in \B(\realoptx, \varepsilon)$, we thus obtain for    any $\delta>0$ that $\norm{D_2}^2 \le \delta (\realopt\angle-\angle)^2$.
    We now obtain from \eqref{eq:complex-reconstr-final-d1} that
    \[
        d_1 \ge \theta\norm{D_2}^2 - L(\angle-\angle')^2,
    \]
    which is exactly \eqref{eq:complex-reconstr-K}.
\end{proof}

The case of $\realopt\ampl=0$ is complicated by the fact that $\realopt\angle$ is then no longer unique. We therefore cannot expect convergence of $\thisx=(\this\ampl, \this\angle)$ in the sense studied in this work; we would instead need to consider convergence to the entire solution set; cf.~\cite{tuomov-subreg} for such an abstract approach for convex problems. However, under additional assumptions on the data $z$, we can proceed as before.
The next lemma lays the groundwork for showing that the algorithm actually converges locally to $\realopt\angle=\angle_z$ if $\ampl_z>0$. 
\begin{lemma}
    Suppose $\realopt\ampl=0$ and $\realopt\angle=\angle_z$ for $z=\ampl_z e^{\ii \angle_z}$ with $t_z>0$. Then the conclusions of \cref{lemma:complex-reconstr-rho>0} hold for some $\varepsilon>0$ and any $\theta, L > 0$.
\end{lemma}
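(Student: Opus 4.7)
The first step will be to determine the dual variables $\realopty$. Since $F^*$ is smooth, $\subdiff F^*(\realopty) = \{\realopty\}$, so the saddle-point condition forces $\realopty = K(\realoptx) = (-\ampl_z\cos\angle_z, -\ampl_z\sin\angle_z)$. This has exactly the form of \eqref{eq:complex-reconstr-rho>0-lambda-mu} with $\alpha$ replaced by $\ampl_z$, so most of the derivation of $d_1$ from the proof of \cref{lemma:complex-reconstr-rho>0} transfers after substituting $\realopt\ampl = 0$, $\realopt\angle = \angle_z$, and $\alpha \mapsto \ampl_z$. (Consistency of $\realopty$ with $\subdiff G_0(0) = (-\infty,\alpha]$ yields the side condition $\ampl_z \le \alpha$, which is precisely what characterizes $\realopt\ampl = 0$.) Running that derivation through Taylor expansion and using the algebraic identity $-(\angle_z-\angle')(\angle-\angle_z) = (\angle-\angle_z)^2 - (\angle-\angle')(\angle-\angle_z)$ gives
\[
    d_1 = \ampl_z\bigl[\tfrac{\cos\eta_1}{2}(\angle_z-\angle')^2\ampl + \ampl'\cos\eta_2(\angle-\angle_z)^2 - \ampl'\cos\eta_2(\angle-\angle')(\angle-\angle_z)\bigr]
\]
for $\eta_1,\eta_2$ between $0$ and $\angle_z-\angle'$, while a direct computation of $D_2 \defeq K(\realoptx)-K(x)-\kgrad{x}(\realoptx-x)$ collapses to the clean expression $\norm{D_2}^2 = \ampl^2(\angle-\angle_z)^2$.

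The key structural observation is then an order-of-magnitude gap between $d_1$ and $\theta\norm{D_2}^2$: since $\ampl \ge 0$ (forced by $\subdiff G$) and $\realopt\ampl = 0$, the constraint $x \in \B(\realoptx,\varepsilon)$ yields $0 \le \ampl \le \varepsilon$, so the target $\theta\ampl^2(\angle-\angle_z)^2$ carries one extra factor of $\ampl$ compared with the dominant linear-in-$\ampl$ contribution $\tfrac{\ampl_z\ampl\cos\eta_1}{2}(\angle_z-\angle')^2$ in $d_1$. This slack is what lets us absorb any finite $\theta$ by shrinking $\varepsilon$. Concretely, the plan is to apply balanced Cauchy to the cross term and drop the half of the "good" $(\angle-\angle_z)^2$-contribution that remains, which yields
\[
    d_1 \ge \tfrac{\ampl_z\ampl\cos\eta_1}{2}(\angle_z-\angle')^2 - \tfrac{\ampl_z\ampl'\cos\eta_2}{2}(\angle-\angle')^2,
\]
and then split the target using $(\angle-\angle_z)^2 \le 2(\angle_z-\angle')^2 + 2(\angle-\angle')^2$. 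The required inequality \eqref{eq:complex-reconstr-K} reduces to the coefficient comparison
\[
    \bigl(\tfrac{\ampl_z\ampl\cos\eta_1}{2} - 2\theta\ampl^2\bigr)(\angle_z-\angle')^2 \ge \bigl(2\theta\ampl^2 + \tfrac{\ampl_z\ampl'\cos\eta_2}{2} - L\bigr)(\angle-\angle')^2,
\]
which holds trivially for $\varepsilon$ small enough (depending on $\theta$, $L$, $\ampl_z$): the left-hand coefficient becomes nonnegative while the right-hand coefficient becomes strictly negative.

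The main obstacle is the degenerate sub-case $\ampl' = 0$, in which the entire $\ampl'$-dependent portion of $d_1$ vanishes, so coercivity in $(\angle-\angle_z)^2$ must be recovered purely from the $(\angle_z-\angle')^2$-term. The strategy of \cref{lemma:complex-reconstr-rho>0}, which exploited $\realopt\ampl > 0$ to produce quadratic coercivity directly in $(\angle-\realopt\angle)^2$, fails here. The argument outlined above succeeds precisely because $\theta$ multiplies $\ampl^2$ rather than $\ampl$: the conversion $(\angle-\angle_z)^2 \to (\angle_z-\angle')^2$ costs only an $\mathcal{O}(\ampl)$ factor that vanishes with $\varepsilon$. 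This is also why the statement permits \emph{any} $\theta, L > 0$, in contrast to the constraint $L > \alpha\realopt\ampl/4$ imposed in \cref{lemma:complex-reconstr-rho>0}.
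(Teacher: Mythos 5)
Your proposal is correct and takes essentially the same route as the paper: identify $\realopty=K(\realoptx)$ so that \eqref{eq:complex-reconstr-rho>0-lambda-mu} holds with $\alpha$ replaced by $\ampl_z$, reuse the $d_1$ computation from \cref{lemma:complex-reconstr-rho>0}, observe $\norm{D_2}^2=\ampl^2(\angle-\angle_z)^2$, and exploit that the coercive part of $d_1$ is linear in $\ampl$ while $\theta\norm{D_2}^2$ is quadratic, so shrinking $\varepsilon$ absorbs any $\theta,L>0$. The only cosmetic difference is bookkeeping: the paper converts the $(\angle_z-\angle')^2$ coercivity into $(\angle-\angle_z)^2$ via a Young step with parameter $\zeta$, whereas you split the target via $(\angle-\angle_z)^2\le 2(\angle_z-\angle')^2+2(\angle-\angle')^2$; both arguments (yours and the paper's) also implicitly restrict to $\ampl,\ampl'\ge 0$, so this is not an additional gap.
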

\begin{proof}
    First we deduce from the optimality condition $K(\realopt{\ampl},\realopt{\angle}) \in \subdiff F^*(\realopt\lambda, \realopt\mu)$ that
    \[
        \realopt\lambda=-\Re z=-\ampl_z\cos\angle_z
        \quad\text{and}\quad
        \realopt\mu=-\Im z=-\ampl_z\sin\angle_z,
    \]
    which is analogous to \eqref{eq:complex-reconstr-rho>0-lambda-mu}. Using the assumption that  $\realopt\angle=\angle_z$, we can then proceed as in the proof of \cref{lemma:complex-reconstr-rho>0} to derive the estimate \eqref{eq:complex-reconstr-intermediary-d1} with $\ampl_z$ in place of $\alpha$, which for $\beta=1$ reads
    \begin{equation*}
        d_1 \ge
        \ampl_z\left[\frac{\cos\eta_1}{2}\ampl(\realopt\angle-\angle')^2
        -\frac{\cos\eta_2}{4}\ampl'(\angle-\angle')^2
        \right].
    \end{equation*}
    Now we have for any $\zeta>0$ that
    \[
        (\realopt\angle-\angle')^2 =(\angle-\realopt\angle)^2+(\angle-\angle')^2 -2(\angle-\realopt\angle)(\angle-\angle') \ge (1-\zeta)(\angle-\realopt\angle)^2-(\inv\zeta-1)(\angle-\angle')^2
    \]
    and therefore
    \[
        d_1 \ge
        \frac{\cos\eta_1}{2}(1-\zeta)\ampl_z\ampl (\angle-\realopt\angle)^2
        -\ampl_z\left[\frac{\cos\eta_2}{4}\ampl'
            +\frac{\cos\eta_1}{2}(\inv\zeta-1)\ampl
        \right](\angle-\angle')^2.
    \]
    As in the proof of \cref{lemma:complex-reconstr-rho>0}, we also have that $\norm{D_2}^2 = \ampl^2(\realopt\angle-\angle)^2$.
    Now taking $\zeta\in (0,1)$ and $\varepsilon >0$ small enough, we can force $0=\realopt\ampl\leq \ampl\leq \varepsilon$ sufficiently small that $\frac{1}{2}(1-\zeta)\cos\eta_1\ampl_z\ampl > \theta\ampl^2$ for any given $\theta>0$.
    Likewise, we can guarantee
    \[
        \ampl_z\bigl[\frac{1}{4}\ampl'\cos\eta_2 +\frac{\inv\zeta-1}{2}\ampl\cos\eta_1 \bigr] \le L
    \]
    for any $\zeta > 0$ provided $\ampl',\ampl \ge 0$ are small enough.
    We therefore obtain that $d_1 \ge \theta\norm{D_2}^2 - L(\angle-\angle')^2$ and hence the claim.
\end{proof}

\printbibliography

\end{document}